\documentclass[twoside]{article}

\usepackage{microtype}
\usepackage{graphicx}
\usepackage{subfigure}
\usepackage{booktabs} 
\usepackage{float}

\newcommand{\bound}{_\mathsf{\scriptscriptstyle b}}

\usepackage[accepted]{aistats2021}

\setlength{\pdfpageheight}{11in}
\setlength{\pdfpagewidth}{8.5in}

\usepackage[round]{natbib}

\usepackage{hyperref}      

\hypersetup{ 
	pdfkeywords={},
	pdfborder=0 0 0,
	pdfpagemode=UseNone,
	colorlinks=true,
	linkcolor=blue, 
	citecolor=blue, 
	filecolor=blue, 
	urlcolor=blue, 
	pdfview=FitH,
	pdfauthor={Anonymous},
}

\usepackage{amsmath,amsfonts,amssymb,amsthm,array}
\usepackage{algorithm}
\usepackage{algorithmic}%
\usepackage{bm}
\usepackage{color}
\usepackage{graphicx}

\newcommand{\Exp}{\mathbb{E}}


\newcommand{\R}{\mathbb{R}}

\usepackage{multirow}

\newcommand{\bA}{\mathbf{A}}
\newcommand{\bB}{\mathbf{B}}

\newcommand{\bH}{\mathbf{H}}

\newcommand{\bS}{\mathbf{S}}

\newcommand{\bW}{\mathbf{W}}

\newcommand{\eqdef}{:=}


\newcommand{\cD}{{\cal D}}

\newcommand{\cL}{{\cal L}}

\newcommand{\cX}{{\cal X}}

\newcommand{\mA}{{\bf A}}

\newcommand{\mH}{{\bf H}}

\newcommand{\mS}{{\bf S}}

\usepackage{mdframed} 
\usepackage{thmtools}

\definecolor{shadecolor}{gray}{1.00}
\declaretheoremstyle[
headfont=\normalfont\bfseries,
notefont=\mdseries, notebraces={(}{)},
bodyfont=\normalfont,
postheadspace=0.5em,
spaceabove=1pt,
mdframed={
  skipabove=8pt,
  skipbelow=8pt,
  hidealllines=true,
  backgroundcolor={shadecolor},
  innerleftmargin=4pt,
  innerrightmargin=4pt}
]{shaded}

\declaretheorem[style=shaded,within=section]{definition}
\declaretheorem[style=shaded,sibling=definition]{theorem}

\declaretheorem[style=shaded,sibling=definition]{assumption}
\declaretheorem[style=shaded,sibling=definition]{corollary}

\declaretheorem[style=shaded,sibling=definition]{lemma}
\declaretheorem[style=shaded,sibling=definition]{remark}

\providecommand{\norm}[1]{\left\| #1\right\|}
\newcommand{\dotprod}[1]{\left< #1\right>}

\usepackage{hyperref}

\begin{document}

\runningtitle{Stochastic Polyak Step-size for SGD: An Adaptive Learning Rate for Fast Convergence}

\twocolumn[

\aistatstitle{Stochastic Polyak Step-size for SGD:  \\ An Adaptive Learning Rate for Fast Convergence}

\aistatsauthor{ Nicolas Loizou \And Sharan Vaswani$^\dagger$ \And  Issam Laradji \And Simon Lacoste-Julien}

\aistatsaddress{ Mila and DIRO \\ Universit\'{e} de Montr\'{e}al \And University of Alberta \And McGill, Element AI \And Mila and DIRO \\ Universit\'{e} de Montr\'{e}al \\ Canada CIFAR AI Chair }  ]

\begin{abstract}
We propose a stochastic variant of the classical Polyak step-size~\citep{polyak1987introduction} commonly used in the subgradient method. Although computing the Polyak step-size requires knowledge of the optimal function values, this information is readily available for typical modern machine learning applications. Consequently, the proposed stochastic Polyak step-size (SPS) is an attractive choice for setting the learning rate for stochastic gradient descent (SGD). We provide theoretical convergence guarantees for SGD equipped with SPS in different settings, including strongly convex, convex and non-convex functions. Furthermore, our analysis results in novel convergence guarantees for SGD with a constant step-size. We show that SPS is particularly effective when training over-parameterized models capable of interpolating the training data. In this setting, we prove that SPS enables SGD to converge to the true solution at a fast rate without requiring the knowledge of any problem-dependent constants or additional computational overhead. We experimentally validate our theoretical results via extensive experiments on synthetic and real datasets. We demonstrate the strong performance of SGD with SPS compared to state-of-the-art optimization methods when training over-parameterized models. 
\end{abstract}
\vspace{-4ex}
\section{Introduction}
\label{Intro}
We solve the finite-sum optimization problem:
\begin{equation}
\label{MainProb}
\min_{x\in\R^d} \left[ f(x) = \frac{1}{n} \sum_{i=1}^n f_i(x) \right].
\end{equation}
This problem is prevalent in machine learning tasks where~$x$ corresponds to the model parameters, $f_i(x)$ represents the loss on the training point $i$ and the aim is to minimize the average loss $f(x)$ across training points. We denote $\cX^* \subset \R^d$ to be the set of optimal points $x^*$ of~\eqref{MainProb} and assume that $\cX^*$ is not empty. We use $f^*$ to denote the minimum value of $f$, obtained at a point $x^* \in \cX^*$. For each $i \in \{1,\dots,n\}$, we denote the infimum of function $f_i$ by $f_i^* \eqdef \inf_{x} f_i(x)$. 
Depending on the model under study, the function $f$ can either be strongly-convex, convex, or non-convex.
\subsection{Background and Main Contributions}
Stochastic gradient descent (SGD) \citep{robbins1951stochastic, NemYudin1978, NemYudin1983book, Pegasos, Nemirovski-Juditsky-Lan-Shapiro-2009, HardtRechtSinger-stability_of_SGD}, is the workhorse for training supervised machine learning problems that have the generic form \eqref{MainProb}.  

\paragraph{Step-size selection for SGD.} The main parameter for guaranteeing the convergence of SGD is the \emph{step-size} or the \emph{learning rate}. In recent years, several ways of selecting the step-size have been proposed. \citet{moulines2011non,needell2014stochastic,batchSGDNW16, pmlr-v80-nguyen18c, gower2019sgd} propose a non-asymptotic analysis of SGD with \emph{constant step-size} for convex and strongly convex functions. For non-convex functions, such an analysis can be found in~\citet{ghadimi2013stochastic, bottou2018optimization}. Using a constant step-size for SGD guarantees convergence to a neighbourhoood of the solution. A common technique to guarantee convergence to the exact optimum is to use a \emph{decreasing step-size} \citep{robbins1951stochastic, ghadimi2013stochastic, gower2019sgd,Nemirovski-Juditsky-Lan-Shapiro-2009, karimi2016linear}. More recently, \emph{adaptive} methods \citep{duchi2011adaptive, liu2019variance, kingma2014adam, bengio2015rmsprop, vaswani2019painless, li2018convergence, ward2019adagrad} that adjust the step-size on the fly have become wide-spread and are particularly beneficial when training deep neural networks.

\emph{Contributions:} Inspired by the classical Polyak step-size \citep{polyak1987introduction} commonly used with the deterministic subgradient method \citep{hazan2019revisiting, boyd2003subgradient}, we propose a novel adaptive learning rate for SGD. The proposed step-size is a natural extension of the Polyak step-size to the stochastic setting. We name it \textbf{stochastic Polyak step-size (SPS)}. Although computing SPS requires knowledge of the $f_i^*$; we argue that this information is readily available for modern machine learning applications (for example, $f_i^*=0$ for most standard surrogate losses), making SPS an attractive choice for SGD.

In Section~\ref{SectionConvergenceAnalysis}, we provide theoretical guarantees for the convergence of SGD with SPS in different scenarios including strongly convex, convex and non-convex smooth functions. Although SPS is provably larger than the typically used constant step-size, we guarantee its convergence to a reasonable neighborhood around the optimum. We note that in the modern machine learning tasks that we consider, it is enough to converge to a small neighbourhood and not the exact minimizer to get good generalization performance. We also establish a connection between SPS and the optimal step-size used in sketch and project methods for solving linear systems. Furthermore, in Appendix~\ref{sec:additional-theory}, we provide convergence guarantees for convex non-smooth functions. We also show that by progressively increasing the batch-size for computing the stochastic gradients, SGD with SPS converges to the optimum. \\

\textbf{Technical assumptions and challenges for proving convergence.} Besides smoothness and convexity, several papers \citep{Shamir013, recht2011hogwild, hazan2014beyond, rakhlin2012making} assume that the variance of the stochastic gradient is bounded; that is there exists a $c$ such that $\Exp_{i}\|\nabla f_{i} (x)\|^2 \leq c$. However, in the unconstrained setting, this assumption contradicts the assumption of strong convexity \citep{pmlr-v80-nguyen18c, gower2019sgd}. In another line of work, growth conditions on the stochastic gradients have been used to guarantee convergence. In particular, the weak growth condition has been used in~\citet{Bertsekasneurodynamic,bottou2018optimization,pmlr-v80-nguyen18c}. It states that there exist constants $\rho, \delta$ such that $\Exp_{i}\|\nabla f_{i} (x)\|^2 \leq \rho \, \Exp\|\nabla f(x)\|^2 + \delta$. Its stronger variant (strong growth condition) when $\delta =0$ has been used in several recent papers \citep{schmidt2013fast, Volkan_Bang:2017, vaswani2018fast, vaswani2019painless}. These conditions can be relaxed to the expected smoothness assumption recently used in \citet{gower2019sgd}. 

\emph{Contributions:} Our analysis of SGD with SPS does not require any of these additional assumptions for guaranteeing convergence\footnote{Except for our analysis for non-convex smooth functions where the weak growth condition is used.}. We also note that our theoretical results do not require the finite-sum assumption and can be easily adapted to the streaming setting.

In addition, unlike standard analysis for constant step-size SGD, the use of SPS requires an adaptive step-size that uses the loss and stochastic gradient estimates at an iterate, resulting in correlations. One of the main technical challenges in the proofs is to carefully analyze the SGD iterates taking these correlations into account. Furthermore, since we need to be adaptive to the Lipschitz constant, we can not use the descent lemma (implied by smoothness and SGD
update). This makes the convex proof more challenging than the standard analysis.

\paragraph{Novel analysis for constant SGD.}
In the existing analyses of constant step-size SGD, the neighborhood of convergence depends on the variance of the gradients at the optimum, $z^2 \eqdef \Exp_i \norm{\nabla f_i(x^*)}^2$ which is assumed to be finite.

\emph{Contributions:} The proposed analysis of SGD with SPS gives a novel way to analyze constant step-size SGD. In particular, we prove convergence of \emph{constant step-size SGD} (without SPS), to a neighbourhood that depends on $\sigma^2 \eqdef f(x^*)-\Exp [ f_i^*] <\infty$ (finite optimal objective difference). 

\paragraph{Over-parametrized models and interpolation condition.}
Modern machine learning models such as non-parametric regression or over-parametrized deep neural networks are highly expressive and can fit or \emph{interpolate} the training dataset completely \citep{zhang2016understanding, ma2018power}. In this setting, SGD with constant step-size can been shown to converge to the exact optimum at the deterministic rate \citep{schmidt2013fast, ma2018power, vaswani2018fast, vaswani2019painless, gower2019sgd, berrada2019training}.

\emph{Contributions:} As a corollary of our  theoretical results, we show that SPS is particularly effective under this interpolation setting. Specifically, we prove that SPS enables SGD to converge to the true solution at a fast rate matching the deterministic case. Moreover, SPS does not require the knowledge of any problem-dependent constants or additional computational overhead. 

\paragraph{Experimental Evaluation.}
In Section~\ref{SectionExperiments}, we experimentally validate our theoretical results via experiments on synthetic datasets. We also evaluate the performance of SGD equipped with SPS relative to the state-of-the-art optimization methods when training over-parameterized models for deep matrix factorization, binary classification using kernels and multi-class classification using deep neural networks. For each of these tasks, we demonstrate the superior convergence of the proposed method. The code to reproduce our results can be found at \url{https://github.com/IssamLaradji/sps}.

\section{SGD and the Stochastic Polyak Step-size}
\label{SectionThePolyakStep}

The optimization problem \eqref{MainProb} can be solved using SGD: $$x^{k+1} =  x^k- \gamma_k \nabla f_{i}(x^k),$$
where example $i \in [n]$ is chosen uniformly at random and $\gamma_k>0$ is the step-size in iteration $k$.  
\subsection{The Polyak step-size}
Before explaining the proposed stochastic Polyak step-size, we first present the deterministic variant by Polyak  \citep{polyak1987introduction}. This variant is commonly used in the analysis of deterministic subgradient methods  \citep{boyd2003subgradient, hazan2019revisiting}. 

\paragraph{The deterministic Polyak step-size.} 
For convex functions, the deterministic Polyak step-size at iteration $k$ is the one that minimizes an upper-bound $Q(\gamma)$ on the distance of the iterate $x_{k+1}$ to the optimal solution: $ \|x^{k+1} - x^*\|_2^2 \leq Q(\gamma)$, where
$Q(\gamma) = \|x^k-x^*\|^2-2 \gamma \left[f(x^k)-f^*)\right] + \gamma^2 \| g^k\|^2.$
That is, $$\gamma_k=\text{argmin}_{\gamma} \left[Q(\gamma)\right] =\frac{f(x^k)-f^*}{\|g^k\|^2}.$$ Here $g^k$ denotes a subgradient of function $f$ at point $x^k$ and $f^*$ the optimum function value. For more details and a convergence analysis of the deterministic subgradient method, please check Appendix~\ref{AppendixDeterministicPolyak}.  
Note that the above step-size can be used only when the optimal value $f^*$ is known, however  \citet{boyd2003subgradient} demonstrate that $f^*=0$ for several applications (for example, finding a point in the intersection of convex sets, positive semidefinite matrix completion and solving convex inequalities).

\paragraph{Stochastic Polyak Step-size.} It is clear that using the deterministic Polyak step-size in the update rule of SGD is impractical. It requires the computation of the function value $f$ and its full gradient in each iteration. 

To avoid this, we propose the stochastic Polyak step-size (SPS) for SGD:
\begin{equation}
\label{SPLR}
\text{SPS:} \quad \boxed{\gamma_k =\frac{f_i(x^k)-f_i^*}{c \, \|\nabla f_i(x^k)\|^2}}
\end{equation}
Note that SPS requires the evaluation of only the stochastic gradient $\nabla f_i(x^k)$ and of the function $f_i(x^k)$ at the current iterate (quantities that can be computed in the update rule of SGD without further cost). However, it requires the knowledge of $f_i^*$. An important quantity in the step-size is the  parameter $0<c \in \R$ which can be set theoretically based on the properties of the function under study. For example, for strongly convex functions, one should select $c=1/2$ for optimal convergence.

In addition to SPS, in some of our convergence results we require its bounded variant:
\begin{equation}
\label{SPLRmax}
\text{SPS}_{\max}: \quad \boxed{\gamma_k = \min \left\{ \frac{f_i(x^k)-f_i^*}{c \|\nabla f_i(x^k)\|^2}, \gamma_{\bound} \right\}}
\end{equation}
Here $\gamma_{\bound}>0$ is a bound that restricts SPS from being very large and is essential to ensure convergence to a small neighborhood around the solution. If $\gamma_{\bound}=\infty$ then $\text{SPS}_{\max}$ is equivalent to SPS.

Though SPS and $\text{SPS}_{\max}$ require knowledge of $f_i^*$, this information is often readily available. For machine learning problems using standard \emph{unregularized} surrogate loss functions (e.g. squared loss for regression, logistic loss for classification), $f_i^* = 0$~\citep{bartlett2006convexity}. In the presence of an additional regularization term (e.g. $\ell_2$ regularization), $f_i^*$ can be obtained in closed form for these standard losses.  We emphasize that since $f_i^* = \inf_{x} f_i(x)$, the functions $f_i$ are not required to achieve the minimum. This is important when using loss functions such as the logistic loss for which the infimum is achieved at infinity~\citep{soudry2018implicit}. Furthermore, we note that the deterministic Polyak step-size requires knowledge of $f^*$ which is a much stronger assumption than the knowledge of $f_i^*$.

\paragraph{Closely related work.} We now briefly compare against the recently proposed stochastic variants of the Polyak step-size \citep{rolinek2018l4, oberman2019stochastic, berrada2019training}. In Section~\ref{SectionConvergenceAnalysis}, we present a detailed comparison of the theoretical convergence rates. 

In \citet{rolinek2018l4}, the \textit{L4 algorithm} has been proposed showing that a stochastic variant of the Polyak step for SGD achieves good empirical results for training neural networks. However it has \emph{no theoretical convergence guarantees.} The step-size is very similar to SPS \eqref{SPLR} but each update requires an online estimation of the $f_i^*$ which does not result in robust empirical performance and \emph{requires up to three hyper-parameters.}

\citet{oberman2019stochastic} use a different variant of the stochastic Polyak step-size:  $\gamma_k=\frac{2[f(x^k)-f^*]}{\Exp_i\|\nabla f_i(x^k)\|^2}$. This step-size requires  knowledge of the quantity $\Exp_i\|\nabla f_i(x^k)\|^2$ for all iterates $x^k$ and the evaluation of $f(x^k)$ in each step, making it impractical for finite-sum problems with large~$n$. Moreover, their theoretical results focus only on strongly convex smooth functions.

In the ALI-G algorithm proposed by~\citet{berrada2019training}, the step-size is set as: $\gamma_k=\min \left\{\frac{f_i(x^k)}{\|\nabla f_i(x^k)\|^2+\delta}, \eta \right\}$, where $\delta>0$ is a positive constant. Unlike our setting, their theoretical analysis relies on an $\epsilon$-interpolation condition. Moreover, the values of the parameter $\delta$ and $\eta$ that guarantee convergence heavily depend on the smoothness parameter of the objective $f$, limiting the method's practical applicability. In Section~\ref{SectionConvergenceAnalysis}, we show that as compared to \citet{berrada2019training}, the proposed method results in both better rates and a smaller neighborhood of convergence. For the case of over-parameterized models, our step-size selection guarantees convergence to the exact solution while the step proposed in \citet{berrada2019training} finds only an approximate solution that could be $\delta$ away from the optimum. In Section~\ref{SectionExperiments}, we also experimentally show that $\text{SPS}_{\max}$ results in better convergence than ALI-G.

\subsection{Optimal Objective Difference}
Unlike the typical analysis of SGD that assumes a finite gradient noise $z^2 \eqdef \Exp[\norm{\nabla f_i(x^*)}^2]$, in all our results, we assume a finite optimal objective difference. 
\begin{assumption}[Finite optimal objective difference] 
\begin{align}
\sigma^2 \eqdef \Exp_i[ f_i(x^*)-f_i^*] = f(x^*)-\Exp_i [ f_i^*] < \infty
\label{eq:sigma_def2}    
\end{align}
\end{assumption}
This is a very weak assumption. 
Moreover when \eqref{MainProb} is the training problem of an over-parametrized model such as a deep neural network or involves solving a consistent linear system or classification on linearly separable data, each individual loss function $f_i$ attains its minimum at $x^*$, and thus $f_i(x^*)-f_i^* =0.$ In this \emph{interpolation} setting, it follows that $\sigma=0$. 

\vspace{-2ex}
\section{Convergence Analysis}
\label{SectionConvergenceAnalysis}
\vspace{-2ex}
In this section, we present the main convergence results. For the formal definitions and properties of functions see Appendix~\ref{BasicDefinitions}. Proofs of all key results can be found in the Appendix~\ref{AppendixProofs}. 

\subsection{Upper and Lower Bounds of SPS}
If a function $g$ is $\mu$-strongly convex and $L$-smooth the following bounds hold:
$\frac{1}{2 L} \|\nabla g(x)\|^2 \leq g(x)-\inf_x g(x)\leq \frac{1}{2\mu} \|\nabla g(x)\|^2.$
Using these bounds and by assuming that the functions $f_i$ in problem \eqref{MainProb} are $\mu_i$-strongly convex and $L_i$-smooth, it is straight forward to see that SPS can be lower and upper bounded as follows:
\vspace{-1ex}
\begin{align}
\label{NewBounds}
\frac{1}{2 c L_{\max}} \leq \frac{1}{2 c L_{i}} \leq \gamma_{k}=\frac{f_{i}(x^k)-f_i^*}{c \|\nabla f_{i}(x^k)\|^2} \leq \frac{1}{2 c \mu_{i}},
\end{align}
\vspace{-1ex}
where $L_{\max}=\max \{L_i\}_{i=1}^n$.

\subsection{Sum of convex functions: strongly convex objective}
In this section, we assume that all components $f_i$ are convex functions and that the objective function $f$ is $\mu$-strongly convex.
\begin{theorem}
\label{TheoremStronglyConvex}
Let $f_i$ be $L_i$-smooth convex functions and assume that the objective function $f$ is $\mu$-strongly convex function. Then, SGD with $\text{SPS}_{\max}$ with $c\geq1/2$ converges as: 
\begin{align}
\label{StronglyConvexTheoremResult}
\Exp \|x^{k}-x^*\|^2 
\leq \left(1-\mu \alpha \right)^k  \|x^0-x^*\|^2 + \frac{2\gamma_{\bound} \sigma^2 }{\mu \alpha},\!\!
\end{align}
where $\alpha:=\min \{\frac{1}{2cL_{\max}},\gamma_{\bound}\}$ and $L_{\max}=\max \{L_i\}_{i=1}^n$ is the maximum smoothness constant. The best convergence rate and the tightest neighborhood are obtained for $c=1/2$.
\end{theorem}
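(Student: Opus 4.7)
The plan is to carry out the textbook distance-to-optimum analysis of one SGD step, inserting the defining inequality of $\text{SPS}_{\max}$ at the single algebraic step where it is needed, so that $\mu$-strong convexity of $f$ can close the loop at the end. First I would expand $\|x^{k+1}-x^*\|^2 = \|x^k-x^*\|^2 - 2\gamma_k \langle \nabla f_i(x^k), x^k-x^*\rangle + \gamma_k^2 \|\nabla f_i(x^k)\|^2$ and use convexity of $f_i$ to replace the linear term by $-2\gamma_k(f_i(x^k)-f_i(x^*))$. For the quadratic term, the only input I use is the defining bound $\gamma_k \le (f_i(x^k)-f_i^*)/(c\|\nabla f_i(x^k)\|^2)$ of $\text{SPS}_{\max}$, which when multiplied by $\gamma_k \|\nabla f_i(x^k)\|^2$ yields $\gamma_k^2 \|\nabla f_i(x^k)\|^2 \le \gamma_k (f_i(x^k)-f_i^*)/c$.

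Next, I would split $f_i(x^k)-f_i(x^*) = (f_i(x^k)-f_i^*) - (f_i(x^*)-f_i^*)$ in order to separate the nonnegative drift term $f_i(x^k)-f_i^*$ from the nonnegative ``noise'' term $f_i(x^*)-f_i^*$, whose expectation under random $i$ is exactly $\sigma^2$. Combining the two previous bounds produces $\|x^{k+1}-x^*\|^2 \le \|x^k-x^*\|^2 - \gamma_k(2-1/c)(f_i(x^k)-f_i^*) + 2\gamma_k(f_i(x^*)-f_i^*)$. The hypothesis $c\ge 1/2$ is exactly what makes $(2-1/c)\ge 0$, so that the factors multiplying $\gamma_k$ in both terms have known nonnegative signs. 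I can then substitute the deterministic two-sided envelopes on $\gamma_k$ recorded in \eqref{NewBounds}: the lower bound $\gamma_k \ge \alpha = \min\{1/(2cL_{\max}),\gamma_{\bound}\}$ in the drift term, and the upper bound $\gamma_k \le \gamma_{\bound}$ in the noise term.

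Taking $\Exp_i$ of the resulting inequality, applying $\Exp_i[f_i(x^k)-f_i^*] = f(x^k)-f^*+\sigma^2$ together with $\Exp_i[f_i(x^*)-f_i^*] = \sigma^2$, and finally invoking $\mu$-strong convexity of $f$ in the form $f(x^k)-f^* \ge \tfrac{\mu}{2}\|x^k-x^*\|^2$, leaves a geometric contraction $\Exp_i\|x^{k+1}-x^*\|^2 \le (1-\mu\alpha)\|x^k-x^*\|^2 + 2\gamma_{\bound}\sigma^2$, with constants most favourable at $c=1/2$ because that value maximises $\alpha$ while the noise constant $2\gamma_{\bound}$ is unaffected by $c$. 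A standard unrolling of this recursion (with the tower property) and summation of the geometric series produces the closed-form bound in the statement, the neighbourhood $\frac{2\gamma_{\bound}\sigma^2}{\mu\alpha}$ being the geometric-series limit.

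The main obstacle is the correlation between the adaptive step-size and the stochastic gradient: since $\gamma_k$ is itself a nonlinear function of the sample $i$ that appears in $\nabla f_i(x^k)$, one cannot pass the expectation through as in a constant-step analysis, and because the step-size already adapts to $L_i$ the usual descent lemma is unavailable, forcing the argument to stay within the iterate-distance framework. The remedy is to ``freeze'' $\gamma_k$ by its deterministic envelopes $\alpha \le \gamma_k \le \gamma_{\bound}$ before taking expectation; this substitution is only legitimate once the signs of the coefficients multiplying $\gamma_k$ are pinned down, which is exactly what the hypothesis $c \ge 1/2$ supplies.
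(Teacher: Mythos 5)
Your derivation does not produce the recursion you claim, and the discrepancy is fatal at exactly the value $c=1/2$ that the theorem singles out as optimal. Once you spend convexity of $f_i$ to replace the inner product by $-2\gamma_k\left(f_i(x^k)-f_i(x^*)\right)$ and then absorb the quadratic term via $\gamma_k^2\|\nabla f_i(x^k)\|^2\le\frac{\gamma_k}{c}\left(f_i(x^k)-f_i^*\right)$, the drift $f_i(x^k)-f_i^*$ survives only with coefficient $\gamma_k\left(2-\frac{1}{c}\right)$. Taking $\Exp_i$ and applying strong convexity in the functional form $f(x^k)-f^*\ge\frac{\mu}{2}\|x^k-x^*\|^2$ then yields the contraction factor $1-\mu\alpha\left(1-\frac{1}{2c}\right)$, not $1-\mu\alpha$: at $c=1/2$ this factor equals $1$ (no contraction at all), and for every finite $c>1/2$ it is strictly worse than the rate in \eqref{StronglyConvexTheoremResult} (e.g.\ $1-\frac{\mu\alpha}{2}$ at $c=1$). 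So the stated bound cannot be reached along this route, and your closing remark that $c=1/2$ is the most favourable choice is the opposite of what your own recursion delivers: in your scheme the rate degrades as $c\downarrow 1/2$ and vanishes at the endpoint.

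The missing idea is the order in which convexity and strong convexity are deployed. The paper's proof does not convert the inner product into a function gap of $f_i$; it keeps the group $-\langle x^k-x^*,\nabla f_i(x^k)\rangle+f_i(x^k)-f_i(x^*)$ intact, using convexity of $f_i$ only to certify that this group is nonpositive, then replaces $\gamma_k$ by its lower bound $\alpha$ on this group and by $\gamma_{\bound}$ on the nonnegative noise $f_i(x^*)-f_i^*$, takes $\Exp_i$, and only afterwards invokes $\mu$-strong convexity of $f$ in its gradient form $f(x^k)-f(x^*)-\langle\nabla f(x^k),x^k-x^*\rangle\le-\frac{\mu}{2}\|x^k-x^*\|^2$. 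Because the full factor $2\alpha$ multiplies this Bregman-type gap, the recursion picks up the entire $-\mu\alpha\|x^k-x^*\|^2$, nothing is cancelled against the $\gamma_k^2$ term, and the argument remains valid at $c=1/2$. Your decomposition is essentially the one the paper uses for Theorem~\ref{TheoremConvex} (the merely convex case), where the $\left(2-\frac{1}{c}\right)$ loss is unavoidable and $c>1/2$ strictly is needed; transplanted to the strongly convex setting it weakens the rate and breaks down precisely where the theorem is sharpest.
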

Note that in Theorem~\ref{TheoremStronglyConvex}, we do not make any assumption on the value of the upper bound $\gamma_{\bound}$. However, it is clear that for convergence to a small neighborhood of the solution $x^*$ (unique solution for strongly convex functions) $\gamma_{\bound}$ should not be very large\footnote{Note that neighborhood $\frac{2\gamma_{\bound} \sigma^2} {\mu \alpha}$ has $\gamma_{\bound}$ in the numerator and for the case of large $\gamma_{\bound}$, $\alpha=\frac{1}{2cL_{\max}}$.}.

Another important aspect of Theorem~\ref{TheoremStronglyConvex} is that it provides convergence guarantees without requiring strong assumptions like bounded gradients or growth conditions. We do not use these conditions because SPS provides a natural bound on the norm of the gradients.
In the following corollaries we make additional assumptions to better understand the convergence of SGD with $\text{SPS}_{\max}$.

In our first corollary, we assume that our model is able to interpolate the data (each individual loss function $f_i$ attains its minimum at $x^*$). This condition is satisfied for unregularized least-squares regression on a realizable dataset, or when using the squared-hinge loss on a linearly-separable dataset. The interpolation assumption enables us to guarantee the convergence of SGD with SPS, without an upper-bound on the step-size ($\gamma_{\bound}=\infty$).
\begin{corollary}
\label{noaksoa}
Assume interpolation ($\sigma=0$) and let all assumptions of Theorem~\ref{TheoremStronglyConvex} be satisfied. SGD with SPS with $c=1/2$ converges as: $$\Exp \|x^{k}-x^*\|^2 \leq\left(1-\frac{\mu}{L_{\max}}\right)^k \|x^0-x^*\|^2.$$
\label{interpolationStrongConvex}
\end{corollary}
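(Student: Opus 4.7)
The plan is to specialize the analysis of Theorem~\ref{TheoremStronglyConvex} to the interpolation setting, where $\sigma^2 = 0$ eliminates the need for the cap $\gamma_{\bound}$ and $\text{SPS}_{\max}$ collapses to SPS. I would set $c = 1/2$ and run the same one-step argument directly for SPS. Starting from the standard SGD expansion
\begin{align*}
\|x^{k+1} - x^*\|^2 &= \|x^k - x^*\|^2 - 2\gamma_k \langle \nabla f_i(x^k), x^k - x^*\rangle \\
&\quad + \gamma_k^2 \|\nabla f_i(x^k)\|^2,
\end{align*}
interpolation supplies $\nabla f_i(x^*) = 0$ and $f_i(x^*) = f_i^*$, so smoothness-plus-convexity of $f_i$ (in its co-coercive form) yields the strengthened lower bound
$$\langle \nabla f_i(x^k), x^k - x^*\rangle \geq f_i(x^k) - f_i^* + \frac{1}{2 L_i}\|\nabla f_i(x^k)\|^2.$$

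Substituting this in, together with the defining SPS identity $\gamma_k \|\nabla f_i(x^k)\|^2 = 2(f_i(x^k) - f_i^*)$ valid for $c = 1/2$, the terms $-2\gamma_k(f_i(x^k)-f_i^*)$ and $\gamma_k^2\|\nabla f_i(x^k)\|^2$ cancel exactly, and only the co-coercivity surplus survives:
$$\|x^{k+1} - x^*\|^2 \leq \|x^k - x^*\|^2 - \frac{2}{L_{\max}}(f_i(x^k) - f_i^*).$$
Taking conditional expectation in $i$ and using $\Exp_i[f_i^*] = \Exp_i[f_i(x^*)] = f(x^*)$ (interpolation) rewrites the last term as $(2/L_{\max})(f(x^k) - f(x^*))$. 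Applying $\mu$-strong convexity of $f$ in the form $f(x^k) - f(x^*) \geq (\mu/2)\|x^k - x^*\|^2$ then produces the one-step contraction $\Exp[\|x^{k+1} - x^*\|^2 \mid x^k] \leq (1 - \mu/L_{\max})\|x^k - x^*\|^2$; iterating and taking total expectation delivers the claim.

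The main obstacle is that plain convexity alone is insufficient at $c = 1/2$: the naive lower bound $\langle \nabla f_i(x^k), x^k - x^*\rangle \geq f_i(x^k) - f_i^*$ cancels the cross-term against the $\gamma_k^2\|\nabla f_i(x^k)\|^2$ contribution exactly, yielding only non-expansion. The extra $\|\nabla f_i(x^k)\|^2/(2L_i)$ term produced by co-coercivity, which requires both smoothness of $f_i$ and $\nabla f_i(x^*) = 0$ (i.e.\ interpolation), is precisely what survives the cancellation and furnishes the linear rate $1 - \mu/L_{\max}$. Everything else is routine; in particular the finite-cap machinery of Theorem~\ref{TheoremStronglyConvex} is bypassed because both the noise neighborhood $2\gamma_{\bound}\sigma^2/(\mu\alpha)$ and the role of $\gamma_{\bound}$ in the step-size are rendered moot by $\sigma = 0$.
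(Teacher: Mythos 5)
Your proof is correct, but it takes a different route than the paper. The paper obtains this corollary by simply specializing Theorem~\ref{TheoremStronglyConvex}: under interpolation each term $f_i(x^*)-f_i^*$ vanishes, so the $\gamma_{\bound}$-dependent noise term disappears termwise (making $\gamma_{\bound}=\infty$ harmless), and with $c=1/2$ one has $\alpha=\frac{1}{2cL_{\max}}=\frac{1}{L_{\max}}$. The underlying theorem proof keeps the non-positive convexity gap $f_i(x^k)-f_i(x^*)-\langle \nabla f_i(x^k),x^k-x^*\rangle$, multiplies it by the smoothness-based step-size lower bound $\gamma_k\geq \frac{1}{2cL_{\max}}$, and only after taking expectation applies $\mu$-strong convexity of $f$ through the inner product $\langle \nabla f(x^k),x^k-x^*\rangle$. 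You instead redo the one-step analysis directly in the interpolation regime: since $\sigma=0$ forces $f_i(x^*)=f_i^*$ for every $i$ (hence $\nabla f_i(x^*)=0$ by smoothness), you can invoke the per-sample smoothness-plus-convexity inequality at $x^*$, whose co-coercivity surplus $\frac{1}{2L_i}\|\nabla f_i(x^k)\|^2$ survives the exact cancellation produced by the SPS identity $\gamma_k\|\nabla f_i(x^k)\|^2=2\bigl(f_i(x^k)-f_i^*\bigr)$, and strong convexity enters only through the quadratic-growth bound $f(x^k)-f(x^*)\geq \frac{\mu}{2}\|x^k-x^*\|^2$. Both routes land on the identical recursion $\Exp_i\|x^{k+1}-x^*\|^2\leq \bigl(1-\frac{\mu}{L_{\max}}\bigr)\|x^k-x^*\|^2$. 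The paper's approach buys generality and economy (one bound covers the noisy case, $\text{SPS}_{\max}$, and this corollary, without needing $x^*$ to be a stationary point of each $f_i$ until the specialization); yours buys a self-contained argument that makes the source of the contraction under interpolation transparent and uses strong convexity only via quadratic growth, so it would carry over to objectives satisfying only that weaker condition. The only shared caveat (also present in the paper) is that the step-size is undefined when $\nabla f_i(x^k)=0$, in which case $x^k$ already minimizes $f_i$ and no step is needed.
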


We compare the convergence rate in Corollary~\ref{noaksoa} to that of stochastic line search (SLS) proposed in~\citet{vaswani2019painless}. In similar setting, SLS achieves the slower linear rate $\max \left\{1-\frac{\bar{\mu} }{L_{\max}}, 1-\gamma_{\bound} \bar{\mu} \right\}$, where $\bar{\mu}=\sum_{i=1}^n\mu_i / n $ is the average strong-convexity of the finite sum.
In particular, according to Theorem 1 of~\citet{vaswani2019painless}, the convergence of SLS requires that at least one of the $f_i$'s is $\mu_i$-strongly convex implying that the objective function $f$ is strongly convex. This is a stronger assumption than the one we have in Theorem~\ref{TheoremStronglyConvex}. We also note that $\bar{\mu} \leq \mu$.

In~\citet{berrada2019training}, ALI-G is analyzed under the strong assumption that all functions $f_i$ are $\mu$-strongly convex and $L$-smooth. For detailed comparison of SPS with ALI-G, see Appendix~\ref{AliGcomparison}.

An interesting outcome of Theorem~\ref{TheoremStronglyConvex} is a novel analysis for SGD with a constant step-size. In particular, note that if the bound in $\text{SPS}_{\max}$ is selected to be $\gamma_{\bound} \leq \frac{1}{2 c L_{\max}}$, then using the lower bound of \eqref{NewBounds}, it can be easily shown that our method reduces to SGD with constant step-size $ \gamma_k=\gamma=\gamma_{\bound}\leq\frac{1}{2 c L_{\max}}$. In this case, we obtain the following convergence rate.
\begin{corollary}
Let all assumptions of Theorem~\ref{TheoremStronglyConvex} be satisfied. SGD with $\text{SPS}_{\max}$
with $c=1/2$ and $\gamma_{\bound}\leq\frac{1}{ L_{\max}}$ becomes SGD with constant step-size $\gamma\leq\frac{1}{L_{\max}}$ and converges as:
\begin{align*}
\Exp\|x^{k}-x^*\|^2
\leq \left(1- \mu \gamma \right)^k \|x^0-x^*\|^2 +  \frac{2\sigma^2}{\mu}.
\end{align*}
If we further assume interpolation ($\sigma=0$), the iterates of SGD with constant step-size $\gamma\leq\frac{1}{L_{\max}}$ satisfy: $$\Exp\|x^{k}-x^*\|^2 \leq \left(1- \mu \gamma \right)^k \|x^0-x^*\|^2.$$
\end{corollary}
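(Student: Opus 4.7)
The plan is to derive this corollary as a direct consequence of Theorem~\ref{TheoremStronglyConvex}, with the key observation being that the choice $\gamma_{\bound}\le 1/L_{\max}$ forces the clipping in $\text{SPS}_{\max}$ to be active at every iteration, collapsing the adaptive step to a constant one.

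First I would invoke the lower bound in \eqref{NewBounds}. With $c=1/2$ and $f_i$ being $L_i$-smooth convex, the smoothness inequality gives $\|\nabla f_i(x^k)\|^2\le 2L_i(f_i(x^k)-f_i^*)\le 2L_{\max}(f_i(x^k)-f_i^*)$, so that the unclipped SPS satisfies
\begin{equation*}
\frac{f_i(x^k)-f_i^*}{c\,\|\nabla f_i(x^k)\|^2}\;\ge\;\frac{1}{2cL_{\max}}\;=\;\frac{1}{L_{\max}}\;\ge\;\gamma_{\bound}.
\end{equation*}
Consequently the minimum in the definition \eqref{SPLRmax} of $\text{SPS}_{\max}$ is always attained by $\gamma_{\bound}$, so the algorithm becomes plain SGD with constant step-size $\gamma=\gamma_{\bound}$. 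Note that this holds regardless of whether the stochastic gradient vanishes, since in that case the ratio in SPS is conventionally $+\infty$ and the min still returns $\gamma_{\bound}$.

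Next I would simply plug into the conclusion of Theorem~\ref{TheoremStronglyConvex}. With $c=1/2$ one has $\alpha=\min\{1/L_{\max},\gamma_{\bound}\}=\gamma_{\bound}=\gamma$, so \eqref{StronglyConvexTheoremResult} reads
\begin{equation*}
\Exp\|x^k-x^*\|^2\;\le\;(1-\mu\gamma)^k\|x^0-x^*\|^2+\frac{2\gamma\,\sigma^2}{\mu\gamma}\;=\;(1-\mu\gamma)^k\|x^0-x^*\|^2+\frac{2\sigma^2}{\mu},
\end{equation*}
which is the first claimed inequality. The interpolation statement is then immediate: substituting $\sigma=0$ into the above eliminates the residual term and yields the pure linear contraction $\Exp\|x^k-x^*\|^2\le(1-\mu\gamma)^k\|x^0-x^*\|^2$.

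I do not anticipate any real obstacle: the only non-routine step is recognizing that the lower bound on SPS established in \eqref{NewBounds} exactly matches the threshold $\gamma_{\bound}\le 1/L_{\max}$ chosen here, which is what trivializes the adaptive rule to a constant step. Everything else is bookkeeping on top of Theorem~\ref{TheoremStronglyConvex}.
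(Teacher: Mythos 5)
Your proposal is correct and follows essentially the same route the paper takes: use the lower bound in \eqref{NewBounds} (which only needs $L_i$-smoothness) to see that with $c=1/2$ and $\gamma_{\bound}\leq 1/L_{\max}$ the minimum in $\text{SPS}_{\max}$ is always attained at $\gamma_{\bound}$, so the method is constant step-size SGD, and then substitute $\alpha=\gamma_{\bound}=\gamma$ into \eqref{StronglyConvexTheoremResult} to get the stated bound, with the interpolation case following by setting $\sigma=0$. No gaps.
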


To the best of our knowledge, this is the first result that shows convergence of constant step-size SGD to a neighborhood that depends on the optimal objective difference $\sigma^2$ \eqref{eq:sigma_def2} and not on the variance $z^2=\Exp[\norm{\nabla f_i(x^*)}^2$. Note that if we assume that all function $f_i$ are $\mu$-strongly convex and $L$-smooth functions then the two notions of variance satisfy the following connection: $\frac{1}{2L}z^2\leq \sigma^2 \leq\frac{1}{2\mu}z^2$. 

\subsection{Sum of convex functions}
Here, we derive the convergence rate when all component functions $f_i$ are convex without any strong convexity and obtain the following theorem.
\begin{theorem}
\label{TheoremConvex}
Assume that $f_i$ are convex, $L_i$-smooth functions. SGD with $\text{SPS}_{\max}$ with $c = 1$ converges as: 
\begin{align*}
\Exp \left[f(\bar{x}^K)-f(x^*)\right] 
\leq \frac{\|x^0-x^*\|^2}{\alpha \, K} + \frac{2\sigma^2 \gamma_{\bound}}{\alpha} .
\end{align*}
Here $\alpha=\min \left\{\frac{1}{2cL_{\max}},\gamma_{\bound}\right\}$ and $\bar{x}^K=\frac{1}{K}\sum_{k=0}^{K-1} x^k$. 
\end{theorem}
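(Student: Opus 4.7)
The plan is to imitate the classical SGD convergence argument for convex objectives, but to handle the correlation between the adaptive step $\gamma_k$ and the sampled index $i$ by decoupling via the two-sided bound $\alpha \le \gamma_k \le \gamma_{\bound}$. Fix an iteration and expand
\[
\|x^{k+1}-x^*\|^2 = \|x^k-x^*\|^2 - 2\gamma_k \langle \nabla f_i(x^k), x^k-x^*\rangle + \gamma_k^2\|\nabla f_i(x^k)\|^2.
\]
First I would use convexity of $f_i$ to replace the inner product by $f_i(x^k)-f_i(x^*)$, and then use the defining inequality of $\text{SPS}_{\max}$ with $c=1$, namely $\gamma_k\|\nabla f_i(x^k)\|^2 \le f_i(x^k)-f_i^*$, to rewrite the quadratic term as $\gamma_k[f_i(x^k)-f_i^*]$.

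Next I would split $f_i(x^k)-f_i(x^*) = [f_i(x^k)-f_i^*] - [f_i(x^*)-f_i^*]$ so that both remaining $\gamma_k$-weighted quantities carry a definite sign: $f_i(x^k)-f_i^* \ge 0$ (by definition of $f_i^*$) and $f_i(x^*)-f_i^* \ge 0$. Algebra then gives the clean intermediate bound
\[
\|x^{k+1}-x^*\|^2 \le \|x^k-x^*\|^2 - \gamma_k[f_i(x^k)-f_i^*] + 2\gamma_k[f_i(x^*)-f_i^*].
\]
At this point I apply the lower bound $\gamma_k \ge \alpha = \min\{1/(2cL_{\max}),\gamma_{\bound}\}$ (from the smoothness-based lower bound on $\gamma_k$ in \eqref{NewBounds} combined with the cap $\gamma_{\bound}$) to the negative term, and the upper bound $\gamma_k \le \gamma_{\bound}$ to the positive term. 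Re-splitting $-\alpha[f_i(x^k)-f_i^*] = -\alpha[f_i(x^k)-f_i(x^*)] - \alpha[f_i(x^*)-f_i^*]$ puts $f_i(x^k)-f_i(x^*)$ back into the right place, at the cost of a constant times $f_i(x^*)-f_i^*$.

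Now I would take expectation over $i$ conditional on $x^k$. The term $\Exp_i[f_i(x^k)-f_i(x^*)]$ becomes $f(x^k)-f(x^*)$, and the residual collapses into $\sigma^2$ via Assumption on the finite optimal objective difference, yielding
\[
\Exp\|x^{k+1}-x^*\|^2 \le \Exp\|x^k-x^*\|^2 - \alpha\,\Exp[f(x^k)-f(x^*)] + 2\gamma_{\bound}\sigma^2.
\]
Rearranging, telescoping from $k=0$ to $K-1$, dividing by $\alpha K$, and finally invoking Jensen's inequality via convexity of $f$ to pass from $\frac{1}{K}\sum_k \Exp[f(x^k)-f(x^*)]$ to $\Exp[f(\bar x^K)-f(x^*)]$ delivers the claimed bound.

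The main obstacle is the one present throughout the paper: $\gamma_k$ is a random variable correlated with both $\nabla f_i(x^k)$ and $f_i(x^k)$, so one cannot simply pull it out of $\Exp_i$. The key trick is to apply the $\text{SPS}_{\max}$ definition \emph{before} any expectation (absorbing the quadratic term into a linear function-value term) and only afterwards use the deterministic envelope $\alpha \le \gamma_k \le \gamma_{\bound}$ to decouple. A secondary subtlety is that, unlike the strongly convex case, we cannot invoke a descent lemma, so the $-\alpha[f(x^k)-f(x^*)]$ term must come from convexity plus the SPS identity alone — which is exactly why the split via $f_i^*$ is essential.
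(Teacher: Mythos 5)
Your proposal is correct and follows essentially the same route as the paper's proof: expand the squared distance, use convexity and the $\text{SPS}_{\max}$ inequality $\gamma_k^2\|\nabla f_i(x^k)\|^2 \le \gamma_k[f_i(x^k)-f_i^*]$, split via $f_i^*$ so both $\gamma_k$-weighted terms have definite signs, decouple with $\alpha \le \gamma_k \le \gamma_{\bound}$, then take expectations, telescope, and apply Jensen. The only cosmetic difference is that you fix $c=1$ from the start, whereas the paper carries a general $c>1/2$ (with the factor $2-\tfrac{1}{c}$) and specializes to $c=1$ at the end.
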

Analogous to the strongly-convex case, the size of the neighbourhood is proportional to $\gamma_{\bound}$. When interpolation is satisfied and $\sigma = 0$, we observe that the unbounded variant of SPS with $\gamma_{\bound} = \infty$ converges to the optimum at a $O(1/K)$ rate. This rate is faster than the rates in~\citet{vaswani2019painless, berrada2019training} and we refer the reader to the Appendix for a detailed comparison. As in the strongly-convex case, by setting $\gamma_{\bound} \leq \frac{1}{2 c L_{max}}$, we obtain the convergence rate obtained by constant step-size SGD. 

\subsection{Consistent Linear Systems}
\label{ConsistentSystems}
In~\citet{ASDA}, given the consistent linear system $\bA x=b$, the authors provide a stochastic optimization reformulation of the form \eqref{MainProb} which is equivalent to the linear system in the sense that their solution sets are identical. That is, the set of minimizers of the stochastic optimization problem $\cX^*$ is equal to the set of solutions of the stochastic linear system $\cL \eqdef \{x : \bA x=b \}$.  An interesting property of this stochastic optimization problem is that\footnote{For more details on the stochastic reformulation problem and its properties see Appendix~\ref{LinearSystemsDetails}.}:$ f_{i}(x)-f_i^* \overset{f_i^*=0}{=} f_{i}(x)= \frac{1}{2}\|\nabla f_{i}(x)\|^2 \quad \forall x\in \R^d.$
Using the special structure of the problem, SPS \eqref{SPLR} with $c=1/2$ takes the following form:
$\gamma_k \overset{\eqref{SPLR}}{=} \frac{2\left[f_i(x^k)-f_i^*\right]}{\|\nabla f_i(x^k)\|^2}=1,$
which is the theoretically optimal constant step-size for SGD in this setting \citep{ASDA}. This reduction implies that SPS results in an optimal convergence rate when solving consistent linear systems. We provide the convergence rate for SPS in this setting in Appendix~\ref{AppendixProofs}.

\subsection{Sum of non-convex functions: PL Objective}
We first focus on a special class of non-convex functions that satisfy the Polyak-Lojasiewicz (PL) condition~\citep{polyak1987introduction}. The PL inequality is a generalization of strong-convexity and is satisfied for matrix factorization~\citep{sun2016guaranteed} or when minimizing the logistic loss on a compact set~\citep{karimi2016linear}. In particular, we assume that function $f$ satisfies the PL condition but do not assume convexity of the component functions $f_i$. 

\begin{definition}[Polyak-Lojasiewicz (PL) condition]
We say that a function $f : \R^n \rightarrow \R$ satisfies the PL condition if there exists $\mu>0$ such that, $\forall x \in \R^n$ :
\begin{equation}
\label{PLPL}
\|\nabla f(x)\|^2 \geq 2 \mu (f(x)-f^*)
\end{equation}
\end{definition}

\begin{theorem}
\label{TheoremNonConvexPL}
Assume that function $f$ satisfies the PL condition~\eqref{PLPL}, and let $f$ and $f_i$ be smooth functions. SGD with $\text{SPS}_{\max}$ with $c > \frac{L_{\max}}{4\mu}$ and $\gamma_{\bound} \geq \frac{1}{2 c L_{\max}}$ converges as: 
\begin{align*}
\Exp[f(x^k)-f(x^{*})] \leq \nu^k \, [f(x^0)-f(x^{*})] + \frac{L\sigma^2\gamma_{\bound}}{2 (1-\nu) \, c}
\end{align*}
where $\nu= \gamma_{\bound} \left(\frac{1}{\alpha} - 2 \mu + \frac{L_{\max}}{2c}\right) \in (0,1]$ and $\alpha=\min \left\{\frac{1}{2cL_{\max}},\gamma_{\bound}\right\}$.  
\end{theorem}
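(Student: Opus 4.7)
The plan is to establish a one-step contraction of the form $\Exp[f(x^{k+1}) - f^*\mid x^k] \leq \nu(f(x^k) - f^*) + C\sigma^2$ and then unroll it geometrically. I would start from the $L$-smoothness of $f$ applied to the SGD update $x^{k+1} = x^k - \gamma_k \nabla f_i(x^k)$:
\begin{equation*}
f(x^{k+1}) \leq f(x^k) - \gamma_k \langle \nabla f(x^k), \nabla f_i(x^k)\rangle + \tfrac{L\gamma_k^2}{2}\|\nabla f_i(x^k)\|^2.
\end{equation*}
The key observation is that the lower and upper bounds $\alpha \leq \gamma_k \leq \gamma_{\bound}$ from \eqref{NewBounds} (using $L_i$-smoothness of each $f_i$ for the lower bound and the clipping for the upper bound) let us partially decouple $\gamma_k$ from the random index $i$.

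Next I would apply the polarization identity $2\langle u,v\rangle = \|u\|^2 + \|v\|^2 - \|u-v\|^2$ to split the inner product and combine with the quadratic term, yielding
\begin{equation*}
f(x^{k+1}) \leq f(x^k) - \tfrac{\gamma_k}{2}\|\nabla f(x^k)\|^2 + \tfrac{\gamma_k}{2}\|\nabla f(x^k) - \nabla f_i(x^k)\|^2 + \tfrac{\gamma_k(L\gamma_k - 1)}{2}\|\nabla f_i(x^k)\|^2.
\end{equation*}
I would then bound each of the three terms in turn. For the first, using $\gamma_k \geq \alpha$ and the PL inequality produces $-\alpha\mu(f(x^k) - f^*)$, which is the source of the contraction. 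For the third term, I use the defining Polyak property $\gamma_k \|\nabla f_i(x^k)\|^2 \leq (f_i(x^k) - f_i^*)/c$ (which holds whether or not the min in $\text{SPS}_{\max}$ is active) together with $\gamma_k \leq \gamma_{\bound}$ to get $\leq \tfrac{L\gamma_{\bound}}{2c}(f_i(x^k) - f_i^*)$. For the middle "variance" term, I use $\gamma_k \leq \gamma_{\bound}$ and then invoke $L_{\max}$-smoothness of the individual $f_i$ to bound $\Exp_i\|\nabla f_i(x^k)\|^2 \leq 2L_{\max}\Exp_i[f_i(x^k) - f_i^*]$.

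After taking conditional expectation over $i$ and using $\Exp_i[f_i(x^k) - f_i^*] = f(x^k) - f^* + \sigma^2$, I collect terms into $\Exp[f(x^{k+1}) - f^*\mid x^k] \leq \nu(f(x^k) - f^*) + C\sigma^2$. The assumption $c > L_{\max}/(4\mu)$ is exactly what is needed to ensure that the PL-driven negative contribution $-\alpha\mu(\cdot)$ dominates the positive variance contribution and keeps $\nu \in (0,1]$; the condition $\gamma_{\bound} \geq 1/(2cL_{\max})$ pins down $\alpha = 1/(2cL_{\max})$ so the constants in $\nu$ simplify to the stated form. Finally I take total expectation, iterate the recursion, and bound $\sum_{j=0}^{k-1}\nu^j \leq 1/(1-\nu)$ to produce the theorem's geometric rate and neighborhood.

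The main obstacle is the correlation between $\gamma_k$ and $\nabla f_i(x^k)$: because both depend on $i$, I cannot take expectation first and use $\Exp_i[\nabla f_i(x^k)] = \nabla f(x^k)$ in the usual way. Handling this requires the polarization-identity splitting above, so that the "bias" term $-\gamma_k\|\nabla f(x^k)\|^2$ gets its own worst-case lower bound $\gamma_k \geq \alpha$, the "variance" term gets the upper bound $\gamma_k \leq \gamma_{\bound}$, and the $\gamma_k^2\|\nabla f_i\|^2$ term is controlled via the Polyak defining identity rather than by $\gamma_k$ bounds alone. Balancing these three uses of $\alpha$, $\gamma_{\bound}$, and the Polyak identity is what forces the particular constants appearing in $\nu$ and in the requirement $c > L_{\max}/(4\mu)$.
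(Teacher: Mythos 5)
Your overall scaffolding (one-step recursion, unrolling, the geometric series, the bounds $\alpha\le\gamma_k\le\gamma_{\bound}$, and the Polyak inequality $\gamma_k\|\nabla f_i(x^k)\|^2\le (f_i(x^k)-f_i^*)/c$) is sound, but the way you handle the correlated inner product breaks the argument. After your polarization split, the middle term $\tfrac{\gamma_k}{2}\|\nabla f(x^k)-\nabla f_i(x^k)\|^2$ is nonnegative, and the only available bound is the one you propose: $\gamma_k\le\gamma_{\bound}$ followed by $\Exp_i\|\nabla f_i(x^k)-\nabla f(x^k)\|^2\le\Exp_i\|\nabla f_i(x^k)\|^2\le 2L_{\max}\Exp_i[f_i(x^k)-f_i^*]=2L_{\max}\bigl[(f(x^k)-f^*)+\sigma^2\bigr]$. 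This injects an extra $+\gamma_{\bound}L_{\max}\,(f(x^k)-f^*)$ into the one-step coefficient, whereas your PL term only contributes $-\alpha\mu\,(f(x^k)-f^*)$ with $\alpha\le\gamma_{\bound}$ and $\mu\le L\le L_{\max}$. Collecting your three bounds gives a coefficient
\begin{equation*}
1-\alpha\mu+\gamma_{\bound}L_{\max}+\tfrac{L\gamma_{\bound}}{2c}\;\ge\;1-\gamma_{\bound}\mu+\gamma_{\bound}L_{\max}+\tfrac{L\gamma_{\bound}}{2c}\;\ge\;1+\tfrac{L\gamma_{\bound}}{2c}\;>\;1,
\end{equation*}
so no contraction is obtained for any admissible $c$ and $\gamma_{\bound}\ge\frac{1}{2cL_{\max}}$, and you cannot recover the stated $\nu=\gamma_{\bound}\bigl(\frac{1}{\alpha}-2\mu+\frac{L_{\max}}{2c}\bigr)$; the assumption $c>\frac{L_{\max}}{4\mu}$ does not rescue this, contrary to your final claim. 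Even keeping the negative piece $-\tfrac{\gamma_k}{2}\|\nabla f_i(x^k)\|^2$ from the polarization (rather than discarding it) only improves the coefficient to $1-\gamma_{\bound}\mu+(\gamma_{\bound}-\alpha)L_{\max}+\tfrac{L\gamma_{\bound}}{2c}$, which still fails to drop below $1$ except when $\gamma_{\bound}$ sits essentially at the lower boundary $\frac{1}{2cL_{\max}}$ and $c>\frac{L}{2\mu}$, so it does not prove the theorem as stated.

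The paper sidesteps the correlation without paying any gradient-variance term: it divides the smoothness inequality by $\gamma_k$ \emph{before} taking expectations. Then the inner product carries no $\gamma_k$, so $\Exp_i\bigl[-\langle\nabla f(x^k),\nabla f_i(x^k)\rangle\bigr]=-\|\nabla f(x^k)\|^2$ exactly and PL is applied at full strength $-2\mu\,(f(x^k)-f^*)$; the term $\tfrac{L\gamma_k}{2}\|\nabla f_i(x^k)\|^2$ is controlled by the Polyak inequality alone, producing only the $\tfrac{L}{2c}\sigma^2$ noise; and the bounds $\alpha\le\gamma_k\le\gamma_{\bound}$ are used only on the nonnegative ratios $\frac{f(x^k)-f^*}{\gamma_k}$ and $\frac{f(x^{k+1})-f^*}{\gamma_k}$, which is exactly what produces $\nu=\gamma_{\bound}\bigl(\frac{1}{\alpha}-2\mu+\frac{L_{\max}}{2c}\bigr)$ after the case analysis on $\alpha$. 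Your polarization device is in fact the paper's technique for the general non-convex Theorem~\ref{TheoremNonConvex}, where its price is the growth condition \eqref{SGC} and only a stationarity guarantee; for the PL result it gives away too much.
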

Under the interpolation setting, $\sigma = 0$, and SPS$_{\max}$ converges to the optimal solution at a linear rate. If $\gamma_{\bound}\leq \min \left\{\frac{1}{2cL_{\max}},\frac{2c}{4 \mu c- L_{\max}}\right\}$ using the lower bound in \eqref{NewBounds}, the analyzed method is SGD with constant step-size and we obtain the following corollary. 
\begin{corollary}
\label{CorollaryConstantStepPL}
Assume that $f$ satisfies the PL condition~\eqref{PLPL}, and let $f$ and $f_i$ be smooth functions. SGD with constant step-size $\gamma_k=\gamma \leq\frac{\mu}{L_{\max}^2}$ converges as: 
$$
\Exp[f(x^k)-f(x^{*})] \leq \nu^k \, [f(x^0)-f(x^{*})] + \frac{L\sigma^2\gamma}{2 (1-\nu) \, c}.
$$
\end{corollary}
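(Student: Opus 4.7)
The plan is to derive this corollary directly by specializing Theorem~\ref{TheoremNonConvexPL}. The idea is to choose $\gamma_{\bound}$ and $c$ so that the adaptive step $\text{SPS}_{\max}$ collapses to the prescribed constant step-size $\gamma$, and then read off the rate.

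First, I would verify that the lower bound in \eqref{NewBounds}, $\gamma_k \ge \tfrac{1}{2cL_{\max}}$, remains valid in the present setting even though the PL assumption on $f$ does not give per-component strong convexity of the $f_i$. This only needs $L_i$-smoothness of $f_i$ and the definition $f_i^* = \inf_x f_i(x)$: a standard consequence of smoothness is that any $L$-smooth function $g$ with infimum $g^*$ satisfies $\|\nabla g(x)\|^2 \le 2L(g(x)-g^*)$, obtained by minimizing the quadratic upper model of $g$ and using $g^* \le g(x-\tfrac{1}{L}\nabla g(x))$. Applied to each $f_i$, this yields exactly the lower bound $\gamma_k^{\text{SPS}} \ge \tfrac{1}{2cL_{\max}}$.

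Second, I would use this lower bound to show that whenever $\gamma_{\bound} \le \tfrac{1}{2cL_{\max}}$, the truncation in $\text{SPS}_{\max} = \min\{\text{SPS},\gamma_{\bound}\}$ always activates, so the iterates coincide exactly with those of constant step-size SGD with $\gamma = \gamma_{\bound}$. The hypothesis $\gamma \le \mu/L_{\max}^2$ is precisely what allows such a pairing: one can pick $c$ such that $\tfrac{1}{2cL_{\max}} \ge \gamma$ while simultaneously $c > L_{\max}/(4\mu)$ (for instance $c = L_{\max}/(2\mu)$ achieves $\tfrac{1}{2cL_{\max}} = \mu/L_{\max}^2$, and any larger $c$ makes the bound even looser). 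These are the two conditions required by Theorem~\ref{TheoremNonConvexPL}.

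Third, I would plug into the conclusion of Theorem~\ref{TheoremNonConvexPL}. Since $\gamma_{\bound} = \gamma \le \tfrac{1}{2cL_{\max}}$, the minimum in the definition of $\alpha$ is attained at $\gamma_{\bound}$, so $\alpha = \gamma$. The contraction factor simplifies to
\begin{equation*}
\nu \;=\; \gamma\Bigl(\tfrac{1}{\gamma} - 2\mu + \tfrac{L_{\max}}{2c}\Bigr) \;=\; 1 - \gamma\Bigl(2\mu - \tfrac{L_{\max}}{2c}\Bigr),
\end{equation*}
which lies in $(0,1)$ under the conditions already imposed on $c$ and $\gamma$. Substituting into the bound of Theorem~\ref{TheoremNonConvexPL} gives exactly the claimed inequality.

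The only mild obstacle is bookkeeping: making sure the interplay between $\gamma$, $c$, and $\gamma_{\bound}$ is tight enough that both the reduction (constant step-size) and $\nu \in (0,1)$ hold simultaneously. Conceptually there is nothing new to prove beyond what is already in Theorem~\ref{TheoremNonConvexPL}; the only genuinely new ingredient is the observation that the lower bound of \eqref{NewBounds} requires only smoothness, which is what allows the reduction to constant step-size SGD in the non-convex PL setting where the $f_i$ may fail to be convex.
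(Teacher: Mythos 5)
Your proposal is correct and follows essentially the same route as the paper: the paper obtains the corollary by taking case (ii) of the proof of Theorem~\ref{TheoremNonConvexPL} (where $\gamma_{\bound}\leq\frac{1}{2cL_{\max}}$ forces $\text{SPS}_{\max}$ to the constant step via the lower bound of \eqref{NewBounds}) with $c=\frac{L_{\max}}{2\mu}$, giving $\gamma\leq\frac{\mu}{L_{\max}^2}$ and $\nu=1-\mu\gamma$, exactly as you do. Your explicit remark that the lower bound in \eqref{NewBounds} needs only $L_i$-smoothness (not convexity or strong convexity of the $f_i$) is a correct and worthwhile clarification that the paper leaves implicit in this non-convex setting.
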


To the best of our knowledge this is the first result for the convergence of SGD for PL functions without assuming bounded gradient or bounded variance or interpolation (for more details see results in \citet{karimi2016linear} and discussion in \citet{gower2019sgd}). In the interpolation case, we obtain linear convergence to the optimum with a constant step-size equal to that used in~\citet{vaswani2018fast,  lei2019stochastic}.

\subsection{General Non-Convex Functions}
In this section, we assume a common condition used to prove convergence of SGD in the non-convex setting~\citep{bottou2018optimization}. 
\begin{equation}
\label{SGC}
\Exp[\|\nabla f_i(x)\|^2]\leq \rho \|\nabla f(x)\|^2 +\delta
\end{equation}
where $\rho, \delta>0$ constants.
\begin{theorem}
\label{TheoremNonConvex}
Let $f$ and $f_i$ be smooth functions and assume that there exist $\rho, \delta>0$ such that the condition \eqref{SGC} is satisfied. SGD with SPS$_{\max}$ with $c > \frac{\rho L}{4 L_{\max}}$ and $\gamma_{\bound} < \max \left\{\frac{2}{L\rho}, \bar{\gamma_{\bound}}\right\}$ converges as:
\begin{align*}
\min_{k \in [K]}\Exp \|\nabla f(x^k)\|^2 \leq  \frac{2}{\zeta K} \left(f(x^0)  -f(x^*)\right) \\ + \frac{\left(\gamma_{\bound}-\alpha+L \gamma_{\bound}^2\right)\delta}{\zeta},
\end{align*}
where $\alpha=\min\left\{\frac{1}{2cL_{\max}},\gamma_{\bound}\right\}, \quad \zeta=\left(\gamma_{\bound}+\alpha\right)-\rho\left(\gamma_{\bound}-\alpha+L \gamma_{\bound}^2\right)\quad$ and $$\bar{\gamma_{\bound}}\eqdef \frac{-(\rho-1)+\sqrt{(\rho-1)^2+\dfrac{4L\rho(\rho+1)}{2cL_{\max}}}}{2L\rho}.$$
\end{theorem}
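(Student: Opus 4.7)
The plan is to prove the one-step descent inequality
\[
\Exp[f(x^{k+1})] - f(x^k) \leq -\frac{\zeta}{2}\|\nabla f(x^k)\|^2 + \frac{\gamma_{\bound}-\alpha+L\gamma_{\bound}^2}{2}\,\delta,
\]
and then to telescope over $k=0,\ldots,K-1$, apply $\Exp[f(x^K)] \geq f(x^*)$, and finally bound $\min_k \Exp\|\nabla f(x^k)\|^2$ by the average to recover the stated rate.

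\textbf{Setting up the descent.} Starting from $L$-smoothness of $f$ applied to the SGD update $x^{k+1} = x^k - \gamma_k \nabla f_i(x^k)$ gives
\[
f(x^{k+1}) \leq f(x^k) - \gamma_k \langle \nabla f(x^k), \nabla f_i(x^k)\rangle + \tfrac{L\gamma_k^2}{2}\|\nabla f_i(x^k)\|^2.
\]
Because $\gamma_k$ depends on the sampled $i$, it cannot be pulled outside the conditional expectation $\Exp_i$; resolving this correlation is the main technical hurdle. I would exploit the two-sided bound $\alpha \leq \gamma_k \leq \gamma_{\bound}$, which follows from the definition of $\text{SPS}_{\max}$ together with the lower bound in \eqref{NewBounds}. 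For the quadratic term, $\gamma_k^2 \leq \gamma_{\bound}^2$ is immediate. For the cross term, a sign-based switching works: when $\langle \nabla f, \nabla f_i\rangle \geq 0$ use $\gamma_k \geq \alpha$, and when $\langle \nabla f, \nabla f_i\rangle < 0$ use $\gamma_k \leq \gamma_{\bound}$. Combining the two cases produces the pointwise inequality
\[
-\gamma_k \langle \nabla f, \nabla f_i\rangle \leq -\tfrac{\alpha+\gamma_{\bound}}{2}\langle \nabla f, \nabla f_i\rangle + \tfrac{\gamma_{\bound}-\alpha}{2}|\langle \nabla f, \nabla f_i\rangle|.
\]

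\textbf{The key inequality.} The crucial step is then to bound the absolute-value term in expectation by $\Exp_i[\|\nabla f_i(x^k)\|^2]$. Applying Cauchy--Schwarz pointwise and then Jensen's inequality twice to the square root yields
\[
\Exp_i[|\langle \nabla f, \nabla f_i\rangle|] \leq \|\nabla f\|\,\Exp_i[\|\nabla f_i\|] \leq \sqrt{\Exp_i\|\nabla f_i\|^2}\cdot\sqrt{\Exp_i\|\nabla f_i\|^2} = \Exp_i[\|\nabla f_i\|^2],
\]
where I additionally used $\|\nabla f(x^k)\|^2 = \|\Exp_i[\nabla f_i(x^k)]\|^2 \leq \Exp_i\|\nabla f_i(x^k)\|^2$ by Jensen applied to the convex function $\|\cdot\|^2$. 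Taking $\Exp_i$ in the smoothness bound, substituting the switching inequality and this absolute-value bound, and then invoking the weak growth condition \eqref{SGC} on every remaining $\Exp_i[\|\nabla f_i\|^2]$ makes the coefficients collapse exactly into $-\zeta/2$ in front of $\|\nabla f(x^k)\|^2$ and $(\gamma_{\bound}-\alpha+L\gamma_{\bound}^2)/2$ in front of $\delta$.

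\textbf{Parameter range and conclusion.} The conditions $c > \rho L/(4L_{\max})$ and $\gamma_{\bound} < \max\{2/(L\rho),\,\bar{\gamma_{\bound}}\}$ are precisely what makes $\zeta>0$: split on the value of $\alpha$. When $\gamma_{\bound} \leq 1/(2cL_{\max})$ one has $\alpha=\gamma_{\bound}$, so $\zeta = 2\gamma_{\bound} - \rho L\gamma_{\bound}^2$, which is positive iff $\gamma_{\bound} < 2/(L\rho)$. When $\gamma_{\bound} > 1/(2cL_{\max})$ one has $\alpha = 1/(2cL_{\max})$, and $\zeta>0$ becomes the quadratic inequality $\rho L\gamma_{\bound}^2 + (\rho-1)\gamma_{\bound} - (1+\rho)/(2cL_{\max}) < 0$, whose positive root is exactly $\bar\gamma_{\bound}$; the condition on $c$ guarantees that these two regimes overlap. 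Summing the per-iteration inequality from $k=0$ to $K-1$, using $\Exp[f(x^K)] \geq f(x^*)$, dividing by $\zeta K/2$, and bounding the minimum by the average completes the argument. I expect the subtle step to be the absolute-value bound that combines Cauchy--Schwarz with the two Jensen applications, since it is what makes the $\delta$- and $\|\nabla f\|^2$-coefficients match the stated $\zeta$; the remaining manipulations are careful but routine coefficient-chasing.
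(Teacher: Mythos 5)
Your proposal is correct and follows essentially the same route as the paper's proof: the smoothness inequality, decoupling the correlated step-size through the two-sided bound $\alpha \le \gamma_k \le \gamma_{\bound}$, applying condition \eqref{SGC} (valid since $\gamma_{\bound}-\alpha+L\gamma_{\bound}^2>0$), telescoping with $\Exp[f(x^K)]\ge f(x^*)$, and the same two-case analysis on $\alpha=\min\{\tfrac{1}{2cL_{\max}},\gamma_{\bound}\}$ to guarantee $\zeta>0$ under $c>\tfrac{\rho L}{4L_{\max}}$. The only local difference is the implementation of the decoupling step: the paper uses the polarization identity $-2\gamma_k\langle \nabla f,\nabla f_i\rangle=\gamma_k\left(\|\nabla f_i-\nabla f\|^2-\|\nabla f_i\|^2-\|\nabla f\|^2\right)$ (bounding $\gamma_k$ above on the nonnegative term and below on the negative ones), while you use sign-splitting plus the Cauchy--Schwarz/Jensen bound $\Exp_i|\langle\nabla f,\nabla f_i\rangle|\le\Exp_i\|\nabla f_i\|^2$ --- both yield the identical intermediate inequality $\Exp_i f(x^{k+1})\le f(x^k)+\tfrac{\gamma_{\bound}-\alpha+L\gamma_{\bound}^2}{2}\Exp_i\|\nabla f_i(x^k)\|^2-\tfrac{\gamma_{\bound}+\alpha}{2}\|\nabla f(x^k)\|^2$ and hence the same $\zeta$ and neighborhood.
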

From the above theorem, we observe that SGD with SPS results in $O(1/K)$ convergence to a neighborhoud governed by $\delta$. For the case that $\delta=0$, condition \eqref{SGC} reduces to the strong growth condition (SGC) used in several recent papers \citep{schmidt2013fast, vaswani2019painless,vaswani2018fast}. It can be easily shown that functions that satisfy the SGC condition necessarily satisfy the interpolation property~\citep{vaswani2018fast}. In the special case of interpolation, SGD with SPS is able to find a first-order stationary point as efficiently as deterministic gradient descent. Moreover, for $c \in \left(\frac{\rho L}{4 L_{\max}},\frac{\rho L}{2 L_{\max}}\right]$, the lower bound $\frac{1}{2cL_{\max}}$ of SPS lies in the range $\left[\frac{1}{\rho L}, \frac{2}{\rho L}\right)$ and thus the step-size is larger than $\frac{1}{\rho L}$, the best constant step-size analyzed in this setting~\citep{vaswani2018fast}. 

\subsection{Additional Convergence Results}
In Appendix~\ref{sec:additional-theory}, we prove a $O(1/\sqrt{K})$ convergence rate for non-smooth convex functions. Furthermore, similar to~\citet{schmidt201111}, we propose a method to increase the mini-batch size for evaluating the stochastic gradient and guarantee convergence to the optimal solution without interpolation. 

\section{Experimental Evaluation}
\label{SectionExperiments}
We validate our theoretical results using synthetic experiments in Section~\ref{sec:experiments-syn-ninter}. In Section~\ref{sec:exps-interpolation}, we evaluate the performance of SGD with SPS when training over-parametrized models. In particular, we compare against state-of-the-art optimization methods for deep matrix factorization, binary classification using kernel methods and multi-class classification using standard deep neural network models. 

\subsection{Synthetic experiments}
\label{sec:experiments-syn-ninter}
We use a synthetic dataset to validate our theoretical results. Following the procedure outlined in~\citet{nutini2017let}, we generate a sparse dataset for binary classification with the number of examples $n = 1$k and dimension $d = 100$. We use the logistic loss with and without $\ell_2$ regularization. The data is generated to ensure that the function $f$ is strongly convex in both cases. We evaluate the performance of SPS$_{\max}$ and set set $c = 1/2$ as suggested by Theorem~\ref{TheoremStronglyConvex}. We experiment with three values of $\gamma_{\bound} = \{1, 5, 100\}$. In the regularized case, $f^*_i$ can be pre-computed in closed form for each $i$ using the Lambert W function~\citep{corless1996lambertw} (see Appendix~\ref{app:compute:fi}); while $f_i^*$ is simply zero in the unregularized case. A similar observation has been used to construct a ``truncated'' model for improving the robustness of gradient descent in~\citet{asi2019importance}. In both cases, we benchmark the performance of SPS against constant step-size SGD with $\gamma = \{0.1, 0.01\}$. 
From Figure~\ref{fig:syn}, we observe that constant step-size SGD is not robust to the step-size; it has good convergence with step-size $0.1$, slow convergence when using a step-size of $0.01$ and we observe divergence for larger step-sizes. In contrast, all the variants of SPS converge to a neighbourhood of the optimum and the size of the neighbourhood increases as $\gamma_{\bound}$ increases as predicted by the theory. 

\begin{figure}[!h]
    \includegraphics[scale = 0.07]{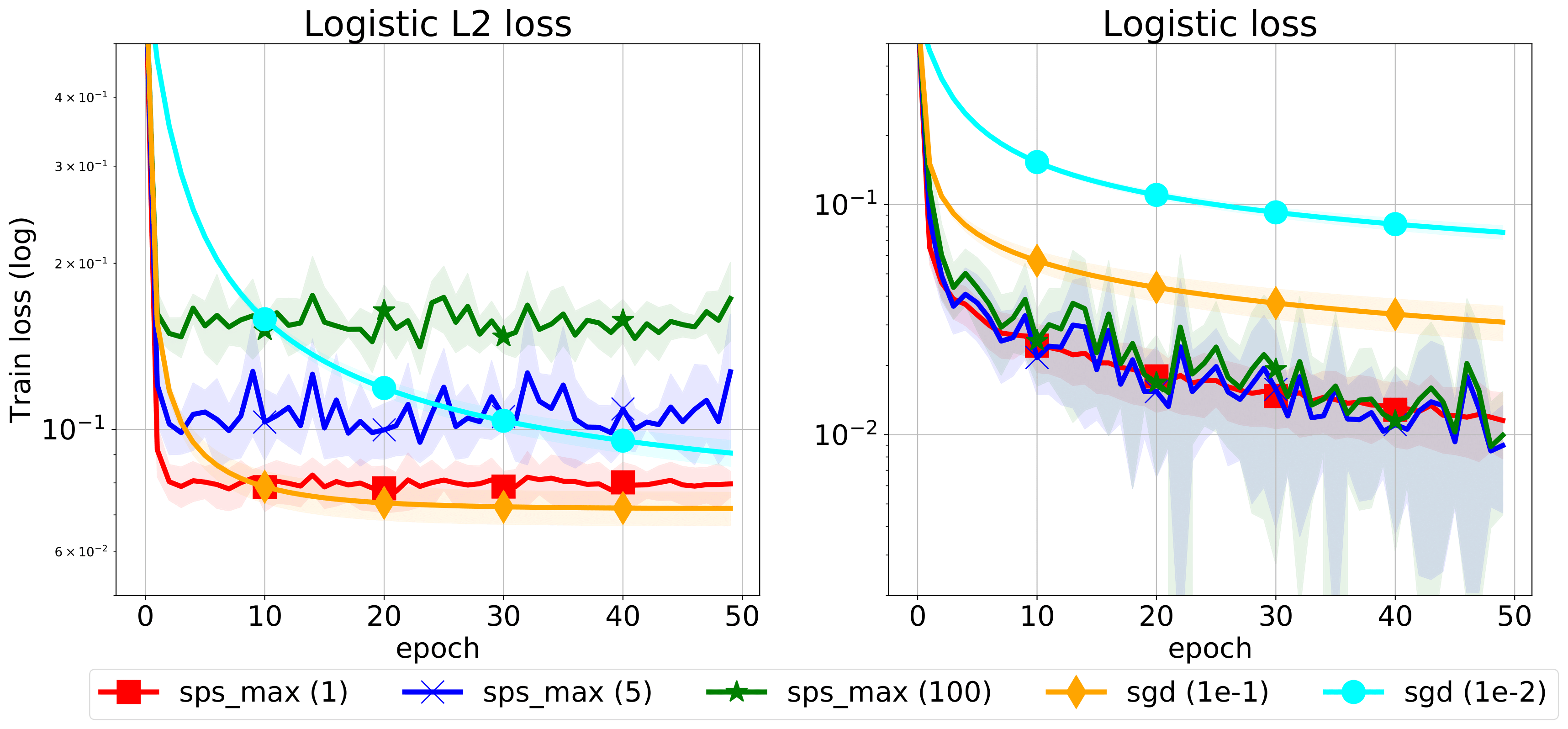}
    \caption{Synthetic experiment to benchmark SPS against constant step-size SGD for binary classification using the (left) regularized and (right) unregularized logistic loss.}
    \label{fig:syn}
\end{figure}

\subsection{Experiments for over-parametrized models}
\label{sec:exps-interpolation}

\begin{figure*}[!h]
    \centering
     \includegraphics[width = 1\textwidth]{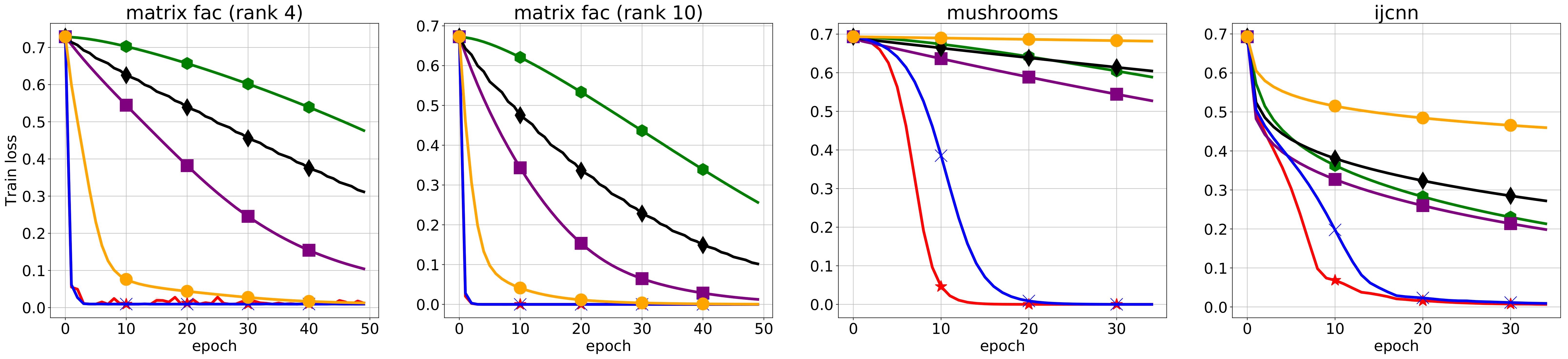}
     \includegraphics[width = 0.8\textwidth]{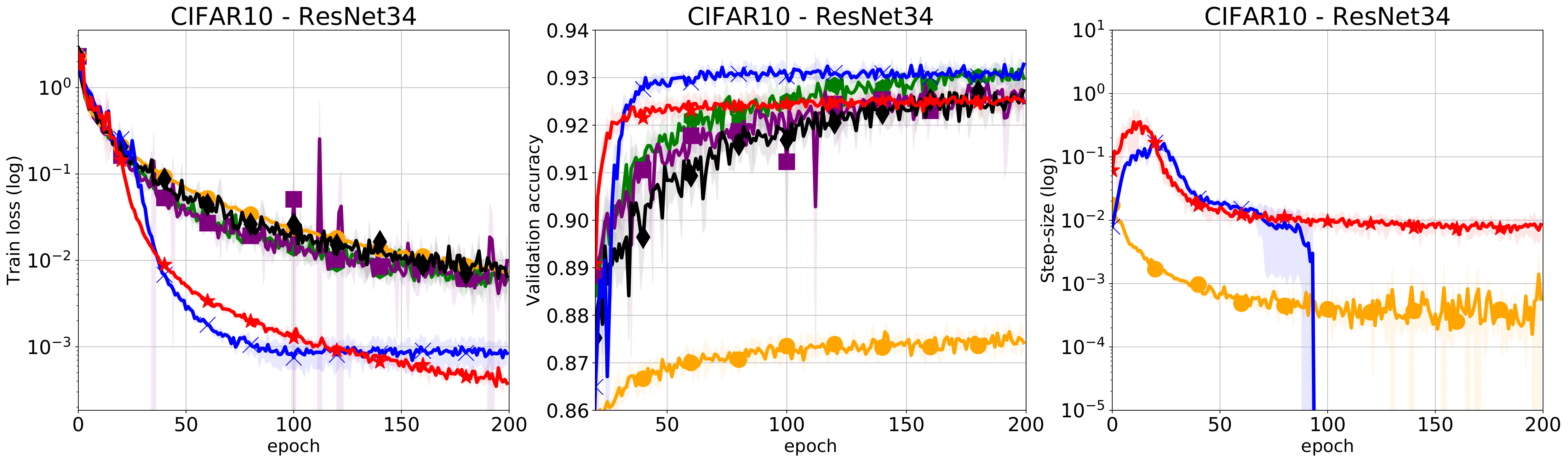}
    \includegraphics[width = 0.8\textwidth]{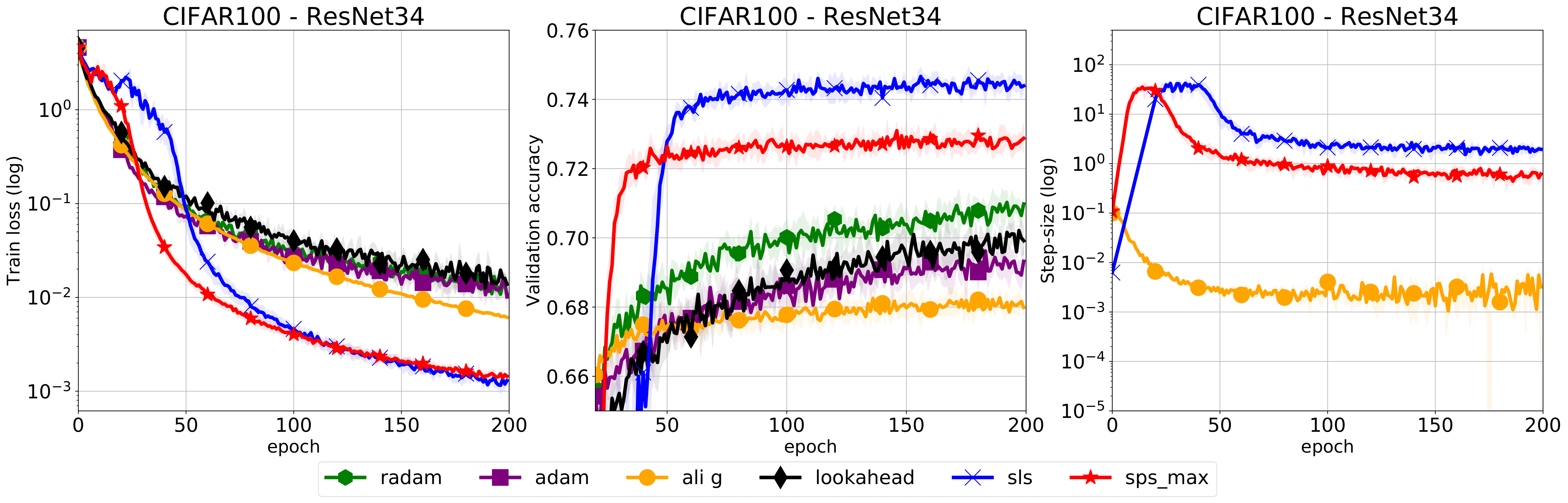}
    \caption{Comparing the performance of optimizers on deep matrix factorization (top left) and binary classification using kernels (top right) and multi-class classification on CIFAR-10 and CIFAR-100 with ResNet34.}
\label{fig:inter}    
\end{figure*}

In this section, we consider training over-parameterized models that (approximately) satisfy the interpolation condition. Following the logic of the previous section, we evaluate the performance of both the SPS and SPS$_{\max}$ variants with $f_i^* = 0$. Throughout our experiments, we found that SPS without an upper-bound on the step-size is not robust to the misspecification of interpolation and results in large fluctuations when interpolation is not exactly satisfied. For SPS$_{\max}$, the value of $\gamma_{\bound}$ that results in good convergence depends on the problem and requires careful parameter tuning. This is also evidenced by the highly variable performance of ALI-G~\citep{berrada2019training} that uses a constant upper-bound on the step-size. To alleviate this problem, we use a \emph{smoothing} procedure that prevents large fluctuations in the step-size across iterations. This can be viewed as using an adaptive iteration-dependent upper-bound $\gamma^{k}_{\bound}$ where $\gamma^{k}_{\bound} = \tau^{b/n} \,  \gamma^{k-1}$. Here, $\tau$ is a tunable hyper-parameter set to $2$ in all our experiments, $b$ is the batch-size and $n$ is the number of examples. We note that using an adaptive $\gamma_{\bound}$ can be easily handled by our theoretical results. A similar smoothing procedure has been used to control the magnitude of the step-sizes when using the Barzilai-Borwein step-size selection procedure for SGD~\citep{tan2016barzilai} and is related to the ``reset`` option for using larger step-sizes in~\citet{vaswani2019painless}. We set $c = 1/2$ for binary classification using kernels (convex case) and deep matrix factorization (non-convex PL case). For multi-class classification using deep networks, we empirically find that any value of $c \geq 0.2$ results in convergence. In this case, we observed that across models and datasets, the fastest convergence is obtained with $c = 0.2$ and use this value. 

We compare our methods against Adam~\citep{kingma2014adam}, which is the most common adaptive method, and other recent methods that report better performance than Adam: (i) stochastic line-search (SLS) \citep{vaswani2019painless} (ii) ALI-G~\citep{berrada2019training}\footnote{With ALI-G we refer to the method analyzed in~\citet{berrada2019training}. This is SGD with step-size the one described in Section~\ref{SectionThePolyakStep}. We highlight that the experiments in~\citet{berrada2019training} used momentum on top of the analyzed method but without any convergence quarantees. To ensure a fair comparison with SPS, we do not use such momentum.}  (iii) rectified Adam (RADAM)~\citep{liu2019variance} (iv) Look-ahead optimizer~\citep{zhang2019lookahead}. We use the default learning rates and momentum (non-zero) parameters and the publicly available code for the competing methods. All our results are averaged across $5$ independent runs. 

\paragraph{Deep matrix factorization.}
\label{sec:experiments-syn-inter}
In the first experiment, we use deep matrix factorization to examine the effect of over-parametrization for the different optimizers. In particular, we solve the non-convex regression problem: $\min_{W_1, W_2} \Exp_{x \sim N(0,I)} \norm{W_2 W_1 x - Ax}\kern-.1em{}^2$ and use the experimental setup in~\citet{rolinek2018l4, vaswani2019painless, rahimi2017reflections}. We choose $A \in \mathbb{R}^{10 \times 6}$ with condition number $\kappa(A) = 10^{10}$ and generate a fixed dataset of $1000$ samples. We control the degree of over-parametrization via the rank $k$ of the matrix factors $W_1 \in \mathbb{R}^{k \times 6}$ and $W_2 \in \mathbb{R}^{10 \times k}$. In Figure~\ref{fig:inter}, we show the training loss as we vary the rank $k \in \{4,10\}$ (additional experiments are in Appendix~\ref{app:additional}). For $k =  4$, the interpolation condition is \emph{not} satisfied, whereas it is exactly satisfied for $k = 10$. We observe that (i) SPS is robust to the degree of over-parametrization and (ii) has performance equal to that of SLS. However, note that SPS does not require the expensive back-tracking procedure of SLS and is arguably simpler to implement. 

\paragraph{Binary classification using kernels.}
\label{sec:experiments-kernels}
Next, we compare the optimizers' performance in the convex, interpolation regime. We consider binary classification using RBF kernels, using the logistic loss without regularization. The bandwidths for the RBF kernels are set according to the validation procedure described in~\citet{vaswani2019painless}. We experiment with four standard datasets: mushrooms, rcv1, ijcnn, and w8a from LIBSVM~\citep{libsvm}. Figure \ref{fig:inter} shows the training loss on the mushrooms and ijcnn for the different optimizers. Again, we observe the strong performance of SPS compared to the other optimizers. 

\paragraph{Multi-class classification using deep networks.}
We benchmark the convergence rate and generalization performance of SPS methods on standard deep learning experiments. We consider non-convex minimization for multi-class classification using deep network models on the CIFAR10 and CIFAR100 datasets. Our experimental choices follow the setup in~\citet{luo2019adaptive}. 
For CIFAR10 and CIFAR100, we experiment with the standard image-classification architectures: ResNet-34~\citep{he2016deep} and DenseNet-121~\citep{huang2017densely}. For space concerns, we report only the ResNet experiments in the main paper and relegate the DenseNet and MNIST experiments to Appendix~\ref{app:additional}. From Figure~\ref{fig:inter}, we observe that SPS results in the best training loss across models and datasets. For CIFAR-10, SPS results in competitive generalization performance compared to the other optimizers, whereas for CIFAR-100, its generalization performance is better than all optimizers except SLS. Note that ALI-G, the closest related optimizer results in worse generalization performance in all cases. We note that SPS is able to match the performance of SLS, but does not require an expensive back-tracking line-search or additional tricks. 

For this set of experiments, we also plot how the step-size varies across iterations for SLS, SPS and ALI-G. Interestingly, for both CIFAR-10 and CIFAR-100, we find that step-size for both SPS and SLS follows a cyclic behaviour - a warm-up period where the step-size first increases and then decreases to a constant value. Such a step-size schedule has been empirically found to result in good training and generalization performance~\citep{loshchilov2016sgdr} and our results show that SPS is able to simulate this behaviour. 

\vspace{-2ex}
\section{Conclusion}
\label{SectionConclusion}
\vspace{-2ex}
We proposed and theoretically analyzed a stochastic variant of the classical the Polyak step-size. We quantified the convergence rate of SPS in numerous settings and used our analysis techniques to prove new results for constant step-size SGD. Furthermore, via experiments on a variety of tasks we showed the strong performance of SGD with SPS as compared to state-of-the-art optimization methods. There are many possible interesting extensions of our work: using SPS with accelerated methods, studying the effect of mini-batching and non-uniform sampling techniques and extensions to the distributed and decentralized settings. 

\subsubsection*{Acknowledgements}
Nicolas Loizou and Sharan Vaswani acknowledge support by the IVADO Postdoctoral Funding Program. Issam Laradji is funded by the UBC Four-Year Doctoral Fellowships (4YF). This research was partially supported by the Canada CIFAR AI Chair Program and by a Google Focused Research award. Simon Lacoste-Julien is a CIFAR
Associate Fellow in the Learning in Machines \& Brains program.

The authors would like to thank Frederik Kunstner  for help with the convex proofs, and Aaron Defazio for fruitful discussions and feedback on the manuscript.

{
\bibliographystyle{apalike}
\bibliography{SPS}
}

\appendix 

\onecolumn
\aistatstitle{Supplementary Material  \\ Stochastic Polyak Step-size for SGD: \\ An Adaptive Learning Rate for Fast Convergence}
The Supplementary Material is organized as follows: In Section~\ref{TechPrelAppenidx}, we provide the basic definitions mentioned in the main paper. We also present the convergence of deterministic subgradient method with the classical Polyak step-size. In Section~\ref{AppendixProofs} we present the proofs of the main theorems and in Section~\ref{sec:additional-theory} we provide additional convergence results. In Section~\ref{app:compute:fi}, we provide the closed form solutions for $f_i*$ for standard regularized binary surrogate losses. Finally, additional numerical experiments are presented in Section~\ref{app:additional}.

\section{Technical Preliminaries}
\label{TechPrelAppenidx}

\subsection{Basic Definitions}
\label{BasicDefinitions}
Let us present some basic definitions used throughout the paper.

\begin{definition}[Strong Convexity / Convexity]
The function $f : \R^n \rightarrow \R$, is $\mu$-strongly convex, if there exists a constant
$\mu > 0$ such that $\forall x, y \in \R^n$:
\begin{equation}
\label{stronglyconvex}
 f(x) \geq f(y)+ \dotprod{\nabla f(y) , x-y} + \frac{\mu}{2} \norm{x-y}^2
\end{equation}
for all $x \in \R^d$. If inequality \eqref{stronglyconvex} holds with $\mu=0$ the function $f$ is convex.
\end{definition}

\begin{definition}[Polyak-Lojasiewicz Condition]
The function $f : \R^n \rightarrow \R$, satisfies the Polyak-Lojasiewicz (PL) condition, if there exists a constant
$\mu > 0$ such that $\forall x \in \R^n$:
\begin{equation}
\label{PLcondition}
\|\nabla f(x)\|^2 \geq 2 \mu (f(x)-f^*) 
\end{equation}
\end{definition}

\begin{definition}[$L$-smooth]
The function $f : \R^n \rightarrow \R$, $L$-smooth, if there exists a constant
$L > 0$ such that $\forall x, y \in \R^n$:
\begin{equation}
\label{Smooth}
\|\nabla f(x)-\nabla f(y)\| \leq L \|x-y\|
\end{equation}
or equivalently:
\begin{equation}
\label{Smooth2}
 f(x) \leq f(y)+ \dotprod{\nabla f(y) , x-y} + \frac{L}{2} \norm{x-y}^2
\end{equation}
\end{definition}

\subsection{The Deterministic Polyak step-size}
\label{AppendixDeterministicPolyak}

In this section we describe the Polyak step-size for the subgradient method as presented in \citet{polyak1987introduction} for solving $\min_{x\in\R^d} f(x)$ where $f$ is convex, not necessarily smooth function.

Consider the subgradient method:
$$x^{k+1}=x^k-\gamma_k g^k,$$
where $\gamma_k$ is the step-size (learning rate) and $g^k$ is any subgradient of function $f$ at point $x^k$.

\begin{theorem}
\label{DeterministicPolyakTheorem}
Let $f$ be convex function.  Let $\gamma_k= \frac{f(x^k)-f(x^*)}{\|g^k\|^2}$ be the step-size in the update rule of subgradient method. Here $f(x^*)$ denotes the optimum value of function $f$.  Let $G>0$ such that $\| g^k\|^2< G^2$. Then,
$$ f^k_* -f(x^*) \leq \frac{G \|x^0-x^*\|}{\sqrt{k+1}}=O\left(\frac{1}{\sqrt{k}}\right),$$
where $f^k_*=\min \{f(x^i): i=0,1,\dots,k\}$.
\end{theorem}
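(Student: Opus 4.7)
The plan is to follow the classical one-step recursion for (sub)gradient methods and then telescope. First I would expand $\|x^{k+1} - x^*\|^2 = \|x^k - x^*\|^2 - 2\gamma_k \langle g^k, x^k - x^*\rangle + \gamma_k^2 \|g^k\|^2$, then invoke convexity to upper bound $-\langle g^k, x^k-x^*\rangle \leq -(f(x^k) - f(x^*))$, yielding the quadratic-in-$\gamma_k$ bound $Q(\gamma_k)$ that is discussed earlier in the paper just before the SPS definition.

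Next I would substitute the Polyak choice $\gamma_k = (f(x^k) - f(x^*))/\|g^k\|^2$, which is precisely the minimizer of $Q(\gamma)$. A direct computation collapses the two $\gamma_k$-dependent terms into a single negative term, giving the clean contraction
\begin{equation*}
\|x^{k+1} - x^*\|^2 \leq \|x^k - x^*\|^2 - \frac{(f(x^k) - f(x^*))^2}{\|g^k\|^2}.
\end{equation*}
Using the bound $\|g^k\|^2 \leq G^2$ on the denominator and rearranging gives $(f(x^k)-f(x^*))^2/G^2 \leq \|x^k - x^*\|^2 - \|x^{k+1} - x^*\|^2$.

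Then I would telescope this inequality for $i = 0, \dots, k$, so that the right-hand side collapses to at most $\|x^0 - x^*\|^2$. Since $f^k_* - f(x^*) \leq f(x^i) - f(x^*)$ for every $i \leq k$, each term $(f(x^i) - f(x^*))^2$ on the left is at least $(f^k_* - f(x^*))^2$, giving $(k+1)(f^k_* - f(x^*))^2 \leq G^2 \|x^0 - x^*\|^2$. Taking square roots yields the claimed $O(1/\sqrt{k})$ rate.

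There is no serious obstacle here; the only subtle point is making sure the substitution of the Polyak step-size really cancels the cross term to leave a negative contribution (this is automatic because $\gamma_k$ minimizes $Q(\gamma)$, and the minimum value of $Q$ is $\|x^k-x^*\|^2 - (f(x^k)-f^*)^2/\|g^k\|^2$), and that the final step correctly lower-bounds each summand by $(f^k_* - f(x^*))^2$ before dividing by $k+1$. Everything else is routine algebra and telescoping.
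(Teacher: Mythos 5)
Your proposal is correct and follows essentially the same route as the paper's proof: the one-step expansion with the subgradient inequality, substitution of the Polyak step-size to obtain the contraction $\|x^{k+1}-x^*\|^2 \leq \|x^k-x^*\|^2 - \left[f(x^k)-f(x^*)\right]^2/\|g^k\|^2$, telescoping with the bound $\|g^k\|^2 < G^2$, and lower-bounding each summand by $\left[f^k_*-f(x^*)\right]^2$ before taking square roots. No gaps.
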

\begin{proof}
\begin{eqnarray}
\label{nalksxalx}
\|x^{k+1}-x^*\|^2&=&\|x^k-\gamma_k g^k-x^*\|^2\notag\\
&=&\|x^k-x^*\|^2-2 \gamma_k \langle x^k-x^*, g^k \rangle + \gamma_k^2 \| g^k\|^2\notag\\
&\leq&\|x^k-x^*\|^2-2 \gamma_k \left[f(x^k)-f(x^*)\right] + \gamma_k^2 \| g^k\|^2
\end{eqnarray}
where the last line follows from the definition of subgradient:
$$f(x^*)\geq f(x^k)+\langle x^k-x^*, g^k \rangle$$
Polyak suggested to use the step-size:
\begin{equation}
\label{PolyakOriginalStep}
\boxed{\gamma_k= \frac{f(x^k)-f(x^*)}{\|g^k\|^2}}
\end{equation}
which is precisely the step-size that minimize the right hand side of \eqref{nalksxalx}. 
That is, $$\gamma_k= \frac{f(x^k)-f(x^*)}{\|g^k\|^2}=\text{argmin}_{\gamma_k} \left[\|x^k-x^*\|^2-2 \gamma_k \left[f(x^k)-f(x^*)\right] + \gamma_k^2 \| g^k\|^2 \right]$$.
By using this choice of step-size in \eqref{nalksxalx} we obtain:
\begin{eqnarray}
\|x^{k+1}-x^*\|^2
&\leq&\|x^k-x^*\|^2-2 \gamma_k \left[f(x^k)-f(x^*)\right] + \gamma_k^2 \| g^k\|^2\notag\\
&\overset{\eqref{PolyakOriginalStep}}{=}&\|x^k-x^*\|^2-\frac{\left[f(x^k)-f(x^*)\right]^2}{\| g^k\|^2} 
\end{eqnarray}
From the above note that $\|x^k-x^*\|^2$ is monotonic function.
Now using telescopic sum and by assuming $\| g^k\|^2< G^2$ we obtain:
\begin{eqnarray}
\|x^{k+1}-x^*\|^2
&\leq &\|x^0-x^*\|^2- \frac{1}{G^2} \sum_{i=0}^k \left[f(x^i)-f(x^*)\right]^2
\end{eqnarray}
Thus,
$$\frac{1}{G^2} \sum_{i=0}^k \left[f(x^i)-f(x^*)\right]^2\leq \|x^0-x^*\|^2-\|x^{k+1}-x^*\|^2\leq \|x^0-x^*\|^2$$
Let us define $f^k_*=\min \{f(x^i): i=0,1,\dots,k\}$ then:
$[f^k_* -f(x^*)]^2\leq \frac{G^2 \|x^0-x^*\|^2}{k+1}$
and $$ f^k_* -f(x^*) \leq \frac{G \|x^0-x^*\|}{\sqrt{k+1}}=O\left(\frac{1}{\sqrt{k}}\right)$$
\end{proof}
For more details and slightly different analysis check \citet{polyak1987introduction} and \citet{boyd2003subgradient}. In \citet{hazan2019revisiting} similar analysis to the above have been made for the deterministic gradient descent ($g^k=\nabla f(x^k)$) under several assumptions. (convex, strongly convex , smooth).

\section{Proofs of Main Results}
\label{AppendixProofs}
In this section we present the proofs of the main theoretical results presented in the main paper. That is, the convergence analysis of SGD with $\text{SPS}_{\max}$ and SPS under different combinations of assumptions on functions $f_i$ and $f$ of Problem~\eqref{MainProb}.

First note that the following inequality can be easily obtained by the definition of $\text{SPS}_{\max}$ \eqref{SPLRmax}:
\begin{equation}
\label{cakslaasasa}
\gamma_k^2 \|\nabla f_{i}(x^k)\|^2 \leq \frac{\gamma_k }{c}\left[f_i(x^k)-f_i^*\right]
\end{equation}
We use the above inequality in several parts of our proofs. It is the reason that we are able to obtain an upper bound of $\gamma_k^2 \|\nabla f_{i}(x^k)\|^2 $ without any further assumptions. For the case of SPS \eqref{SPLR},  inequality \eqref{cakslaasasa} becomes equality.

\subsection{Proof of Theorem~\ref{TheoremStronglyConvex}}

\begin{proof}
\begin{eqnarray*}
\|x^{k+1}-x^*\|^2 &=&\|x^k-\gamma_k \nabla f_i(x^k)-x^*\|^2\notag\\
&=&\|x^k-x^*\|^2-2 \gamma_k \langle x^k-x^*, \nabla f_i(x^k) \rangle + \gamma_k^2 \| \nabla f_i(x^k)\|^2\notag\\
&\overset{\eqref{cakslaasasa}}{\leq}&\|x^k-x^*\|^2-2 \gamma_k \langle x^k-x^*, \nabla f_i(x^k) \rangle +  \frac{\gamma_k }{c} \left[f_i(x^k)-f_i^*\right] \notag\\
&\overset{c\geq1/2}{\leq} & \|x^k-x^*\|^2-2 \gamma_k \langle x^k-x^*, \nabla f_i(x^k) \rangle +  2 \gamma_k \left[f_i(x^k)-f_i^*\right] \notag\\
&=& \|x^k-x^*\|^2-2 \gamma_k \langle x^k-x^*, \nabla f_i(x^k) \rangle +  2 \gamma_k \left[f_i(x^k)-f_i(x^*)+f_i(x^*)-f_i^*\right] \notag\\
&=& \|x^k-x^*\|^2+2 \gamma_k \left[ -\langle x^k-x^*, \nabla f_i(x^k) \rangle + f_i(x^k)-f_i(x^*) \right]+ 2 \gamma_k \left[f_i(x^*)-f_i^*\right]
\end{eqnarray*}
From convexity of functions $f_i$ it holds that $-\langle x^k-x^*, \nabla f_i(x^k) \rangle + f_i(x^k)-f_i(x^*)\leq0$, $\forall i \in [n]$. Thus, 
\begin{eqnarray*}
\|x^{k+1}-x^*\|^2 &=& \|x^k-x^*\|^2+2 \gamma_k \underbrace{\left[ -\langle x^k-x^*, \nabla f_i(x^k) \rangle + f_i(x^k)-f_i(x^*) \right]}_{\leq 0}+ 2 \gamma_k \underbrace{\left[f_i(x^*)-f_i^*\right]}_{\geq 0} \notag\\
&\overset{\eqref{NewBounds},\, \eqref{SPLRmax}}{ \leq} & \|x^k-x^*\|^2+2 \min\left\{\frac{1}{2cL_{\max}},\gamma_{\bound}\right\} \left[ -\langle x^k-x^*, \nabla f_i(x^k) \rangle + f_i(x^k)-f_i(x^*) \right]\notag\\ && + 2\gamma_{\bound} \left[f_i(x^*)-f_i^*\right] 
\end{eqnarray*}
By taking expectation condition on $x^k$
\begin{eqnarray*}
\Exp_i\|x^{k+1}-x^*\|^2 &\leq &  \|x^k-x^*\|^2+2 \min\left\{\frac{1}{2cL_{\max}},\gamma_{\bound}\right\} \left[ -\langle x^k-x^*, \nabla f(x^k) \rangle + f(x^k)-f(x^*) \right] \notag\\ && + 2\gamma_{\bound} \Exp_i\left[f_i(x^*)-f_i^*\right] \notag\\
&\overset{\eqref{eq:sigma_def2}}{=} &  \|x^k-x^*\|^2+2 \min\left\{\frac{1}{2cL_{\max}},\gamma_{\bound}\right\} \left[ -\langle x^k-x^*, \nabla f(x^k) \rangle + f(x^k)-f(x^*) \right] \notag\\ && + 2\gamma_{\bound} \sigma^2
\end{eqnarray*}
From strong convexity of the objective function $f$ we have that $f(x^k)-f(x^*)-\langle x^k-x^*, \nabla f(x^k) \rangle \leq -\frac{\mu}{2}\|x^{k}-x^*\|^2$. Thus, we obtain:
\begin{eqnarray*}
\label{eq:sc-inter1}
\Exp_i\|x^{k+1}-x^*\|^2 
&\leq & \left(1-\mu \min\left\{\frac{1}{2cL_{\max}},\gamma_{\bound}\right\} \right) \|x^k-x^*\|^2  + 2\gamma_{\bound} \sigma^2
\end{eqnarray*}
Taking expectations again and using the tower property:
\begin{eqnarray}
\label{eq:sc-inter}
\Exp\|x^{k+1}-x^*\|^2
&\leq &\left(1-\mu \min \left\{\frac{1}{2cL_{\max}},\gamma_{\bound}\right\} \right) \Exp\|x^k-x^*\|^2  + 2\gamma_{\bound} \sigma^2 
\end{eqnarray}

Recursively applying the above and summing up the resulting geometric series gives:
\begin{eqnarray*}
\Exp \|x^{k}-x^*\|^2 &\leq & \left(1-\mu \min \left\{\frac{1}{2cL_{\max}},\gamma_{\bound}\right\} \right)^k \|x^0-x^*\|^2 \notag\\ && + 2\gamma_{\bound} \sigma^2 \sum_{j=0}^{k-1}\left(1-\mu \min \left\{\frac{1}{2cL_{\max}},\gamma_{\bound}\right\} \right)^j
\notag\\
&\leq & \left(1-\mu \min \left\{\frac{1}{2cL_{\max}},\gamma_{\bound}\right\} \right)^k  \|x^0-x^*\|^2 + 2\gamma_{\bound} \sigma^2 \frac{1}{\mu \min \left\{\frac{1}{2cL_{\max}},\gamma_{\bound}\right\}}
\end{eqnarray*}
Let $\alpha=\min \left\{\frac{1}{2cL_{\max}},\gamma_{\bound}\right\}$ then,
\begin{eqnarray}
\Exp \|x^{k}-x^*\|^2 
&\leq & \left(1-\mu \alpha \right)^k  \|x^0-x^*\|^2 + \frac{2\gamma_{\bound} \sigma^2 }{\mu \alpha}
\end{eqnarray}
From definition of $\alpha$ is clear that having small parameter $c$ improves both the convergence rate $1-\mu \alpha$ and the neighborhood $\frac{2\gamma_{\bound} \sigma^2 }{\mu \alpha}$. Since we have the restriction $c\geq\frac{1}{2}$ the best selection would be $c=\frac{1}{2}$.
\end{proof}

\subsubsection{Comparison to the setting from~\citet{berrada2019training}}
\label{AliGcomparison}
In the next corollary, in order to compare against the results for ALI-G from~\citet{berrada2019training}, we make the strong assumption that all functions $f_i$ have the same properties. We note that such an assumption in the interpolation setting is quite strong and reduces the finite-sum optimization to minimization of a single function in the finite sum. 
\begin{corollary}
\label{nkalsa}
Let all the assumptions in Theorem~\ref{TheoremStronglyConvex} be satisfied and let all $f_i$ be $\mu$-strongly convex\footnote{This is a much stronger assumption than assuming that functions $f_i$ are convex and $f$ is strongly convex, which is the main assumption of Theorem~\ref{TheoremStronglyConvex}. Nevertheless, assuming that all $f_i$ are $\mu$-strongly convex functions implies that the objective function $f$ is $\mu$-strongly convex and that functions $f$ are convex. Thus, Theorem~\ref{TheoremStronglyConvex} still holds.} and $L$-smooth. SGD with $\text{SPS}_{\max}$ with $c=1/2$ converges as:
\begin{eqnarray*}
\Exp\|x^{k}-x^*\|^2
\leq \left(1-\frac{\mu}{L}\right)^k \|x^0-x^*\|^2 +  \frac{2\sigma^2 L}{\mu^2}.
\end{eqnarray*}
For the interpolated case ($\sigma=0$) we obtain the same convergence as Corollary~\ref{interpolationStrongConvex} with $L_{\max}=L$.
\end{corollary}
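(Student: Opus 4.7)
The plan is to derive this corollary as a direct specialization of Theorem~\ref{TheoremStronglyConvex}, together with the upper bound on $\gamma_k$ from \eqref{NewBounds}. The key observations are (i) since every $f_i$ is $L$-smooth we have $L_{\max}=L$, and (ii) since every $f_i$ is $\mu$-strongly convex, inequality \eqref{NewBounds} gives $\gamma_k \leq \frac{1}{2c\mu_i} = \frac{1}{\mu}$ for $c=1/2$. Thus SPS is automatically bounded in the interval $[1/L,\,1/\mu]$, and SPS$_{\max}$ with the choice $\gamma_{\bound} = 1/\mu$ coincides with SPS (the $\min$ in SPS$_{\max}$ is inactive).

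With these observations fixed, the computation is purely substitution into the conclusion of Theorem~\ref{TheoremStronglyConvex}. I would compute
\begin{equation*}
\alpha = \min\!\left\{\tfrac{1}{2cL_{\max}},\,\gamma_{\bound}\right\} = \min\!\left\{\tfrac{1}{L},\,\tfrac{1}{\mu}\right\} = \tfrac{1}{L},
\end{equation*}
using $\mu \le L$, and then plug $\alpha=1/L$ and $\gamma_{\bound}=1/\mu$ into
\begin{equation*}
\Exp\|x^{k}-x^*\|^2 \leq (1-\mu\alpha)^k\,\|x^0-x^*\|^2 + \frac{2\gamma_{\bound}\sigma^2}{\mu\alpha}.
\end{equation*}
The contraction factor becomes $1-\mu/L$, and the neighborhood term becomes $\frac{2(1/\mu)\sigma^2}{\mu(1/L)}=\frac{2\sigma^2 L}{\mu^2}$, giving exactly the claimed bound.

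There is no genuine obstacle here; the only thing worth stating carefully is the choice of $\gamma_{\bound}$. I would justify setting $\gamma_{\bound}=1/\mu$ by noting that the upper bound from \eqref{NewBounds} makes any larger $\gamma_{\bound}$ equivalent (SPS is already capped by $1/\mu$ under the strengthened assumption on the $f_i$'s), while a smaller $\gamma_{\bound}$ would only shrink the step and thus slow the rate. For the interpolated case $\sigma=0$, the neighborhood term vanishes and one recovers the linear rate $(1-\mu/L)^k$, matching Corollary~\ref{noaksoa} with $L_{\max}=L$ as noted after the statement.
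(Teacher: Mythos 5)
Your proof is correct and follows essentially the same route as the paper: the paper also obtains this corollary by using \eqref{NewBounds} to note that under $\mu$-strong convexity of each $f_i$ the step-size is capped at $\frac{1}{2c\mu}=\frac{1}{\mu}$, and then substituting $\gamma_{\bound}=\frac{1}{\mu}$ (so $\alpha=\min\{1/L,1/\mu\}=1/L$) into \eqref{StronglyConvexTheoremResult}, which yields the rate $1-\mu/L$ and neighborhood $\frac{2\sigma^2 L}{\mu^2}$. Your added remark justifying why any larger $\gamma_{\bound}$ is equivalent is a reasonable (and slightly more careful) articulation of the same substitution.
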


Note that, the result of Corollary~\ref{nkalsa} is obtained by substituting $\gamma_{\bound}\overset{\eqref{NewBounds}}{=}\frac{1}{2 c \mu}\overset{c=\frac{1}{2}}{=}{\frac{1}{\mu}}$ into \eqref{StronglyConvexTheoremResult}.

For the setting of Corollary~\ref{nkalsa}, ~\citet{berrada2019training} show the linear convergence to a much larger neighborhood than ours and with slower rate. In particular, their rate is $1-\frac{\mu}{8L}$ and the neighborhood is $\frac{8L}{\mu}(\frac{\epsilon}{L}+\frac{\delta}{4L^2}+\frac{\epsilon}{2\mu})$ where $\delta>2L\epsilon$ and $\epsilon$ is the $\epsilon$-interpolation parameter $\epsilon>\max_i[ f_i(x^*)-f_i^*]$ which by definition is bigger than $\sigma^2$. Under interpolation where $\sigma=0$, our method converges linearly to the $x^*$ while the algorithm proposed by~\citet{berrada2019training} still converges to a neighborhood that is proportional to the parameter $\delta$.

\subsection{Proof of Theorem~\ref{TheoremConvex}}
\begin{proof}
\begin{eqnarray}
\label{noiaks2}
\|x^{k+1}-x^*\|^2&=&\|x^k-\gamma_k \nabla f_i(x^k)-x^*\|^2\notag\\
&=&\|x^k-x^*\|^2-2 \gamma_k \langle x^k-x^*, \nabla f_i(x^k) \rangle + \gamma_k^2 \| \nabla f_i(x^k)\|^2\notag\\
&\overset{convexity}{\leq}&\|x^k-x^*\|^2-2 \gamma_k \left[f_i(x^k)-f_i(x^*)\right] + \gamma_k^2 \| \nabla f_i(x^k)\|^2\notag\\
&\overset{\eqref{cakslaasasa}}{\leq}&\|x^k-x^*\|^2-2 \gamma_k \left[f_i(x^k)-f_i(x^*)\right] + \frac{\gamma_k }{c}\left[f_i(x^k)-f_i^*\right]\notag\\
&=&\|x^k-x^*\|^2-2 \gamma_k \left[f_i(x^k)-f_i^*+f_i^*-f_i(x^*)\right] + \frac{\gamma_k }{c}\left[f_i(x^k)-f_i^*\right]\notag\\
&=&\|x^k-x^*\|^2- \gamma_k\left(2-\frac{1}{c}\right) \underbrace{\left[f_i(x^k)-f_i^*\right]}_{>0} + 2 \gamma_k \underbrace{\left[f_i(x^*)-f_i^*\right]}_{>0}
\end{eqnarray}

Let $\alpha=\min\left\{\frac{1}{2cL_{\max}},\gamma_{\bound}\right\}$ and recall that from the definition of 
$\text{SPS}_{\max}$ \eqref{SPLRmax} we obtain:
\begin{align}
\label{boundsaojsnda2}
\alpha \overset{\eqref{NewBounds},\, \eqref{SPLRmax}}{ \leq} \gamma_{k} \leq \gamma_{\bound}
\end{align}
From the above if $\alpha = \frac{1}{2cL_{\max}}$ then the step-size is in the regime of the stochastic Polyak step \eqref{NewBounds}. In the case that $\alpha = \gamma_{\bound}$ then the analyzed method becomes the constant step-size SGD with stepsize $\gamma_{k} =  \gamma_{\bound}$.

Since $c> \frac{1}{2}$ it holds that $\left(2-\frac{1}{c}\right) >0$. Using \eqref{boundsaojsnda2} into \eqref{noiaks2} we obtain:
\begin{eqnarray}
\|x^{k+1}-x^*\|^2&\leq&\|x^k-x^*\|^2- \gamma_k\left(2-\frac{1}{c}\right)\left[f_i(x^k)-f_i^*\right]+ 2 \gamma_k \left[f_i(x^*)-f_i^*\right] \notag\\
&\overset{\eqref{boundsaojsnda2}}{\leq}&\|x^k-x^*\|^2- \alpha \left(2-\frac{1}{c}\right) \left[f_i(x^k)-f_i^*\right] + 2 \gamma_{\bound} \left[f_i(x^*)-f_i^*\right]\notag\\
&=&\|x^k-x^*\|^2- \alpha \left(2-\frac{1}{c}\right) \left[f_i(x^k)-f_i(x^*)+f_i(x^*)-f_i^*\right] \notag\\ && + 2 \gamma_{\bound} \left[f_i(x^*)-f_i^*\right]\notag\\
&=&\|x^k-x^*\|^2- \alpha \left(2-\frac{1}{c}\right) \left[f_i(x^k)-f_i(x^*)\right]- \alpha \left(2-\frac{1}{c}\right) \left[f_i(x^*)-f_i^*\right] \notag\\ && + 2 \gamma_{\bound} \left[f_i(x^*)-f_i^*\right]\notag\\
&\leq&\|x^k-x^*\|^2- \alpha \left(2-\frac{1}{c}\right) \left[f_i(x^k)-f_i(x^*)\right]+ 2 \gamma_{\bound} \left[f_i(x^*)-f_i^*\right]
\end{eqnarray}
where in the last inequality we use that $\alpha \left(2-\frac{1}{c}\right) \left[f_i(x^*)-f_i^*\right]>0$.

By rearranging:
\begin{eqnarray}
\alpha \left(2-\frac{1}{c}\right) \left[f_i(x^k)-f_i(x^*)\right]
&\leq&\|x^k-x^*\|^2-\|x^{k+1}-x^*\|^2 + 2 \gamma_{\bound} \left[f_i(x^*)-f_i^*\right]
\end{eqnarray}

By taking expectation condition on $x^k$ and dividing by $\alpha \left(2-\frac{1}{c}\right) $:
\begin{eqnarray*}
\label{bcaiujskaoi7}
f(x^k)-f(x^*) &\leq& \frac{c}{\alpha (2c-1)} \left(\|x^k-x^*\|^2- \Exp_i\|x^{k+1}-x^*\|^2 \right)+ 2\gamma_{\bound}\frac{c}{\alpha (2c-1)}\sigma^2 
\end{eqnarray*}
Taking expectation again and using the tower property:
\begin{eqnarray*}
\label{bcaiujskaoi6}
\Exp[f(x^k)-f(x^*)]
&\leq& \frac{c}{\alpha (2c-1)} \left(\Exp\|x^k-x^*\|^2- \Exp\|x^{k+1}-x^*\|^2 \right)+ 2\gamma_{\bound}\frac{c}{\alpha (2c-1)}\sigma^2 
\end{eqnarray*}

Summing from $k=0$ to $K-1$ and dividing by $K$:
\begin{eqnarray}
 \frac{1}{K}\sum_{k=0}^{K-1} \Exp\left[f(x^k)-f(x^*)\right]
&=& \frac{c}{\alpha (2c-1)}\frac{1}{K}\sum_{k=0}^{K-1} \left(\Exp\|x^k-x^*\|^2 -\Exp\|x^{k+1}-x^*\|^2 \right) \notag\\ && + \frac{1}{K}\sum_{k=0}^{K-1} \frac{2c\gamma_{\bound}  \sigma^2}{\alpha(2c-1)} \notag\\
&=& \frac{2c}{\alpha (2c-1)}\frac{1}{K}\ \left[\|x^0-x^*\|^2-\Exp\|x^{K}-x^*\|^2\right]+\frac{2c\gamma_{\bound}  \sigma^2}{\alpha(2c-1)} \notag\\
&\leq& \frac{c}{\alpha (2c-1)}\frac{1}{K} \|x^0-x^*\|^2+\frac{2c\gamma_{\bound}  \sigma^2}{\alpha(2c-1)}
\end{eqnarray}

Let $\bar{x}^K=\frac{1}{K}\sum_{k=0}^{K-1} x^k$, then:
\begin{eqnarray*}
\label{nkoaclmadsmakl}
\Exp \left[f(\bar{x}^K)-f(x^*)\right] \overset{Jensen}{\leq} \frac{1}{K} \sum_{k=0}^{K-1} \Exp \left[f(x^k)-f(x^*)\right]
&\leq& \frac{c}{\alpha (2c-1)}\frac{1}{K} \|x^0-x^*\|^2+\frac{2c\gamma_{\bound}  \sigma^2}{\alpha(2c-1)}
\end{eqnarray*}
For $c=1$:
\begin{eqnarray}
\Exp \left[f(\bar{x}^K)-f(x^*)\right] 
&\leq& \frac{ \|x^0-x^*\|^2}{\alpha K}+\frac{2\gamma_{\bound}  \sigma^2}{\alpha} 
\end{eqnarray}
and this completes the proof. 
\end{proof}

At this point we highlight that $c=1$ is selected to simplify the expression of the upper bound in \eqref{nkoaclmadsmakl}. This is not the optimum choice (the one that makes the rate and the neighborhood of the upper bound smaller). In order to compute the optimum value of $c$ one needs to follow similar procedure to \citet{gower2019sgd} and \citet{needell2014stochastic}. In this case $c$ will depend on parameter $\sigma$ and the desired accuracy $\epsilon$ of convergence.

However as we show bellow having $c=1$ allows SGD with SPS to convergence faster than the ALI-G algorithm \citep{berrada2019training} and the SLS algorithm~\citep{vaswani2019painless} for the case of smooth convex functions.

\paragraph{Comparison with other methods} Similar to the strongly convex case let us compare the above convergence for smooth convex functions with the convergence rates proposed in~\citet{vaswani2019painless} and \citet{berrada2019training}.

For the smooth convex functions,~\citet{berrada2019training} show the linear convergence to a much larger neighborhood than ours and with slower rate. In particular, their rate is $\frac{1}{K}\left(\frac{2L}{1-\frac{2L\epsilon}{\delta}}\right)$ and the neighborhood is $\frac{\delta}{L(1-\frac{2L\epsilon}{\delta})}$ where $\delta>2L\epsilon$ and $\epsilon$ is the $\epsilon$-interpolation parameter $\epsilon>\max_i[ f_i(x^*)-f_i^*]$ which by definition is bigger than $\sigma^2$. Under interpolation where $\sigma=0$, our method converges with a $O(1/K)$ rate to the $x^*$ while the algorithm proposed by~\citet{berrada2019training} still converges to a neighborhood that is proportional to the parameter $\delta$.

In the interpolation setting our rate is similar to the one obtain for the stochastic line search (SLS) proposed in \citet{vaswani2019painless}. In particular in the interpolation setting, SLS achieves the following $O(1/K)$ rate $\Exp \left[f(\bar{x}^K)-f(x^*)\right] \leq \frac{ \max \{3 L_{\max}, 2/\gamma_{\bound}\}}{ K} \|x^0-x^*\|^2 $ which has slightly worse constants than SGD with SPS.
\subsection{SPS on Methods for Solving Consistent Linear Systems}
\label{LinearSystemsDetails}

Recently several new randomized iterative methods (sketch and project methods) for solving large-scale linear systems have been proposed \citep{ASDA,loizou2017momentum,loizou2019convergence,gower2015randomized}. The main algorithm in this literature is the celebrated randomized Kaczmarz (RK) method \citep{kaczmarz1937angenaherte, RK} which can be seen as special case of SGD for solving least square problems \citep{needell2014stochastic}.  In this area of research, it is well known that the theoretical best constant step-size for RK method is $\gamma=1$.

As we have already mentioned in Section~\ref{ConsistentSystems}, given the consistent linear system 
\begin{equation}
\label{THELinearSystem}
\bA x=b,
\end{equation}
 \citet{ASDA} provide a stochastic optimization reformulation of the form \eqref{MainProb} which is equivalent to the linear system in the sense that their solution sets are identical. That is, the set of minimizers of the stochastic optimization problem $\cX^*$ is equal to the set of solutions of the stochastic linear system $\cL \eqdef \{x : \bA x=b \}$.  

In particular, the stochastic convex quadratic optimization problem proposed in \citet{ASDA}, can be expressed as follows:
\begin{equation}
\label{StochReform} 
\min_{x\in \R^n} f(x) \eqdef \Exp_{\bS \sim \cD} {f_{\mS}(x)}.
\end{equation}
Here the expectation is over random matrices $\mS$ drawn from an arbitrary, user defined, distribution $\cD$ and $f_{\mS}$ is a stochastic convex quadratic function of a  least-squares type, defined as 
 \begin{equation}
 \label{f_s}
 f_{\mS}(x) \eqdef \frac{1}{2}\|\mA x - b\|_{\mH}^2 = \frac{1}{2}(\mA x - b)^\top \mH (\mA x - b).
 \end{equation} 
Function  $f_{\mS}$ depends on the matrix $\mA\in \R^{m\times n}$ and vector $b\in \R^m$ of the linear system \eqref{THELinearSystem} and on a random symmetric positive semidefinite matrix  
$\mH \eqdef  \mS (\mS^\top \mA \mA^\top \mS)^\dagger \mS^\top.$
By $\dagger$ we denote the Moore-Penrose pseudoinverse.  

For solving problem~\eqref{StochReform}, \citet{ASDA} analyze SGD with constant step-size:
\begin{equation}
\label{SGD_ILinearSystems}
x^{k+1} = x^k - \gamma\nabla f_{\mS_k}(x^k),
\end{equation}
where $\nabla f_{\mS_k}(x^k)$ denotes the gradient of function $f_{\mS_k}$. In each step the matrix $\mS_k$ is drawn from the given distribution $\cD$.

The above update of SGD is quite general and as explained by \citet{ASDA} the flexibility of selecting distribution $\cD$ allow us to  obtain different stochastic reformulations of the linear system \eqref{THELinearSystem} and different special cases of the SGD update. For example the celebrated randomized Kaczmarz (RK) method can be seen as special cases of the above update as follows:

\textit{Randomized Kaczmarz Method:} Let pick in each iteration the random matrix $\bS=e_i$ (random coordinate vector) with probability $p_i=\|\bA_{i:}\|^2 / \|\bA\|_F^2$. In this setup the update rule of SGD \eqref{SGD_ILinearSystems} simplifies to $$x^{k+1}=x^k -\omega \frac{\bA_{i:} x^k -b_i}{\|\bA_{i:}\|^2} \bA_{i:}^ \top$$
Many other methods like Gaussian Kacmarz, Randomized Coordinate Descent, Gaussian Decsent and their block variants can be cast as special cases of the above framework. 
For more details on the general framework and connections with other research areas we also suggest \citep{loizou2019revisiting,loizou2019randomized}.

\begin{lemma}[Properties of stochastic reformulation \citep{ASDA}]
For all $x \in \R^n$ and any $\bS \sim \cD$ it holds that:
\begin{equation}
\label{normbound}
f_{\bS}(x)-f_{\bS}(x^*)\overset{f_{\bS}^*=0}{=}f_{\bS}(x) =\frac{1}{2}\|\nabla f_{\bS}(x)\|^2_{\bB}=\frac{1}{2}\langle \nabla f_{\bS}(x),x-x^* \rangle_{\bB}.
\end{equation}
Let $x^*$ is the projection of vector $x$ onto the solution set $\cX^*$ of the optimization problem $\min_{x \in \R^n} f(x)$ (Recall that by the construction of the stochastic optimization problems we have that $\cX^*=\cL$). Then: 
\begin{equation}
\label{b3}
\frac{\lambda_{\min}^+(\bW)}{2} \|x-x^*\|^2_{\bB} \leq  f(x)  .
\end{equation}
 where $\lambda_{\min}^+$ denotes the minimum non-zero eigenvalue of matrix $\bW=\Exp[\bA^\top \bH \bA]$.
\end{lemma}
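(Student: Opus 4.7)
The plan is to verify the three equalities in \eqref{normbound} and the inequality \eqref{b3} by direct algebraic manipulation, leveraging two key facts: (i) since $x^* \in \cL$, we have $\bA x^* = b$, which immediately gives $f_{\bS}(x^*) = 0$; and (ii) the matrix $\bH = \bS(\bS^\top \bA \bA^\top \bS)^\dagger \bS^\top$ satisfies the pseudoinverse-type identity $\bH \bA \bA^\top \bH = \bH$, which is standard from the definition of the Moore--Penrose pseudoinverse applied to the Gram-like quantity $\bS^\top \bA \bA^\top \bS$.

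First I would compute $\nabla f_{\bS}(x) = \bA^\top \bH (\bA x - b)$ directly from \eqref{f_s}. The first equality $f_{\bS}(x) - f_{\bS}(x^*) = f_{\bS}(x)$ is then immediate. For the second equality, I would expand $\|\nabla f_{\bS}(x)\|^2_{\bB}$ and collapse the middle factor using $\bH \bA \bA^\top \bH = \bH$ to arrive at $(\bA x - b)^\top \bH (\bA x - b) = 2 f_{\bS}(x)$. The third equality follows by a similar substitution: expanding $\langle \nabla f_{\bS}(x), x - x^* \rangle_{\bB}$ and using $\bA x^* = b$ to rewrite $\bA(x - x^*) = \bA x - b$, we again obtain $(\bA x - b)^\top \bH (\bA x - b) = 2 f_{\bS}(x)$.

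For \eqref{b3}, I would take expectations to obtain the closed-form quadratic $f(x) = \tfrac{1}{2}(x - x^*)^\top \bW (x - x^*)$, since $\Exp[\bA^\top \bH (\bA x - b)] = \bW (x - x^*)$ after substituting $b = \bA x^*$. The claim then reduces to the linear-algebraic fact that for any $v$ orthogonal (in the appropriate inner product) to $\mathrm{Null}(\bW)$, one has $v^\top \bW v \geq \lambda_{\min}^+(\bW)\, \|v\|_{\bB}^2$. To apply this with $v = x - x^*$, I would use that $x^*$ is the $\bB$-projection of $x$ onto $\cX^*$ and that $\cX^* = x^* + \mathrm{Null}(\bW)$; by the defining property of projections, $x - x^*$ is $\bB$-orthogonal to $\mathrm{Null}(\bW)$, placing it in the range of $\bW$ relative to the $\bB$-geometry.

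The main obstacle I anticipate is the last linear-algebra step: establishing cleanly that $x - x^*$ lies in the ``correct'' subspace so that the \emph{minimum nonzero} eigenvalue may be invoked. This requires being careful that $\lambda_{\min}^+(\bW)$ is understood with respect to the $\bB$-inner product (i.e., as the smallest nonzero eigenvalue of $\bB^{-1/2} \bW \bB^{-1/2}$, or equivalently of the pencil $(\bW,\bB)$), and that the identification $\cX^* = x^* + \mathrm{Null}(\bW)$ is justified from the construction of the stochastic reformulation. Once these ingredients are in place, the bound $f(x) \geq \tfrac{\lambda_{\min}^+(\bW)}{2}\|x - x^*\|_{\bB}^2$ follows by a standard Rayleigh-quotient argument restricted to the range of $\bW$.
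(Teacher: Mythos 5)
Your proposal is correct, and it is essentially the standard argument from \citet{ASDA} that this paper simply imports (the lemma is stated here without proof): the chain of equalities follows from $\nabla f_{\bS}(x)=\bA^\top\bH(\bA x-b)$, the pseudoinverse identity $\bH\bA\bA^\top\bH=\bH$, and $\bA x^*=b$, while \eqref{b3} follows from $f(x)=\tfrac{1}{2}(x-x^*)^\top\bW(x-x^*)$ together with the orthogonality of $x-x^*$ to $\kernel{\bW}$. The one step you flag as a potential obstacle is in fact immediate: since $f$ is the PSD quadratic above, its minimizer set is exactly $\{x:\bW(x-x^*)=0\}=x^*+\kernel{\bW}$, so the projection property gives $x-x^*\perp\kernel{\bW}$ and the Rayleigh-quotient bound with $\lambda_{\min}^+(\bW)$ applies; your caveat about interpreting $\lambda_{\min}^+$ and the norm with respect to the $\bB$-geometry is reasonable, though this paper effectively takes $\bB=\bI$ (it never defines $\bB$ and drops it in the subsequent proof of the linear-systems theorem), in which case your argument closes with no further care needed.
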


As we will see in the next Theorem, using the special structure of the stochastic reformulation \eqref{StochReform}, SPS \eqref{SPLR} with $c=1/2$ takes the following form:
$$\gamma_k \overset{\eqref{SPLR}}{=} \frac{2\left[f_{\bS}(x^k)-f_{\bS}^*\right]}{\|\nabla f_{\bS}(x^k)\|^2}\overset{\eqref{normbound}}{=}1,$$
which is the theoretically optimal constant step-size for SGD in this setting \citep{ASDA}. This reduction implies that SPS results in an optimal convergence rate when solving consistent linear systems. We provide the convergence rate for SPS in the next Theorem.

Though a straight forward verification of the optimality of SPS, we believe that this is the first time that SGD with adaptive step-size is reduced to constant step-size when is used for solving linear systems. SPS does that by obtaining the best convergence rate in this setting.

\begin{theorem}
\label{niaoklsaos}
Let $\bA x=b$ be a consistent linear system and let $x^*$ is the projection of vector $x$ onto the solution set $\cX^*=\cL$. Then the SGD with SPS \eqref{SPLR} with $c=1/2$ for solving the stochastic optimization reformulation \eqref{StochReform} satisfies:
 \begin{eqnarray}
 \label{convergenceSGD}
\Exp \|x^{k}-x^*\|^2
 \leq\left(1-\lambda_{\min}^+ (\bW) \right)^k \|x^0-x^*\|^2 
 \end{eqnarray}
 where $\lambda_{\min}^+$ denotes the minimum non-zero eigenvalue of matrix $\bW=\Exp[\bA^\top \bH \bA]$. 
 \end{theorem}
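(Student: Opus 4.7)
The plan is to exploit the two special structural properties of the stochastic reformulation recorded in \eqref{normbound} and \eqref{b3}. The first one says $f_{\bS}(x)=\tfrac12\|\nabla f_{\bS}(x)\|_{\bB}^2=\tfrac12\langle \nabla f_{\bS}(x), x-x^*\rangle_{\bB}$ and $f_{\bS}^*=0$; the second one is a one-sided PL-type lower bound on the full objective $f$.

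First I would simply plug these identities into the SPS formula. With $c=1/2$ and $f_{\bS}^*=0$, the numerator equals $f_{\bS}(x^k)=\tfrac12\|\nabla f_{\bS}(x^k)\|_{\bB}^2$ while the denominator is $\tfrac12\|\nabla f_{\bS}(x^k)\|_{\bB}^2$, so $\gamma_k\equiv 1$. Hence the update reduces to $x^{k+1}=x^k-\nabla f_{\bS_k}(x^k)$, i.e.\ SGD with the theoretically optimal constant step-size for the sketch-and-project framework.

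Next I would carry out the standard one-step expansion in the $\bB$-norm:
\begin{align*}
\|x^{k+1}-x^*\|_{\bB}^2 &= \|x^k-x^*\|_{\bB}^2 - 2\langle \nabla f_{\bS_k}(x^k),\, x^k-x^*\rangle_{\bB} + \|\nabla f_{\bS_k}(x^k)\|_{\bB}^2.
\end{align*}
Here is where \eqref{normbound} does all the work: it lets me replace the inner product by $2f_{\bS_k}(x^k)$ and the squared-norm by $2f_{\bS_k}(x^k)$, so the cross term and the quadratic term combine to $-2f_{\bS_k}(x^k)$. Taking conditional expectation with respect to $\bS_k$ yields
\begin{align*}
\Exp\bigl[\|x^{k+1}-x^*\|_{\bB}^2\,\big|\,x^k\bigr] &= \|x^k-x^*\|_{\bB}^2 - 2 f(x^k).
\end{align*}

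To close the recursion I would invoke \eqref{b3}, which gives $2f(x^k)\geq \lambda_{\min}^+(\bW)\,\|x^k-x^*\|_{\bB}^2$, yielding the per-step contraction $\Exp[\|x^{k+1}-x^*\|_{\bB}^2\mid x^k]\leq (1-\lambda_{\min}^+(\bW))\|x^k-x^*\|_{\bB}^2$. Taking total expectations, applying the tower property, and iterating $k$ times produces the claimed bound. There is no real obstacle here: all the difficulty is hidden in the stochastic reformulation machinery; given \eqref{normbound} and \eqref{b3}, the only subtle point is noticing that the adaptive SPS step collapses to the optimal constant step $\gamma=1$, after which the proof is essentially a two-line computation plus the PL-style lower bound.
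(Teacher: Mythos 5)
Your proposal is correct and follows essentially the same route as the paper's proof: the one-step expansion of $\|x^{k+1}-x^*\|^2$, the use of \eqref{normbound} to reduce the cross and quadratic terms to $-2f_{\bS_k}(x^k)$ (equivalently, recognizing that SPS with $c=1/2$ collapses to the optimal constant step $\gamma_k=1$), conditional expectation, the quadratic growth bound \eqref{b3}, and unrolling via the tower property. The only cosmetic difference is that you fix $\gamma_k=1$ up front, whereas the paper substitutes the step-size as the minimizer of the right-hand side and then identifies it with SPS via \eqref{normbound}; the computations are identical.
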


\begin{proof}
\begin{eqnarray}
\label{x_k_omega}
\|x^{k+1}-x^*\|^2 & \overset{\eqref{SGD_ILinearSystems}}{=}& \|x^k-\gamma_k \nabla f_{\bS_k}(x^k)-x^*\|^2\notag\\
&=& \|x^k-x^*\|^2-2 \gamma_k \langle x^k-x^*,\nabla f_{\bS_k}(x^k) \rangle +\gamma_k^2 \|\nabla f_{\bS_k}(x^k)\|^2
\end{eqnarray}
Let us select $\gamma_k$ such that the RHS of inequality~\eqref{x_k_omega} is minimized. That is, let us select:
$$\gamma_k= \frac{\langle x^k-x^*,\nabla f_{\bS_k}(x^k) \rangle}{\|\nabla f_{\bS_k}(x^k)\|^2}\overset{\eqref{normbound}}{=}\frac{2\left[f_{\bS}(x)-f_{\bS}(x^*)\right]}{\|\nabla f_{\bS_k}(x^k)\|^2}$$

Substitute this step-size to \eqref{x_k_omega} we obtain:
\begin{eqnarray}
\|x^{k+1}-x^*\|^2
&\leq& \|x^k-x^*\|^2-2 \frac{\langle x^k-x^*,\nabla f_{\bS_k}(x^k) \rangle}{\|\nabla f_{\bS_k}(x^k)\|^2} \langle x^k-x^*,\nabla f_{\bS_k}(x^k) \rangle \notag\\ && +\left[\frac{\langle x^k-x^*,\nabla f_{\bS_k}(x^k) \rangle}{\|\nabla f_{\bS_k}(x^k)\|^2}\right]^2 \|\nabla f_{\bS_k}(x^k)\|^2\notag\\
&=& \|x^k-x^*\|^2-2 \frac{ [\langle x^k-x^*,\nabla f_{\bS_k}(x^k) \rangle]^2}{\|\nabla f_{\bS_k}(x^k)\|^2}  +\frac{\left[\langle x^k-x^*,\nabla f_{\bS_k}(x^k) \rangle\right]^2}{\|\nabla f_{\bS_k}(x^k)\|^2}\notag\\
&=& \|x^k-x^*\|^2- \frac{ [\langle x^k-x^*,\nabla f_{\bS_k}(x^k) \rangle]^2}{\|\nabla f_{\bS_k}(x^k)\|^2} \notag\\
&\overset{\eqref{normbound}}{=}& \|x^k-x^*\|^2- 2f_{\bS}(x^k)
\end{eqnarray}

By taking expectation with respect to $\mS_k$ and using quadratic growth inequality \eqref{b3}:
\begin{eqnarray}
\label{exp_x_k_omega}
\Exp_{\mS_k}[\|x^{k+1}-x^*\|^2] &=& \|x^k-x^*\|^2-2f(x^k)\notag\\
&\overset{\eqref{b3}}{\leq}& \|x^k-x^*\|^2- \lambda_{\min}^+(\bW)  \|x^k-x^*\|^2\notag\\
&=&\left[1-\lambda_{\min}^+(\bW) \right] \|x^k-x^*\|^2.
\end{eqnarray}
Taking expectation again and by unrolling the recurrence we obtain \eqref{convergenceSGD}.
\end{proof}
We highlight that the above proof provides a different viewpoint on the analysis of the optimal constant step-size for the sketch and project methods for solving consistent liner systems. The expression of Theorem~\ref{niaoklsaos} is the same with the one proposed in~\citet{ASDA}.

\subsection{Proof of Theorem~\ref{TheoremNonConvexPL}}
\begin{proof}
By the smoothness of function $f$ we have that 
$$f(x^{k+1}) \leq f(x^k) + \langle\nabla f(x^k) , x^{k+1}-x^k \rangle + \frac{L}{2}\|x^{k+1}-x^k\|^2.$$
Combining this with the update rule of SGD we obtain:
\begin{eqnarray}
f(x^{k+1})& \leq & f(x^k) + \left\langle\nabla f(x^k) , x^{k+1}-x^k \right\rangle + \frac{L}{2}\|x^{k+1}-x^k\|^2\notag\\
& = & f(x^k) - \gamma_k \left\langle\nabla f(x^k) , \nabla f_i(x^k) \right\rangle + \frac{L \gamma_k^2}{2}\|\nabla f_i(x^k)\|^2 
\end{eqnarray}

By rearranging:
\begin{eqnarray*}
\frac{f(x^{k+1})-f(x^k) }{\gamma_k}& \leq & - \left\langle\nabla f(x^k) , \nabla f_i(x^k) \right\rangle + \frac{L \gamma_k}{2}\|\nabla f_i(x^k)\|^2 \notag\\
& \overset{\eqref{cakslaasasa}}{\leq} &- \left\langle\nabla f(x^k) , \nabla f_i(x^k) \right\rangle + \frac{L}{2c}\left[ f_i(x^k)-f_i^*\right]\notag\\
&= &- \left\langle\nabla f(x^k) , \nabla f_i(x^k) \right\rangle + \frac{L}{2c}\left[ f_i(x^k)- f_i(x^*)\right]+\frac{L}{2c}\left[ f_i(x^*)-f_i^*\right]
\end{eqnarray*}
and by taking expectation condition on $x^k$:
\begin{eqnarray*}
\Exp_i\left[\frac{f(x^{k+1})-f(x^k) }{\gamma_k}\right]& \leq & - \left\langle\nabla f(x^k) , \nabla f(x^k) \right\rangle + \frac{L}{2c}\left[ f(x^k)- f(x^*)\right]+\frac{L}{2c}\Exp_i\left[ f_i(x^*)-f_i^*\right] \notag\\
&\overset{\eqref{eq:sigma_def2}}{\leq}&- \|\nabla f(x^k)\|^2 + \frac{L}{2c}\left[ f(x^k)- f(x^*)\right]+\frac{L}{2c}\sigma^2
\notag\\
&\overset{\eqref{PLcondition}}{\leq}&- 2\mu \left[ f(x^k)- f(x^*)\right] + \frac{L}{2c}\left[ f(x^k)- f(x^*)\right]+\frac{L}{2c}\sigma^2
\end{eqnarray*}
Let $\alpha= \min\left\{\frac{1}{2cL_{\max}},\gamma_{\bound}\right\}$. Then,
\begin{eqnarray}
\Exp_i\left[\frac{f(x^{k+1})-f(x^*) }{\gamma_k}\right]& \leq &\Exp_i\left[\frac{f(x^{k})-f(x^*) }{\gamma_k}\right]- 2\mu \left[ f(x^k)- f(x^*)\right] \notag\\ && + \frac{L}{2c}\left[ f(x^k)- f(x^*)\right]+\frac{L}{2c}\sigma^2\notag\\
&\overset{\eqref{NewBounds},\eqref{SPLRmax}}{\leq}&\frac{1}{\alpha}\left[f(x^{k})-f(x^*) \right]- 2\mu \left[ f(x^k)- f(x^*)\right] \notag\\ && + \frac{L}{2c}\left[ f(x^k)- f(x^*)\right]+\frac{L}{2c}\sigma^2\notag\\
&=&\left(\frac{1}{\alpha}- 2\mu  + \frac{L}{2c}\right)\left[ f(x^k)- f(x^*)\right]+\frac{L}{2c}\sigma^2\notag\\
&\overset{L\leq L_{\max}}{=}&\left(\frac{1}{\alpha}- 2\mu  + \frac{L_{\max}}{2c}\right)\left[ f(x^k)- f(x^*)\right]+\frac{L}{2c}\sigma^2
\end{eqnarray}
Using $\gamma_k\leq\gamma_{\bound}$ and by taking expectations again:
\begin{eqnarray}
\label{oakslxa}
\Exp\left[f(x^{k+1})-f(x^*)\right]& \leq & \underbrace{\gamma_{\bound} \left(\frac{1}{\alpha}- 2\mu  + \frac{L_{\max}}{2c}\right)}_{\nu}\Exp \left[ f(x^k)- f(x^*)\right]+\frac{L\sigma^2\gamma_{\bound}}{2c}
\end{eqnarray}

By having  $\nu \in (0, 1]$ and by recursively applying the above and summing the resulting geometric series we obtain:
\begin{eqnarray}
\Exp\left[f(x^k)-f(x^*)\right]& \leq & \nu^k  \left[ f(x^0)- f(x^*)\right]+\frac{L\sigma^2\gamma_{\bound}}{2c} \sum_{j=0}^{k-1}\nu^j\notag\\
& \leq &\nu^k  \left[ f(x^0)- f(x^*)\right]+\frac{L\sigma^2\gamma_{\bound}}{2(1-\nu)c}
\end{eqnarray}

In the above result we require that $0 < \nu=\gamma_{\bound}\left(\frac{1}{\alpha}- 2\mu  + \frac{L_{\max}}{2c}\right) \leq 1$. In order for this to hold we need to make extra assumptions on the values of $\gamma_{\bound}$ and parameter $c$. This is what we do next.

Let us divide the analysis into two cases based on the value of parameter $\alpha$. That is:
\begin{itemize}
\item(i) If $\frac{1}{2cL_{\max}}\leq \gamma_{\bound}$ then, 
$$\alpha= \min\left\{\frac{1}{2cL_{\max}},\gamma_{\bound}\right\}=\frac{1}{2cL_{\max}} \quad \text{and} \quad \nu=\gamma_{\bound}\left((2c+\frac{1}{2c})L_{\max}- 2\mu\right).$$
By preliminary computations, it can be easily shown that $\nu>0$ for every $c\geq0$. However for $\nu\leq1$ we need to require that  $\gamma_{\bound} \leq \frac{1}{\left(\frac{1}{\alpha}- 2\mu  + \frac{L_{\max}}{2c}\right)}$ and since we are already assume that $\frac{1}{2cL_{\max}}\leq \gamma_{\bound}$ we need to force 
$$\frac{1}{2cL_{\max}}\leq\frac{1}{\left(\frac{1}{\alpha}- 2\mu  + \frac{L_{\max}}{2c}\right)}$$
to avoid contradiction. This is true only if $c>\frac{L_{\max}}{4\mu}$ which is the assumption of Theorem~\ref{TheoremNonConvexPL}.
\item (ii) If $ \gamma_{\bound}\leq\frac{1}{2cL_{\max}}$ then, 
$$\alpha= \min\left\{\frac{1}{2cL_{\max}},\gamma_{\bound}\right\}=\gamma_{\bound} \quad \text{and} \quad \nu=\gamma_{\bound}\left(\frac{1}{\gamma_{\bound}}- 2\mu  + \frac{L_{\max}}{2c}\right)=1- 2\mu\gamma_{\bound}  + \frac{L_{\max}}{2c}\gamma_{\bound}.$$
Note that if we have $c>\frac{L_{\max}}{4\mu}$ (an assumption of Theorem~\ref{TheoremNonConvexPL}) it holds that $\nu\leq1$. In addition, by preliminary computations, it can be shown that $\nu>0$ if  $\gamma_{\bound} < \frac{2c}{4\mu c -L_{\max}}$.  Finally, for $c>\frac{L_{\max}}{4\mu}$ it holds that $\frac{1}{2cL_{\max}}\leq \frac{2c}{4\mu c -L_{\max}}$, and as a result $\nu>0$ for all $ \gamma_{\bound}\leq\frac{1}{2cL_{\max}}$.
\end{itemize}
By presenting the above cases on bound of $\nu$ we complete the proof.
\end{proof}

\begin{remark}
The expression of Corollary~\ref{CorollaryConstantStepPL} is obtained by simply use $c=\frac{L_{\max}}{2\mu}$ in the case (ii) of the above proof. In this case we have $\gamma \leq\frac{\mu}{L_{\max}^2}$ and $\nu=1-\mu \gamma$.
\end{remark}

\subsection{Proof of Theorem~\ref{TheoremNonConvex}}
\begin{proof}
First note that:
\begin{eqnarray}
\label{iaokalkdao1}
- \gamma_k \left\langle\nabla f(x^k) , \nabla f_i(x^k) \right\rangle&=&\frac{\gamma_k}{2} \|\nabla f_i(x^k)-\nabla f(x^k)\|^2 -\frac{\gamma_k}{2}  \|\nabla f_i(x^k)\|^2 -\frac{\gamma_k}{2}  \|\nabla f(x^k)\|^2 \notag\\
& \overset{\eqref{boundsaojsnda2}}{\leq} & \frac{\gamma_{\bound}}{2} \|\nabla f_i(x^k)-\nabla f(x^k)\|^2 -\frac{\alpha}{2}  \|\nabla f_i(x^k)\|^2 -\frac{\alpha}{2}  \|\nabla f(x^k)\|^2 \notag\\
&=& \frac{\gamma_{\bound}}{2} \|\nabla f_i(x^k)\|^2+\frac{\gamma_{\bound}}{2} \|\nabla f(x^k)\|^2 - \gamma_{\bound}\left\langle\nabla f(x^k) , \nabla f_i(x^k) \right\rangle \notag\\ && -\frac{\alpha}{2}  \|\nabla f_i(x^k)\|^2 -\frac{\alpha}{2}  \|\nabla f(x^k)\|^2 \notag\\
&=& \left(\frac{\gamma_{\bound}}{2}-\frac{\alpha}{2} \right)  \|\nabla f_i(x^k)\|^2+\left(\frac{\gamma_{\bound}}{2}-\frac{\alpha}{2} \right) \|\nabla f(x^k)\|^2 \notag\\ && - \gamma_{\bound}\left\langle\nabla f(x^k) , \nabla f_i(x^k) \right\rangle
\end{eqnarray}

By the smoothness of function $f$ we have that 
$$f(x^{k+1}) \leq f(x^k) + \langle\nabla f(x^k) , x^{k+1}-x^k \rangle + \frac{L}{2}\|x^{k+1}-x^k\|^2.$$
Combining this with the update rule of SGD we obtain:
\begin{eqnarray}
\label{iaokalkdao2}
f(x^{k+1})& \leq & f(x^k) + \left\langle\nabla f(x^k) , x^{k+1}-x^k \right\rangle + \frac{L}{2}\|x^{k+1}-x^k\|^2\notag\\
& \leq & f(x^k) - \gamma_k \left\langle\nabla f(x^k) , \nabla f_i(x^k) \right\rangle + \frac{L \gamma_k^2}{2}\|\nabla f_i(x^k)\|^2 \notag\\
& \overset{\eqref{SPLRmax}}{\leq} & f(x^k) - \gamma_k \left\langle\nabla f(x^k) , \nabla f_i(x^k) \right\rangle + \frac{L \gamma_{\bound}^2}{2}\|\nabla f_i(x^k)\|^2 \notag\\
& \overset{\eqref{iaokalkdao1}}{\leq} & f(x^k)+\left(\frac{\gamma_{\bound}}{2}-\frac{\alpha}{2} + \frac{L \gamma_{\bound}^2}{2}\right)  \|\nabla f_i(x^k)\|^2+\left(\frac{\gamma_{\bound}}{2}-\frac{\alpha}{2} \right) \|\nabla f(x^k)\|^2 \notag\\ && - \gamma_{\bound}\left\langle\nabla f(x^k) , \nabla f_i(x^k) \right\rangle 
\end{eqnarray}
By taking expectation condition on $x^k$:
\begin{eqnarray}
\Exp_if(x^{k+1})& \overset{\eqref{iaokalkdao2}}{\leq} & f(x^k)+\left(\frac{\gamma_{\bound}}{2}-\frac{\alpha}{2} + \frac{L \gamma_{\bound}^2}{2}\right)  \Exp_i\|\nabla f_i(x^k)\|^2+\left(\frac{\gamma_{\bound}}{2}-\frac{\alpha}{2} \right) \|\nabla f(x^k)\|^2 \notag\\ && - \gamma_{\bound}\left\langle\nabla f(x^k) , \Exp_i\nabla f_i(x^k) \right\rangle \notag\\
&=& f(x^k)+\left(\frac{\gamma_{\bound}}{2}-\frac{\alpha}{2} + \frac{L \gamma_{\bound}^2}{2}\right)  \Exp_i\|\nabla f_i(x^k)\|^2+\left(\frac{\gamma_{\bound}}{2}-\frac{\alpha}{2} \right) \|\nabla f(x^k)\|^2 \notag\\ && - \gamma_{\bound}\|\nabla f(x^k)\|^2 \notag\\
&=& f(x^k)+\left(\frac{\gamma_{\bound}}{2}-\frac{\alpha}{2} + \frac{L \gamma_{\bound}^2}{2}\right)  \Exp_i\|\nabla f_i(x^k)\|^2-\left(\frac{\gamma_{\bound}}{2}+\frac{\alpha}{2} \right) \|\nabla f(x^k)\|^2
\end{eqnarray}
Since $0<\alpha\leq \gamma_{\bound}$ we have that $\left(\frac{\gamma_{\bound}}{2}-\frac{\alpha}{2} + \frac{L \gamma_{\bound}^2}{2}\right)>0$. Thus, we are able to use \eqref{SGC}:
\begin{eqnarray*}
\Exp_if(x^{k+1})&\leq& f(x^k)+\left(\frac{\gamma_{\bound}}{2}-\frac{\alpha}{2} + \frac{L \gamma_{\bound}^2}{2}\right)  \Exp_i\|\nabla f_i(x^k)\|^2-\left(\frac{\gamma_{\bound}}{2}+\frac{\alpha}{2} \right) \|\nabla f(x^k)\|^2\notag\\
&\overset{\eqref{SGC}}{\leq} & f(x^k)+\left(\frac{\gamma_{\bound}}{2}-\frac{\alpha}{2} + \frac{L \gamma_{\bound}^2}{2}\right)  \left[\rho \|\nabla f(x)\|^2 +\delta \right]-\left(\frac{\gamma_{\bound}}{2}+\frac{\alpha}{2} \right) \|\nabla f(x^k)\|^2\notag\\
&=& f(x^k)+\frac{1}{2}\left[\left(\gamma_{\bound}-\alpha+L \gamma_{\bound}^2\right)\rho-\left(\gamma_{\bound}+\alpha\right)\right] \|\nabla f(x)\|^2 +\frac{1}{2}\left(\gamma_{\bound}-\alpha+L \gamma_{\bound}^2\right)\delta\notag\\
\end{eqnarray*}

By rearranging  and taking expectation again:
\begin{equation*}
\underbrace{\left[\left(\gamma_{\bound}+\alpha\right)-\left(\gamma_{\bound}-\alpha+L \gamma_{\bound}^2\right)\rho\right]}_{\zeta} \Exp\left[\|\nabla f(x)\|^2\right]\leq 2\left(\Exp [f(x^k)] -\Exp [f(x^{k+1})]\right)+\left(\gamma_{\bound}-\alpha+L \gamma_{\bound}^2\right)\delta\notag\\
\end{equation*}

If $\zeta>0$ then:
\begin{eqnarray}
 \Exp[\|\nabla f(x^k)\|^2] 
& \leq &\frac{2}{\zeta}\left(\Exp [f(x^k)]  -\Exp[f(x^{k+1})]\right)+\frac{\left(\gamma_{\bound}-\alpha+L \gamma_{\bound}^2\right)\delta}{\zeta}
\end{eqnarray}

By summing from $k=0$ to $K-1$ and dividing by $K$:
\begin{eqnarray}
 \frac{1}{K} \sum_{k=0}^{K-1}\Exp[\|\nabla f(x^k)\|^2] 
& \leq &\frac{2}{\zeta} \frac{1}{K} \sum_{k=0}^{K-1} \left(\Exp [f(x^k)]  -\Exp[f(x^{k+1})]\right)+\frac{1}{K} \sum_{k=0}^{K-1} \frac{\left(\gamma_{\bound}-\alpha+L \gamma_{\bound}^2\right)\delta}{\zeta}\notag\\
& \leq &\frac{2}{\zeta} \frac{1}{K} \left(f(x^0)  -\Exp[f(x^{K})]\right)+\frac{\left(\gamma_{\bound}-\alpha+L \gamma_{\bound}^2\right)\delta}{\zeta}\notag\\
& \leq &\frac{2}{\zeta K} \left(f(x^0)  -f(x^*)\right)+ \frac{\left(\gamma_{\bound}-\alpha+L \gamma_{\bound}^2\right)\delta}{\zeta}
\end{eqnarray}

In the above result we require that $\zeta=\left(\gamma_{\bound}+\alpha\right)-\left(\gamma_{\bound}-\alpha+L \gamma_{\bound}^2\right)\rho > 0$. In order for this to hold we need to make extra assumptions on the values of $\gamma_{\bound}$ and parameter $c$. This is what we do next.

Let us divide the analysis into two cases. That is:
\begin{itemize}
\item(i) If $\frac{1}{2cL_{\max}}\leq \gamma_{\bound}$ then $\alpha=\min\left\{\frac{1}{2cL_{\max}},\gamma_{\bound}\right\}=\frac{1}{2cL_{\max}}$ and 
$$\zeta=\left(\gamma_{\bound}+\alpha\right)-\left(\gamma_{\bound}-\alpha+L \gamma_{\bound}^2\right)\rho= \left(\gamma_{\bound}+\frac{1}{2cL_{\max}}\right)-\left(\gamma_{\bound}-\frac{1}{2cL_{\max}}+L \gamma_{\bound}^2\right)\rho.$$
By solving the quadratic expression of $\zeta$ with respect to $\gamma_{\bound}$, it can be easily shown that $\zeta>0$ if $$0<\gamma_{\bound}< \bar{\gamma_{\bound}}\eqdef \frac{-(\rho-1)+\sqrt{(\rho-1)^2+\dfrac{4L\rho(\rho+1)}{2cL_{\max}}}}{2L\rho}.$$ To avoid contradiction the inequality $\frac{1}{2cL_{\max}}<\bar{\gamma_{\bound}}$ needs to be true, where $\bar{\gamma_{\bound}}$ is the above upper bound of $\gamma_{\bound}$. This is the case of $c>\frac{L\rho}{4L_{\max}}$ which is the assumption of Theorem~\ref{TheoremNonConvex}.
\item (ii) If $ \gamma_{\bound}\leq\frac{1}{2cL_{\max}}$ then $\alpha=\min\left\{\frac{1}{2cL_{\max}},\gamma_{\bound}\right\}=\gamma_{\bound}$ and 
$$\zeta=\left(\gamma_{\bound}+\alpha\right)-\left(\gamma_{\bound}-\alpha+L \gamma_{\bound}^2\right)\rho=\left(\gamma_{\bound}+\gamma_{\bound}\right)-\left(\gamma_{\bound}-\gamma_{\bound}+L \gamma_{\bound}^2\right)\rho=2\gamma_{\bound}-L \gamma_{\bound}^2\rho$$
In this case, by preliminary computations, it can be shown that $\zeta>0$ if  $\gamma_{\bound} < \frac{2}{L \rho}$. For $c>\frac{L\rho}{4L_{\max}}$ it also holds that $\frac{1}{2cL_{\max}}<\frac{2}{L \rho}$.
\end{itemize}
\end{proof}

\subsubsection{Additional convergence result for nonconvex smooth functions: Assuming independence of step-size and stochastic gradient}
Let us now present an extra theoretical result in which we assume that the step-size $\gamma_k$ and the stochastic gradient $\nabla f_i(x^k)$ in each step are not correlated. Such assumption has been recently used to prove convergence of SGD with the AdaGrad step-size \citep{ward2019adagrad} and for the analysis of stochastic line search in \citet{vaswani2019painless}. From a technical viewpoint, we highlight that for the proofs in the non-convex setting, we use the lower and upper bound of SPS rather than its exact form. This is what allows us to use this independence.

Let us state the main Theorem with the extra condition of independence and present its proof.
\begin{theorem}
\label{TheoremNonConvexExtra}
Let $f$ and $f_i$ be smooth functions and assume that there exist $\rho, \delta>0$ such that the condition \eqref{SGC} is satisfied. Assuming independence of the step-size $\gamma_k$ and the stochastic gradient $\nabla f_{i_k}(x^k)$ at every iteration $k$, SGD with SPS$_{\max}$ with $c > \frac{\rho L}{4 L_{\max}}$ and $\gamma_{\bound} < \max \left\{\frac{2}{L\rho}, \frac{1}{\sqrt{\rho c \, L L_{\max}}}\right\}$ converges as:
\begin{align*}
\min_{k \in [K]}\Exp \|\nabla f(x^k)\|^2 \leq  \frac{f(x^0) - f(x^*)}{\alpha \, K} + \frac{L \delta \gamma_{\bound}^2}{2 \alpha} 
\end{align*}
where $\beta_1 = 1 - \frac{\rho c \, L L_{max} \, \gamma_{\bound}^{2}}{2}$ and $\beta_2 = 1 - \frac{\rho L \, \gamma_{\bound}}{2}$, $\alpha = \min \left\{\frac{\beta_1}{2 c L_{\max}}, \gamma_{\bound}\, \beta_2 \right\}$. 
\end{theorem}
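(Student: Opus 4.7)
The plan is to adapt the structure of the proof of Theorem~\ref{TheoremNonConvex}, but to exploit the independence assumption to decouple the step-size from the stochastic gradient directly in the descent inequality, which should allow me to avoid the Young-type splitting used in \eqref{iaokalkdao1} and thereby obtain the sharper constants $\beta_1,\beta_2$. First I would apply the smoothness descent lemma to $f(x^{k+1})$ and substitute the SGD update $x^{k+1}=x^k-\gamma_k\nabla f_{i_k}(x^k)$ to get
\[
f(x^{k+1})\le f(x^k)-\gamma_k\langle\nabla f(x^k),\nabla f_{i_k}(x^k)\rangle+\tfrac{L}{2}\gamma_k^2\|\nabla f_{i_k}(x^k)\|^2.
\]
Taking conditional expectation given $x^k$ and using the hypothesized independence of $\gamma_k$ and $\nabla f_{i_k}(x^k)$, the cross term factors as $\Exp[\gamma_k\mid x^k]\,\|\nabla f(x^k)\|^2$ and the quadratic term factors as $\Exp[\gamma_k^2\mid x^k]\,\Exp[\|\nabla f_{i_k}(x^k)\|^2\mid x^k]$. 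Then applying the growth condition \eqref{SGC} to the inner expectation produces
\[
\Exp[f(x^{k+1})\mid x^k]\le f(x^k)-\Bigl(\Exp[\gamma_k\mid x^k]-\tfrac{L\rho}{2}\Exp[\gamma_k^2\mid x^k]\Bigr)\|\nabla f(x^k)\|^2+\tfrac{L\delta}{2}\Exp[\gamma_k^2\mid x^k].
\]

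The central step is then to lower-bound the coefficient of $\|\nabla f(x^k)\|^2$ by $\alpha$. For this I would use the almost-sure sandwich $\frac{1}{2cL_{\max}}\le\gamma_k\le\gamma_{\bound}$ from \eqref{NewBounds} and \eqref{SPLRmax}, and argue in two cases matching the $\min$ defining $\alpha$. In the regime where the Polyak lower bound is active, I would couple $\Exp[\gamma_k\mid x^k]\ge\frac{1}{2cL_{\max}}$ with the bound $\Exp[\gamma_k^2\mid x^k]\le\gamma_{\bound}^2$ to recover a factor of the form $\beta_1/(2cL_{\max})$; in the regime where the upper cap is active, I would use $\Exp[\gamma_k^2\mid x^k]\le\gamma_{\bound}\Exp[\gamma_k\mid x^k]$ together with $\Exp[\gamma_k\mid x^k]\ge\min\{1/(2cL_{\max}),\gamma_{\bound}\}$ to recover the factor $\gamma_{\bound}\beta_2$. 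The noise term is bounded uniformly by $\tfrac{L\delta\gamma_{\bound}^2}{2}$. Requiring both extracted coefficients to be positive translates into $\gamma_{\bound}<\max\{2/(L\rho),1/\sqrt{\rho c L L_{\max}}\}$ (together with $c>\rho L/(4L_{\max})$ so that the case analysis is consistent), matching the stated hypotheses.

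After these steps the inequality reads $\alpha\|\nabla f(x^k)\|^2\le \Exp[f(x^k)-f(x^{k+1})\mid x^k]+\tfrac{L\delta\gamma_{\bound}^2}{2}$, at which point I would take total expectation via the tower property, telescope from $k=0$ to $K-1$, divide by $K$, use $f(x^K)\ge f(x^*)$, and bound the minimum by the average to obtain the claimed $O(1/K)$ rate with the additive $\tfrac{L\delta\gamma_{\bound}^2}{2\alpha}$ neighbourhood. The main obstacle is not the telescoping, which is routine, but the bookkeeping in the preceding step: one must manage $\Exp[\gamma_k\mid x^k]$ and $\Exp[\gamma_k^2\mid x^k]$ with two different bounding strategies so that both branches of the $\min$ in $\alpha$ emerge with the correct $\beta_1$ and $\beta_2$, and simultaneously verify that the admissible range on $c$ and $\gamma_{\bound}$ keeps both branches positive and consistent with each other.
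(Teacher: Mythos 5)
Your proposal is correct and follows essentially the same route as the paper's own proof: smoothness of $f$, the independence assumption to decouple $\gamma_k$ from $\nabla f_{i_k}(x^k)$, the growth condition \eqref{SGC}, the sandwich $\min\{\tfrac{1}{2cL_{\max}},\gamma_{\bound}\}\le\gamma_k\le\gamma_{\bound}$ to extract the coefficient $\alpha$, followed by telescoping and bounding the minimum by the average. Your bookkeeping via $\Exp[\gamma_k\mid x^k]$ and $\Exp[\gamma_k^2\mid x^k]$ is just a slightly more explicit rendering of the paper's pointwise bounds and produces the same coefficient $\min\{\tfrac{1}{2cL_{\max}},\gamma_{\bound}\}-\tfrac{L\rho\gamma_{\bound}^2}{2}$ as the paper's derivation (equal to $\gamma_{\bound}\beta_2$ in the capped regime, and matching the other branch of the stated $\alpha$ up to the paper's own minor factor discrepancy in $\beta_1$), with the identical case analysis yielding the conditions on $c$ and $\gamma_{\bound}$.
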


\begin{proof}
By the smoothness of function $f$ we have that 
$$f(x^{k+1}) \leq f(x^k) + \langle\nabla f(x^k) , x^{k+1}-x^k \rangle + \frac{L}{2}\|x^{k+1}-x^k\|^2.$$

Combining this with the update rule of SGD we obtain:
\begin{eqnarray}
f(x^{k+1})& \leq & f(x^k) + \left\langle\nabla f(x^k) , x^{k+1}-x^k \right\rangle + \frac{L}{2}\|x^{k+1}-x^k\|^2\notag\\
& \leq & f(x^k) - \gamma_k \left\langle\nabla f(x^k) , \nabla f_{i_k}(x^k) \right\rangle + \frac{L \gamma_k^2}{2}\|\nabla f_i(x^k)\|^2 
\end{eqnarray}
Taking expectations with respect to $i_k$ and noting that $\gamma_k$ is independent of $\nabla f_i(x^k)$ yields:
\begin{eqnarray}
\label{naklsoama30}
\Exp_{i_k}f(x^{k+1})& \leq & f(x^k) - \gamma_k \left\langle\nabla f(x^k) , \Exp_i\nabla f_{i_k}(x^k) \right\rangle + \frac{L \gamma_k^2}{2}\Exp_{i_k}\|\nabla f_i(x^k)\|^2 \notag\\
& = & f(x^k) - \gamma_k \|\nabla f(x^k) \|^2 + \frac{L \gamma_k^2}{2}\Exp_{i_k}\|\nabla f_i(x^k)\|^2 \notag\\
&  \overset{\eqref{SGC}}{\leq}& f(x^k) - \gamma_k \|\nabla f(x^k) \|^2 + \frac{L \gamma_k^2}{2}\left[\rho \|\nabla f(x)\|^2 +\delta \right] \notag\\
& \leq& f(x^k) -\min\left\{\frac{1}{2cL_{\max}},\gamma_{\bound}\right\} \|\nabla f(x^k) \|^2 + \frac{L \gamma_{\bound}^2 }{2}\left[\rho \|\nabla f(x)\|^2 +\delta \right] \notag\\
& =& f(x^k) - \left(\min\left\{\frac{1}{2cL_{\max}},\gamma_{\bound}\right\}-\frac{L \gamma_{\bound}^2 \rho}{2}\right) \|\nabla f(x^k) \|^2 + \frac{L \gamma_{\bound}^2}{2}\delta
\end{eqnarray}
By rearranging and taking expectations again:
\begin{eqnarray}
\underbrace{\left(\min\left\{\frac{1}{2cL_{\max}},\gamma_{\bound}\right\}- \frac{L \gamma_{\bound}^2}{2}\rho\right)}_{\alpha} \Exp[\|\nabla f(x^k)\|^2] 
& \leq &\Exp [f(x^k)]  -\Exp[f(x^{k+1})]+\frac{L \gamma_{\bound}^2}{2}\delta
\end{eqnarray}
Let $\alpha>0$ then:
\begin{eqnarray}
 \Exp[\|\nabla f(x^k)\|^2] 
& \leq &\frac{1}{\alpha}\left(\Exp [f(x^k)]  -\Exp[f(x^{k+1})]\right)+\frac{L \gamma_{\bound}^2\delta}{2\alpha}
\end{eqnarray}
By summing from $k=0$ to $K-1$ and dividing by $K$:
\begin{eqnarray}
 \frac{1}{K} \sum_{k=0}^{K-1}\Exp[\|\nabla f(x^k)\|^2] 
& \leq &\frac{1}{\alpha} \frac{1}{K} \sum_{k=0}^{K-1} \left(\Exp [f(x^k)]  -\Exp[f(x^{k+1})]\right)+\frac{1}{K} \sum_{k=0}^{K-1} \frac{L \gamma_{\bound}^2\delta}{2\alpha}\notag\\
& \leq &\frac{1}{\alpha} \frac{1}{K} \left(f(x^0)  -\Exp[f(x^{K})]\right)+ \frac{L \gamma_{\bound}^2\delta}{2\alpha}\notag\\
& \leq &\frac{1}{\alpha K} \left(f(x^0)  -f(x^*)\right)+ \frac{L \gamma_{\bound}^2\delta}{2\alpha}
\end{eqnarray}
In the above result we require that $\alpha=\left(\min\left\{\frac{1}{2cL_{\max}},\gamma_{\bound}\right\}- \frac{L \gamma_{\bound}^2}{2}\rho\right) > 0$. In order for this to hold we need to make extra assumptions on the values of $\gamma_{\bound}$ and parameter $c$. This is what we do next.

Let us divide the analysis into two cases. That is:
\begin{itemize}
\item(i) If $\frac{1}{2cL_{\max}}\leq \gamma_{\bound}$ then, 
$$\alpha= \left(\min\left\{\frac{1}{2cL_{\max}},\gamma_{\bound}\right\}- \frac{L \gamma_{\bound}^2}{2}\rho\right)= \left(\frac{1}{2cL_{\max}}- \frac{L \gamma_{\bound}^2}{2}\rho\right).$$
By preliminary computations, it can be easily shown that $\alpha>0$ if $\gamma_{\bound}< \frac{1}{\sqrt{L\rho cL_{\max}}}$. To avoid contraction the inequality $\frac{1}{2cL_{\max}}<\frac{1}{\sqrt{L\rho cL_{\max}}}$ needs to be true. This is the case of $c>\frac{L\rho}{4L_{\max}}$ which is the assumptions of Theorem~\ref{TheoremNonConvexExtra}.
\item (ii) If $ \gamma_{\bound}\leq\frac{1}{2cL_{\max}}$ then, 
$$\alpha= \left(\min\left\{\frac{1}{2cL_{\max}},\gamma_{\bound}\right\}- \frac{L \gamma_{\bound}^2}{2}\rho\right)= \gamma_{\bound}- \frac{L \gamma_{\bound}^2}{2}\rho=\gamma_{\bound}\left(1- \frac{L \gamma_{\bound}}{2}\rho\right).$$
In this case, by preliminary computations, it can be shown that $\alpha>0$ if  $\gamma_{\bound} < \frac{2}{L \rho}$. For $c>\frac{L\rho}{4L_{\max}}$ it also holds that $\frac{1}{2cL_{\max}}<\frac{2}{L \rho}$.
\end{itemize}
\end{proof}

\section{Additional Convergence Results}
\label{sec:additional-theory}
In this section we present some additional convergence results.
We first prove a $O(1/\sqrt{K})$ convergence rate of stochastic subgradient method with SPS for non-smooth convex functions in the interpolated setting. Furthermore, similar to~\citet{schmidt201111}, we propose a way to increase the mini-batch size for evaluating the stochastic gradient and guarantee convergence to the optimal solution without interpolation. 

\subsection{Non-smooth Convex Functions}
In all of our previous results we assume that functions $f_i$ are smooth. As a result, in the proofs of our theorems we were able to use the lower bound \eqref{NewBounds} of SPS. In the case that functions $f_i$ are not smooth using this lower is clearly not possible. Below we present a Theorem that handles the case of non-smooth function for the convergence of stochastic subgradient method\footnote{Note that for non-smooth functions, it is required to have stochastic subgradient method instead of SGD. That is, in each iteration we replace the evaluation of $\nabla f_i(x)$ with its subgradient counterpart $g_i(x)$}. For this result we require that a constant $G$ exists such that $\|g_i(x)\|^2<G^2$ for each subgradient of function $f_i$. This is equivalent with assuming that functions $f_i$ are $G$-Lipschitz. To keep the presentation simple we only present the interpolated case. Using the proof techniques from the rest of the paper one can easily obtain convergence for the more general setting.

\begin{theorem}
\label{NonSmooth}
Assume interpolation and that $f$ and $f_i$ are convex non-smooth functions. Let $G$ be a constant such that $ \|g_i(x)\|^2<G^2, \forall i \in [n] \quad \text{and} \quad x \in \R^n$. Let $\gamma^k$ be the subgradient counterpart of SPS \eqref{SPLR} with $c=1$. Then the iterates of the stochastic subgradient method satisfy: 
\begin{equation*}
\Exp \left[f(\bar{x}^K)-f(x^*)\right] \leq \frac{G \|x^0-x^*\|  }{\sqrt{K}}=O\left(\frac{1}{\sqrt{K}}\right)
\end{equation*}
where $\bar{x}^K=\frac{1}{K}\sum_{k=0}^{K-1} x^k.$
\end{theorem}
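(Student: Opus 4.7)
The plan is to mimic the telescoping argument used in the classical Polyak subgradient analysis (Theorem~\ref{DeterministicPolyakTheorem}), but carry the stochastic index $i$ through and exploit interpolation at the end. First I would expand one step of the stochastic subgradient method,
\begin{equation*}
\|x^{k+1}-x^*\|^2 = \|x^k-x^*\|^2 - 2\gamma_k \langle g_i(x^k), x^k-x^*\rangle + \gamma_k^2 \|g_i(x^k)\|^2,
\end{equation*}
and apply the subgradient inequality for the convex function $f_i$, namely $\langle g_i(x^k), x^k-x^*\rangle \geq f_i(x^k)-f_i(x^*)$, which under interpolation equals $f_i(x^k)-f_i^*$.

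The crucial simplification is that with $c=1$ the SPS identity is exact (no truncation): $\gamma_k \|g_i(x^k)\|^2 = f_i(x^k)-f_i^*$. Substituting this into the last term collapses the bound to
\begin{equation*}
\|x^{k+1}-x^*\|^2 \leq \|x^k-x^*\|^2 - \gamma_k[f_i(x^k)-f_i^*] = \|x^k-x^*\|^2 - \frac{[f_i(x^k)-f_i^*]^2}{\|g_i(x^k)\|^2}.
\end{equation*}
Using $\|g_i(x^k)\|^2 < G^2$ in the denominator and taking expectation conditional on $x^k$ yields
\begin{equation*}
\Exp_i \|x^{k+1}-x^*\|^2 \leq \|x^k-x^*\|^2 - \frac{1}{G^2} \Exp_i[(f_i(x^k)-f_i^*)^2].
\end{equation*}

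Next, by Jensen's inequality on the convex map $t\mapsto t^2$ together with interpolation (so $\Exp_i f_i^* = f(x^*)$), I can lower-bound $\Exp_i[(f_i(x^k)-f_i^*)^2] \geq (\Exp_i[f_i(x^k)-f_i^*])^2 = (f(x^k)-f(x^*))^2$. Taking a full expectation, setting $a_k := \Exp[f(x^k)-f(x^*)] \geq 0$, and using once more that $(\Exp Y)^2 \leq \Exp[Y^2]$ on the left factor, telescoping over $k=0,\dots,K-1$ gives
\begin{equation*}
\sum_{k=0}^{K-1} a_k^2 \leq G^2 \|x^0-x^*\|^2.
\end{equation*}

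Finally I would combine Cauchy--Schwarz (equivalently, Jensen for $t\mapsto t^2$) applied to the sequence $\{a_k\}$ with Jensen's inequality for $f$ at the averaged iterate $\bar x^K$:
\begin{equation*}
\Exp[f(\bar x^K)-f(x^*)] \leq \frac{1}{K}\sum_{k=0}^{K-1} a_k \leq \sqrt{\frac{1}{K}\sum_{k=0}^{K-1} a_k^2} \leq \frac{G\|x^0-x^*\|}{\sqrt{K}},
\end{equation*}
which is the claimed rate. The only subtle step is the double application of Jensen (once inside the expectation to get $(f(x^k)-f^*)^2$ from $\Exp_i[(f_i(x^k)-f_i^*)^2]$, and once across iterations to pass from a sum of squares to a square root); everything else is routine. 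The assumption that subgradients satisfy $\|g_i(x)\|^2 < G^2$ is used only to replace $\|g_i(x^k)\|^2$ in the denominator by a deterministic constant so that the resulting bound can be iterated.
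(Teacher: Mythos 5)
Your proposal is correct and follows essentially the same route as the paper's proof: one-step expansion, the convexity (subgradient) inequality, the exact SPS identity with $c=1$ under interpolation to collapse the quadratic term, the bound $\|g_i(x^k)\|^2 < G^2$, Jensen over $i$, telescoping, and a final Cauchy--Schwarz/Jensen step at the averaged iterate. The only cosmetic difference is that you apply $(\Exp Y)^2 \leq \Exp[Y^2]$ before telescoping while the paper defers it to the square-root step at the end, which is mathematically equivalent.
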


\begin{proof}
The proof is similar to the deterministic case (see Theorem~\ref{DeterministicPolyakTheorem}). That is, we select the $\gamma_k$ that minimize the right hand side of the inequality after the use of convexity.
\begin{eqnarray}
\|x^{k+1}-x^*\|^2&=&\|x^k-\gamma_k g_i^k-x^*\|^2\notag\\
&=&\|x^k-x^*\|^2-2 \gamma_k \langle x^k-x^*, g_i^k \rangle + \gamma_k^2 \| g_i^k\|^2\notag\\
&\overset{convexity}{\leq}&\|x^k-x^*\|^2-2 \gamma_k \left[f_i(x^k)-f_i(x^*)\right] + \gamma_k^2 \| g_i^k\|^2
\end{eqnarray}

Using the subgradient counterpart of SPS \eqref{SPLR} with $c=1$, that is, $\gamma_k= \frac{f_i(x^k)-f_i(x^*)}{\|g_i^k\|^2}$\footnote{Recall that in the interpolation setting it holds that $f_i^*=f^*=f(x^*)$.} we obtain:
\begin{eqnarray}
\|x^{k+1}-x^*\|^2
&\leq& \|x^k-x^*\|^2-2 \frac{f_i(x^k)-f_i(x^*)}{\|g_i^k\|^2} \left[f_i(x^k)-f_i(x^*)\right] \notag\\ && + \left[\frac{f_i(x^k)-f_i(x^*)}{\|g_i^k\|^2}\right]^2 \| g^k\|^2\notag\\
&=& \|x^k-x^*\|^2- \frac{\left[f_i(x^k)-f_i(x^*)\right]^2}{\|g_i^k\|^2}\notag\\
&\overset{\|g_i(x)\|^2<G^2}{\leq}& \|x^k-x^*\|^2- \frac{\left[f_i(x^k)-f_i(x^*)\right]^2}{G^2}
\end{eqnarray}
taking expectation condition on $x^k$:
\begin{eqnarray}
\Exp_i\|x^{k+1}-x^*\|^2
&\leq& \|x^k-x^*\|^2- \frac{\Exp_i \left[f_i(x^k)-f_i(x^*)\right]^2}{G^2}\notag\\
&\overset{Jensen}{\leq}& \|x^k-x^*\|^2- \frac{\left[\Exp[f_i(x^k)-f_i(x^*)]\right]^2}{G^2}\notag\\
&=& \|x^k-x^*\|^2- \frac{\left[f(x^k)-f(x^*)\right]^2}{G^2}
\end{eqnarray}
Taking expectation again and using the tower property:
\begin{eqnarray}
\Exp\|x^{k+1}-x^*\|^2
&\leq& \Exp\|x^k-x^*\|^2- \frac{\Exp\left[f(x^k)-f(x^*)\right]^2}{G^2}
\end{eqnarray}
By rearranging, summing from $k=0$ to $K-1$ and dividing by $K$:
\begin{eqnarray}
\frac{1}{K}\sum_{k=0}^{K-1} \frac{\Exp\left[f(x^k)-f(x^*)\right]^2}{G^2} 
&\leq& \frac{1}{K}\sum_{k=0}^{K-1} \bigg[ \Exp\|x^k-x^*\|^2-\Exp\|x^{k+1}-x^*\|^2 \bigg]\notag\\
&=& \frac{1}{K}\bigg[ \|x^0-x^*\|^2-\Exp\|x^{K}-x^*\|^2 \bigg]\notag\\
&=& \frac{1}{K}\bigg[ \|x^0-x^*\|^2 \bigg]
\end{eqnarray}
Taking square roots and using Jensen's inequality:
\begin{eqnarray}
\frac{1}{GK} \sum_{k=0}^{K-1} \Exp \left[f(x^k)-f(x^*)\right] \overset{Jensen}{\leq} \frac{1}{G}\sqrt{\frac{1}{K}\sum_{k=0}^{K-1}\Exp\left[f(x^k)-f(x^*)\right]^2 }
&\leq& \frac{1}{\sqrt{K}}\bigg[ \|x^0-x^*\| \bigg]
\end{eqnarray}
Thus,
\begin{eqnarray}
\Exp \left[f(\bar{x}^K)-f(x^*)\right] \overset{Jensen}{\leq} \frac{1}{K} \sum_{k=0}^{K-1} \Exp \left[f(x^k)-f(x^*)\right]
&\leq& \frac{G \|x^0-x^*\|  }{\sqrt{K}},
\end{eqnarray}
where $\bar{x}^K=\frac{1}{K}\sum_{k=0}^{K-1} x^k.$
\end{proof}

\subsection{Increasing Mini-batch Size}
We propose a way to increase the mini-batch size for evaluating the stochastic gradient and guarantee convergence to the optimal solution without interpolation. We present two main Theorems.  In the first Theorem we assume that functions $f_i$ of problem \eqref{MainProb} are $\mu_i$-strongly convex functions and in the second that each function $f_i$ satisfies the PL condition  \eqref{PLcondition} with $\mu_i$ parameter.

\begin{theorem}
\label{TheoremStronglyConvexIncreasingBatch}
Let us have the same assumptions as in Theorem~\ref{TheoremStronglyConvex} and let all $f_i$ be $\mu_i$-strongly convex functions. Then SGD with SPS, and increasing the batch-size progressively such that the batch-size $b_k$ at iteration $k$ satisfies:
\begin{eqnarray}
b_k &\geq& \left[\frac{1}{n} + \frac{1}{4 \gamma_{\bound} \, z^2} \, \frac{\mu_{\min} \mu}{L_{\max}} \, \left(\frac{\| \nabla f(x^k) \|}{L}\right)^2 \right]^{-1}, \notag
\end{eqnarray}
where $z^2 = \sup_{x} \Exp || \nabla f_i (x) - \nabla f(x) ||^2$, converges as:
\begin{eqnarray*}
\Exp\|x^{k}-x^*\|^2
\leq \left(1-\frac{\mu}{4c \, L_{\max}}\right)^k \|x^0-x^*\|^2.
\end{eqnarray*}
\end{theorem}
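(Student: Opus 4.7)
The plan is to adapt the one-step analysis of Theorem~\ref{TheoremStronglyConvex} to the mini-batch setting and then choose $b_k$ large enough that the residual noise is absorbed by half of the linear contraction factor. First I would replace every appearance of $\nabla f_i(x^k)$ and $f_i(x^k)-f_i^*$ in the chain of inequalities from Theorem~\ref{TheoremStronglyConvex} by its mini-batch analogue based on $B_k\subset[n]$ of size $b_k$: $\nabla f_{B_k}(x^k)=\tfrac{1}{b_k}\sum_{i\in B_k}\nabla f_i(x^k)$, $f_{B_k}=\tfrac{1}{b_k}\sum_{i\in B_k}f_i$ and $f_{B_k}^{*}=\inf_x f_{B_k}(x)$. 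Since $f_{B_k}$ is still a convex average that is $\mu_{\min}$-strongly convex and $L_{\max}$-smooth, the bounds in \eqref{NewBounds} and the SPS formula still apply and the proof goes through verbatim, yielding
$$\Exp\|x^{k+1}-x^*\|^{2}\;\le\;(1-\mu\alpha)\,\Exp\|x^k-x^*\|^{2}+2\gamma_{\bound}\,\sigma_{b_k}^{2},$$
with $\alpha=\min\{1/(2cL_{\max}),\gamma_{\bound}\}$ and $\sigma_{b_k}^{2}:=\Exp_{B_k}[f_{B_k}(x^*)-f_{B_k}^{*}]$.

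Next I would bound $\sigma_{b_k}^{2}$ by the variance of the mini-batch gradient. By $\mu_{\min}$-strong convexity of $f_{B_k}$ we have $f_{B_k}(x^*)-f_{B_k}^{*}\le \tfrac{1}{2\mu_{\min}}\|\nabla f_{B_k}(x^*)\|^{2}$. Since $\nabla f(x^*)=0$, the standard mini-batch-without-replacement variance identity applied to the mean-zero vectors $\nabla f_i(x^*)$ gives $\Exp\|\nabla f_{B_k}(x^*)\|^{2}\le \bigl(\tfrac{1}{b_k}-\tfrac{1}{n}\bigr)z^{2}$, so
$$\sigma_{b_k}^{2}\;\le\;\frac{z^{2}}{2\mu_{\min}}\Bigl(\frac{1}{b_k}-\frac{1}{n}\Bigr).$$

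With this ingredient I would split the contraction $\mu\alpha$ into two halves: keep the rate $1-\mu\alpha/2$ and require the noise term to be dominated by $\tfrac{\mu\alpha}{2}\|x^k-x^*\|^{2}$. By $L$-smoothness of $f$ one has $\|x^k-x^*\|\ge\|\nabla f(x^k)\|/L$, so it suffices to enforce $2\gamma_{\bound}\,\sigma_{b_k}^{2}\le \tfrac{\mu\alpha}{2L^{2}}\|\nabla f(x^k)\|^{2}$; substituting the bound on $\sigma_{b_k}^{2}$ and $\alpha=1/(2cL_{\max})$ and rearranging reproduces exactly the hypothesised lower bound on $b_k$. Iterating the resulting one-step inequality $\Exp\|x^{k+1}-x^*\|^{2}\le(1-\mu/(4cL_{\max}))\,\Exp\|x^k-x^*\|^{2}$ then yields the claimed linear rate.

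The main obstacle is handling the mini-batch version of the optimal objective difference: one must be careful that $\sigma_{b_k}^{2}$ is the expectation of $f_{B_k}(x^*)-\inf_x f_{B_k}(x)$ and \emph{not} of $f_{B_k}(x^*)-\tfrac{1}{b_k}\sum_{i\in B_k}f_i^{*}$, and then to bridge this function-value gap to the gradient-variance quantity $z^{2}$ via strong convexity of $f_{B_k}$ evaluated at $x^{*}$. Once that bridge is in place, the rest is routine bookkeeping with constants; a secondary but straightforward issue is verifying that the sampling-without-replacement correction produces precisely the $\tfrac{1}{n}$ offset that appears in the stated batch-size condition, rather than the naive $1/b_k$ bound one would obtain from sampling with replacement.
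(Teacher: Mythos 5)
Your proposal is correct and follows essentially the same route as the paper's proof: reuse the one-step recursion from Theorem~\ref{TheoremStronglyConvex} for the mini-batch, bound $\Exp[f_{B_k}(x^*)-f_{B_k}^*]$ by $\tfrac{1}{2\mu_{\min}}\Exp\|\nabla f_{B_k}(x^*)\|^2$ via strong convexity, apply the without-replacement variance bound $\tfrac{n-b}{nb}z^2$, and choose $b_k$ so the noise is dominated by $\tfrac{\mu}{4cL_{\max}}\|x^k-x^*\|^2$ using $\|x^k-x^*\|\ge\|\nabla f(x^k)\|/L$. Your rearranged batch-size condition carries the factor $4c$ exactly as in the paper's own proof (the theorem statement's $4$ without $c$ is the paper's internal inconsistency, not a gap in your argument).
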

\begin{proof}
Following the proof of Theorem~\ref{TheoremStronglyConvex} for the batch $b$. From Equation~\ref{eq:sc-inter1},
\begin{eqnarray*}
\Exp_i\|x^{k+1}-x^*\|^2 
&\leq & \left(1-\mu \min\left\{\frac{1}{2cL_{\max}},\gamma_{\bound}\right\} \right) \|x^k-x^*\|^2  + 2\gamma_{\bound} \sigma^2
\end{eqnarray*}
By strong-convexity of all $f_i$, $\forall i \in [n]$ the minibatch function $f_b$ is $\mu_b$-strongly convex and it holds that:
\begin{eqnarray}
\label{cnajsal}
\sigma^2=\Exp \left[f_b(x^*)-f_b^*\right]& \leq & \Exp \left[\frac{1}{2 \mu_b} \| \nabla f_{b} (x^*) \|^2 \right] \notag \\
& \leq & \frac{1}{2 \mu_{\min}} \Exp \| \nabla f_{b} (x^*) \|^2,
\end{eqnarray}
where in the last inequality we use that $\mu_{\min}=\min\{\mu_i\}_{1=1}^n \leq \mu_i \leq \mu_b$.
By the assumption that the gradients at the optimum have bounded variance, from~\citet{harikandeh2015stopwasting, lohr2019sampling}:
\begin{equation}
\label{asdadad}
\Exp \| \nabla f_{b} (x^*) \|^2  \leq  \frac{n - b}{n \, b} z^2 ,
\end{equation}
Thus,
\begin{eqnarray*}
 \sigma^2=\Exp \left[f_b(x^*)-f_b^* \right]& \overset{\eqref{cnajsal},\eqref{asdadad}}{\leq} & \frac{1}{2 \mu_{\min}} \, \frac{n - b}{n \, b }z^2 
\end{eqnarray*}
and as a result,
\begin{eqnarray*}
\Exp \|x^{k+1}-x^*\|^2 & \leq & \left(1-\mu \min \left\{\frac{1}{2cL_{\max}},\gamma_{\bound}\right\} \right) \Exp\|x^k-x^*\|^2 +  \frac{\gamma_{\bound}}{\mu_{\min}} \, \frac{n - b}{n \, b } z^2
\end{eqnarray*}
If we set the batch-size in iteration $k$ such that, 
\begin{eqnarray}
\frac{\gamma_{\bound}}{\mu_{min}} \, \frac{n - b}{n \, b }z^2 & \leq & \frac{\mu}{4 c L_{\max}} \, \left(\frac{\| \nabla f(x^k) \|}{L} \right)^2 \notag \\
\implies b &\geq& \left[\frac{1}{n} + \frac{1}{\gamma_{\bound} \, z^2} \, \frac{\mu_{\min} \mu}{4 c L_{\max}} \, \left(\frac{\| \nabla f(x^k) \|}{L}\right)^2 \right]^{-1} 
\end{eqnarray}
\begin{eqnarray*}
\Exp_i \|x^{k+1}-x^*\|^2 &\leq & \left(1-\mu \min \left\{\frac{1}{2cL_{\max}},\gamma_{\bound}\right\} \right) \|x^k-x^*\|^2  + \frac{\mu}{4 c L_{\max}} \left(\frac{\| \nabla f(x^k) \|}{L} \right)^2 \\
& \leq & \left(1-\mu \min \left\{\frac{1}{2cL_{\max}},\gamma_{\bound}\right\} \right) \|x^k-x^*\|^2  + \frac{\mu}{4 c L_{\max}} \|x^k - x^* \|^2 \\
\Exp_i \|x^{k+1}-x^*\|^2 & \leq & \left(1- \mu \min\left\{\frac{1}{4cL_{\max}},\gamma_{\bound} - \frac{1}{4 c L_{\max}} \right\} \right) \|x^k-x^*\|^2
\end{eqnarray*}
Following the remaining proof of Theorem~\ref{TheoremStronglyConvex}, 
\begin{eqnarray*}
\Exp \|x^{k}-x^*\|^2 & \leq & \left(1- \mu \min\left\{\frac{1}{4cL_{\max}},\gamma_{\bound} - \frac{1}{4 c L_{\max}} \right\} \right)^{k} \|x^0-x^*\|^2
\end{eqnarray*}
For SPS it holds that $\gamma_{\bound} = \infty$. Thus,
\begin{eqnarray}
\Exp \|x^{k}-x^*\|^2 & \leq & \left(1-  \frac{\mu}{4cL_{\max}} \right)^{k} \|x^0-x^*\|^2
\end{eqnarray}
\end{proof}

\begin{theorem}
\label{TheoremNonConvexPLIncreasingBatch}
Assume that all functions $f_i$ satisfy the PL inequality \eqref{PLcondition} and let $f$ and $f_i$ be smooth functions. Then SGD with SPS$_{\max}$, and increasing the batch-size progressively such that the batch-size $b_k$ at iteration $k$ satisfies:
\begin{eqnarray}
b_k &\geq& \left[\frac{1}{n} + \frac{2}{\gamma_{\bound} \, z^2} \, \frac{\mu_{\min} v}{c L} [f(x^k) - f(x^*)] \,  \right]^{-1},
\end{eqnarray}
where $z^2 = \sup_{x} \Exp || \nabla f_i (x) - \nabla f(x) ||^2$, converges as:
\begin{eqnarray}
\Exp[f(x^k) - f(x^*)] \leq \left(1-v/2 \right)^k  \left[ f(x^0)- f(x^*)\right],
\end{eqnarray}
where  $v = 1 - \gamma_{\bound} \left(\frac{1}{\alpha}- 2\mu  + \frac{L_{\max}}{2c}\right) \in (0,1)$. 
\end{theorem}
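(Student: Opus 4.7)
The plan is to adapt the argument of Theorem~\ref{TheoremNonConvexPL} to the minibatch setting, and then to pick $b_k$ adaptively so as to absorb the noise term, exactly in the spirit of Theorem~\ref{TheoremStronglyConvexIncreasingBatch}.

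First I would re-run the proof of Theorem~\ref{TheoremNonConvexPL} verbatim but with $\nabla f_i(x^k)$ replaced by the minibatch gradient $\nabla f_{b_k}(x^k)$, and with $\sigma^2$ replaced by the minibatch optimal-objective difference $\sigma_{b_k}^2 \eqdef \Exp[f_{b_k}(x^*) - f_{b_k}^*]$. Since smoothness of $f$ and its global PL constant $\mu$ are unchanged by minibatching, the hypotheses on $c$ and $\gamma_{\bound}$ from Theorem~\ref{TheoremNonConvexPL} carry over, and the same manipulations yield the one-step contraction
\begin{align*}
\Exp[f(x^{k+1}) - f(x^*)] \,\leq\, \nu \,\Exp[f(x^k) - f(x^*)] \,+\, \frac{L \gamma_{\bound}}{2c}\, \sigma_{b_k}^2,
\end{align*}
with $\nu = \gamma_{\bound}\bigl(\tfrac{1}{\alpha} - 2\mu + \tfrac{L_{\max}}{2c}\bigr) \in (0,1)$ and $v = 1-\nu$.

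Next I would bound the minibatch noise $\sigma_{b_k}^2$ following the template used for the strongly convex case in Theorem~\ref{TheoremStronglyConvexIncreasingBatch}. Using the PL inequality on $f_{b_k}$ at the global minimizer $x^*$ combined with the standard sampling-without-replacement identity,
\begin{align*}
\sigma_{b_k}^2 \,\leq\, \frac{1}{2\mu_{\min}} \Exp\|\nabla f_{b_k}(x^*)\|^2 \,\leq\, \frac{1}{2\mu_{\min}}\cdot \frac{n-b_k}{n\,b_k}\,z^2.
\end{align*}
Imposing that the resulting noise contribution be at most $\tfrac{v}{2}[f(x^k)-f(x^*)]$, substituting the above bound, and solving the linear inequality for $b_k$ produces precisely the stated lower bound on the batch-size. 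With this choice,
\begin{align*}
\Exp[f(x^{k+1}) - f(x^*)] \,\leq\, \bigl(\nu + v/2\bigr)\Exp[f(x^k) - f(x^*)] \,=\, (1 - v/2)\,\Exp[f(x^k) - f(x^*)],
\end{align*}
and iterating this recursion gives the claimed linear convergence $(1-v/2)^k[f(x^0)-f(x^*)]$.

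The main obstacle is the step that transfers the PL inequality from the individual components to the minibatch function $f_{b_k}$. Unlike strong convexity, PL is \emph{not} in general preserved under averaging, so some care is required. The cleanest resolution, consistent with the standing hypothesis that each $f_i$ is PL with constant $\mu_i$, is to take $f_{b_k}$ itself PL with constant at least $\mu_{\min}$; alternatively one can use the componentwise decomposition $f_{b_k}(x^*) - f_{b_k}^* \leq \tfrac{1}{b_k}\sum_{i\in B_k}[f_i(x^*) - f_i^*]$ and bound each summand by $\tfrac{1}{2\mu_i}\|\nabla f_i(x^*)\|^2$, though one must then re-couple the resulting gradient-sum to $\Exp\|\nabla f_{b_k}(x^*)\|^2$ via the sampling identity. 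Once this step is settled the remainder of the proof is a direct assembly of the two pieces already present in the paper.
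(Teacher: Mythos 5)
Your proposal is correct and follows essentially the same route as the paper: the one-step recursion from Theorem~\ref{TheoremNonConvexPL} with the minibatch noise term $\frac{L\gamma_{\bound}}{2c}\Exp\left[f_b(x^*)-f_b^*\right]$, the bound $\Exp\left[f_b(x^*)-f_b^*\right]\leq \frac{1}{2\mu_{\min}}\frac{n-b_k}{n\,b_k}z^2$, and the choice of $b_k$ that caps the noise at $\frac{v}{2}[f(x^k)-f(x^*)]$ to yield the $(1-v/2)$ contraction. The subtlety you flag about PL not being preserved under averaging is treated in the paper exactly as in your ``cleanest resolution'': the PL inequality is applied directly to the minibatch function $f_b$ with constant at least $\mu_{\min}$.
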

\begin{proof}
Following the proof of Theorem~\ref{TheoremNonConvexPL}, from Equation~\ref{oakslxa},
\begin{eqnarray*}
\Exp\left[f(x^{k+1})-f(x^*)\right]& \leq & \underbrace{\gamma_{\bound} \left(\frac{1}{\alpha}- 2\mu  + \frac{L_{\max}}{2c}\right)}_{1 - v}\Exp \left[ f(x^k)- f(x^*)\right]+\frac{L \gamma_{\bound}}{2c} \, \Exp \left[f_b(x^*)-f_b^*\right]
\end{eqnarray*}
Similar to the proof of Theorem~\ref{TheoremStronglyConvexIncreasingBatch}, since each function $f_i$ is PL, 
\begin{eqnarray}
\Exp \left[f_b(x^*)-f_b^*\right] & \leq & \Exp \left[\frac{1}{2 \mu_i} \| \nabla f_{b} (x^*) \|^2 \right] \notag \\
& \leq & \frac{1}{2 \mu_{\min}} \Exp \| \nabla f_{b} (x^*) \|^2 \notag \\
\Exp \left[f_b(x^*)-f_b^*\right] & \leq & \frac{1}{2 \mu_{\min}} \, \frac{n - b}{n \, b } z^2
\end{eqnarray}
From the above relations, 
\begin{eqnarray}
\Exp\left[f(x^{k+1})-f(x^*)\right]& \leq & (1 - v) \, \Exp \left[ f(x^k)- f(x^*)\right] + \frac{L \gamma_{\bound}}{2c} \, \frac{1}{2 \mu_{\min}} \, \frac{n - b}{n \, b } z^2 
\end{eqnarray}
If we set the batch-size $b$ s.t. 
\begin{eqnarray*}
\frac{L \gamma_{\bound}}{2c} \, \frac{1}{2 \mu_{\min}} \, \frac{n - b}{n \, b } z^2 & \leq & \frac{v}{2} \left[ f(x^k)- f(x^*)\right] \\
\implies b &\geq& \left[\frac{1}{n} + \frac{2}{\gamma_{\bound} \, z^2} \, \frac{\mu_{\min} v}{c L} [f(x^k) - f(x^*)] \,  \right]^{-1}
\end{eqnarray*}
\begin{eqnarray}
\implies \Exp\left[f(x^{k+1})-f(x^*)\right] & \leq & \left(1-v/2 \right) \Exp \left[ f(x^k)- f(x^*)\right] 
\end{eqnarray}
Following the remaining proof of Theorem~\ref{TheoremNonConvexPL},
\begin{eqnarray}
\Exp\left[f(x^{k})-f(x^*)\right] &\leq& \left(1-v/2 \right)^k  \left[ f(x^0)- f(x^*)\right].
\end{eqnarray}
\end{proof}

\clearpage
\section{Computing $f_i^*$ for $\ell_2$-regularized standard surrogate losses}
\label{app:compute:fi}

In this section, we explain how the values of $f_i^*$ can be computed in closed form expressions for some standard binary surrogate losses from~\citet{bartlett2006convexity} with $\ell_2$-regularization. These closed form expressions are using the Lambert~$W$ function~\citep{corless1996lambertw} or the more general $r$-Lambert function~\citep{mezHo2017generalization}. While these functions have efficient numerical routines to compute them (see e.g.~\citet{corless1996lambertw}), we note that we can also compute easily $f_i^*$ for the cases in this section by solving simple strongly convex minimization problems in 1 dimension (see~\eqref{eq:fi_g} and~\eqref{eq:fi_g_exp} below). This can be done efficiently to machine precision using Newton's method for example, and could be used to pre-compute $f_i^*$ for each $i$ in our synthetic experiments of Section~\ref{sec:experiments-syn-ninter}.

Following \cite{mezHo2017generalization}, we first start by presenting the definition of the Lambert $W$ function and its recent generalization, the $r$-Lambert function. 

\begin{definition}[Lambert $W$ function]
Consider the transcendental equation \begin{equation}\label{extranoaknsa}
xe^x=a.
\end{equation} 
The inverse of the function on the left-hand side of the above equation ($xe^x$) is called the Lambert $W$ function and is denoted by $W$. In general, $W$ is a multivalued function on complex numbers. But for $a \geq 0$, there is a unique real solution (called the principal branch) and this is the one that we will consider in this paper, i.e.\ the unique real solution of~\eqref{extranoaknsa} for $a \geq 0$ is given by $x=W(a)$. We note that $W(a)$ is a strictly increasing function for $a \geq 0$ with $W(0)=0$, and that there are efficient numerical routines to compute it~\citep{corless1996lambertw}.
\end{definition}

\begin{definition}[$r$-Lambert function]
The $r$-Lambert function is a direct generalization of the Lambert $W$ function first proposed by~\citet{mezHo2017generalization}. 
It is used to express the solution of the transcendental equation
\begin{equation}\label{extranoaknsa2}
xe^x +rx =a,
\end{equation}  where $r$ is a fixed real number.
The inverse of the function $x e^x+rx$ is called the $r$-Lambert function and is denoted by $W_r$. Again, there are multiple possible inverses in general; we will consider the principal branch in this paper, which is a strictly increasing function on its domain of definition (which always includes $\R_+$ -- see Theorem~4 in~\citet{mezHo2017generalization}). Thus $x=W_r(a)$ solution of~\eqref{extranoaknsa2}, at least valid for $a \geq 0$
\end{definition}
From the above definitions, it is clear that the classical Lambert $W$ function is a special case of the $r$-Lambert function when $r=0$. In this case we write $W_0$.

Manipulating the transcendental equation~\eqref{extranoaknsa2}, we can easily solve the slightly more general equation as given in the following Theorem.

\begin{theorem}[Variant of Theorem 3 from \citet{mezHo2017generalization}]
\label{LambertTheorem}
Let $c \in \R$. The equation $xe^{cx} +r x =a$ can be resolved by the $r$-Lambert function and the solution can be expressed as:
\begin{equation}
x = \frac{1}{c} W _{r} (c a)
\end{equation}
\end{theorem}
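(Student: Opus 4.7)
The plan is to prove this by a direct change of variables that reduces the given transcendental equation to the defining equation of the $r$-Lambert function. Since $W_r$ is defined as the inverse of $y \mapsto y e^y + ry$, the strategy is to substitute a new variable chosen so that the exponent in $e^{cx}$ becomes simply the argument of the exponential.

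First I would handle the trivial case $c = 0$ (where the equation degenerates to $rx = a$) separately, so that I may assume $c \neq 0$ when dividing. Then I would set $y \eqdef cx$, which gives $x = y/c$. Substituting into $xe^{cx} + rx = a$ produces
\begin{equation*}
\frac{y}{c}\, e^{y} + r \,\frac{y}{c} = a.
\end{equation*}
Multiplying both sides by $c$ yields the canonical $r$-Lambert equation
\begin{equation*}
y e^{y} + r y = c a.
\end{equation*}

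By the definition of the principal branch of the $r$-Lambert function (taking $a \geq 0$ so that $ca$ lies in the domain where the principal branch is well-defined, as guaranteed by Theorem~4 of \citet{mezHo2017generalization}), the unique real solution is $y = W_r(ca)$. Undoing the substitution gives $x = y/c = \frac{1}{c}\, W_r(ca)$, which is the claimed closed form.

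I do not expect any real obstacle: the entire content is a linear change of variables, and the only subtlety is keeping track of the domain on which $W_r$ is single-valued (the principal branch), which the theorem statement implicitly assumes by writing ``the solution.'' No additional hypotheses on $r$ are needed beyond what is already required for $W_r(ca)$ to be defined.
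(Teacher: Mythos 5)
Your proof is correct and takes essentially the same route as the paper, which treats this as an immediate manipulation of the defining equation $ye^{y}+ry=ca$ via the rescaling $y=cx$. One small nit: in the excluded trivial case $c=0$ the equation degenerates to $(1+r)x=a$ rather than $rx=a$, but since the stated formula requires $c\neq 0$ anyway this does not affect the argument.
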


In the rest of this section, we provide the analytical derivations of the closed form expression of $f_i^*$ when the given losses are $\ell_2$-regularized version of: 
\begin{enumerate}
\item the binary log-loss. We show that in this case that $f_i^*$ can be computed in closed form expression using the $r$-Lambert function.
\item the binary exponential loss. As mentioned by~\citet{bartlett2006convexity} this loss appears in the Adaboost algorithm, amongst others. In this case, the Lambert $W$ function is used.
\end{enumerate}

\subsection{Binary Log-loss}
The $\ell_2$-regularized logistic regression problem is given by:
\begin{equation}
\label{logRegression}
f(x)=\frac{1}{n} \sum_{i=1}^n \log \left(1+e^{-b_i \langle A_i , x \rangle} \right) +\frac{\lambda}{2} \|x\|^2,
\end{equation}
where $A_i$ is the input feature vector for the $i^{th}$ datapoint while $b_i \in \{-1,1\}$ is its label, and $\lambda$ is the regularization parameter.

Note that by following the notation of the rest of the paper, in \eqref{logRegression} we have $$f_i (x)=\log \left(1+e^{-b_i \langle A_i , x \rangle} \right) +\frac{\lambda}{2} \|x\|^2 .$$

To simplify the notation, let us define $z_i:=b_i A_i$. Let us also decompose the vector $x$ in its direction $\hat{x}$ (element of the unit sphere, i.e.~$\|\hat{x}\|=1$) and its norm $\alpha=\|x\|$, and thus $x=\alpha \hat{x}$ for $\alpha \in \R$.  Then we obtain the following:
\begin{eqnarray}
\label{cnaodao}
f_i^*\eqdef \inf_{x} f_i(x)& = & \inf_x \log \left(1+e^{- \langle x,z_i \rangle} \right) +\frac{\lambda}{2} \|x\|^2 \notag\\
&=& \inf_\alpha \inf_{\hat{x}} \log \left(1+e^{-\alpha \langle \hat{x},z_i \rangle} \right)+ \frac{\lambda}{2} \alpha^2
\end{eqnarray}	
Note that $\log \left(1+e^{-\alpha \langle \hat{x},z_i \rangle} \right)$ is decreasing as $\langle \hat{x},z_i \rangle$ increases.
Thus, by Cauchy-Schwartz inequality, $\inf_{\hat{x}}$ is reached when $\hat{x}=  \frac{z_i}{\| z_i \|}$ and equation~\eqref{cnaodao} takes the following form:
\begin{eqnarray} \label{eq:fi_g}
f_i^*&=& \inf_\alpha \underbrace{\log \left(1+e^{-\alpha \|z_i\|} \right)+ \frac{\lambda}{2} \alpha^2}_{:= g(\alpha)}
\end{eqnarray}	
$g(\alpha)$ is a strongly convex function of $\alpha$, and we can find its global minimum by setting its gradient to zero: 
$$\nabla g(\alpha)=\frac{-\|z_i\| e^{-\alpha \|z_i\|}}{1+e^{-\alpha \|z_i\|}}+\lambda \alpha=0.$$
Let $c=\|z_i\|$, then by rearranging the last equation, we obtain:
$$\alpha +\alpha  e ^{\alpha c} =\frac{c}{\lambda} .$$

Using Theorem~\ref{LambertTheorem}, the solution of the above equation can be expressed as follows (using $r=1$): 
\begin{equation}
\label{alphastarLogistic}
\alpha^*=\frac{1}{c} W_1 \left( \frac{c^2}{\lambda} \right).
\end{equation}

Thus to get a closed form expression for $f_i^*$, one can plug $\alpha^*$ in~\eqref{eq:fi_g}, i.e.\ $f_i^* = g(\alpha^*)$.

\subsection{Binary Exponential Loss}
The $\ell_2$-regularized binary exponential loss problem is given by:
\begin{equation}
\label{ExpLoss}
f(x)=\frac{1}{n} \sum_{i=1}^n e^{-b_i \langle A_i , x \rangle} +\frac{\lambda}{2} \|x\|^2,
\end{equation}
with the same notation as for the logistic regression problem~\eqref{logRegression}.

From \eqref{ExpLoss} it is clear that,
$$f_i (x)=e^{-b_i \langle A_i , x \rangle } +\frac{\lambda}{2} \|x\|^2.$$

As for the logistic regression derivation, defining $z_i:=b_i A_i$, and letting $x=\alpha \hat{x}$ with $\|\hat{x}\|=1$, then we obtain the following:
\begin{eqnarray}
f_i^*\eqdef \inf_{x} f_i(x)& = & \inf_x e^{- \langle x,z_i \rangle} +\frac{\lambda}{2} \|x\|^2 \notag\\
&=& \inf_\alpha \inf_{\hat{x}} e^{-\alpha \langle \hat{x},z_i \rangle} + \frac{\lambda}{2} \alpha^2 
\end{eqnarray}	
As for the logistic regression, note that by Cauchy-Schwartz inequality, $\inf_{\hat{x}}$ is reached when $\hat{x}= \frac{z_i}{\| z_i \|}$.
Thus,
\begin{eqnarray} \label{eq:fi_g_exp}
f_i^*&=& \inf_\alpha \underbrace{ e^{-\alpha \|z_i\|}+ \frac{\lambda}{2} \alpha^2}_{:=g(\alpha)}
\end{eqnarray}	

Again, $g(\alpha)$ is a strongly convex function of $\alpha$, and we can find its global minimum by setting its gradient to zero:
$$\nabla g(\alpha)=-\|z_i\| e^{-\alpha \|z_i\|}+\lambda \alpha=0.$$
Let $c=\|z_i\|$, then by rearranging the last equation, we obtain:
$$\alpha  e ^{\alpha c} =\frac{c}{\lambda}$$

Using Theorem~\ref{LambertTheorem}, the solution of the above equation can be expressed as follows (using $r=0$): 
\begin{equation}
\label{alphastarExponential}
\alpha^*=\frac{1}{c} W_0 \left( \frac{c^2}{\lambda} \right),
\end{equation}
where $W_0$ is the classical Lambert $W$ function.

Thus to get a closed form expression for $f_i^*$, one can plug $\alpha^*$ in~\eqref{eq:fi_g_exp}, i.e.\ $f_i^* = g(\alpha^*)$.
\newpage
\section{Additional Experiments}
\label{app:additional}
Following the experiments presented in the main paper, we further evaluate the performance of SGD with SPS when training over-parametrized models.

\begin{figure*}[ht]
    \centering
    \includegraphics[width = 0.8\textwidth]{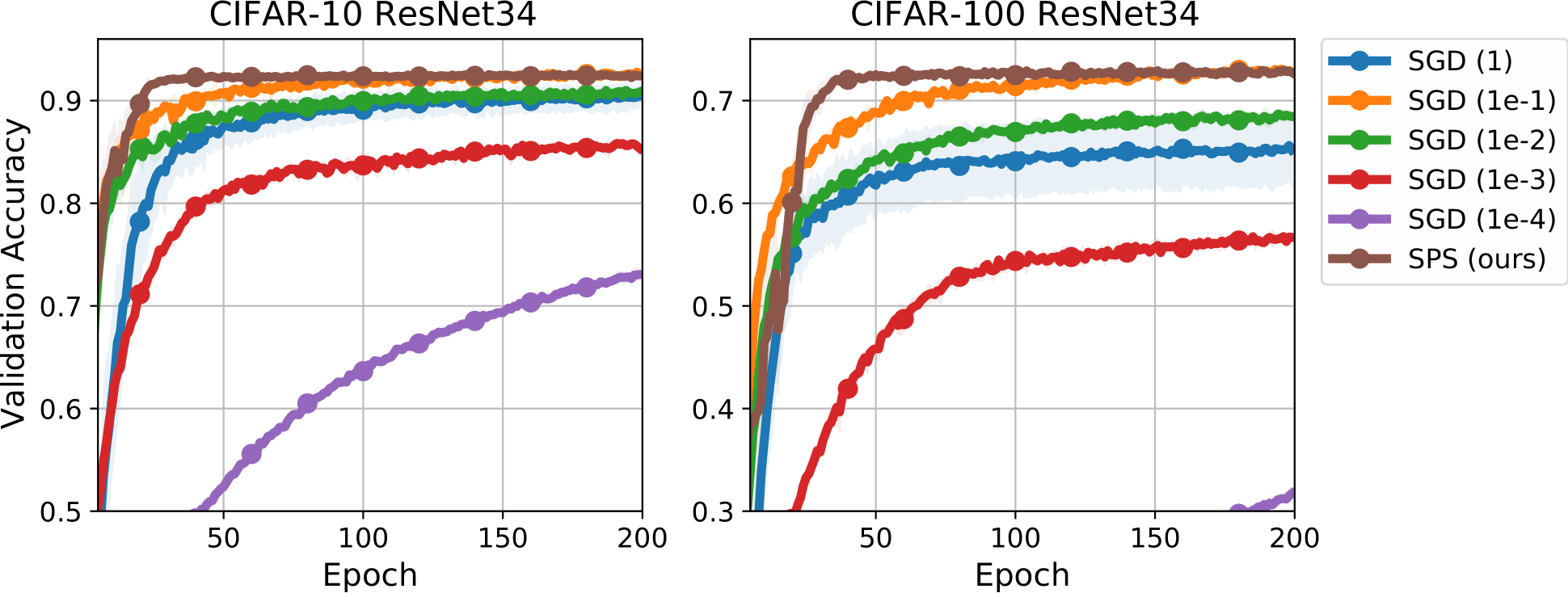}
    \caption{Comparison between SGD with different learning rates and SPS.}
\end{figure*}

\begin{figure*}[ht]
    \centering
    \includegraphics[width = \textwidth]{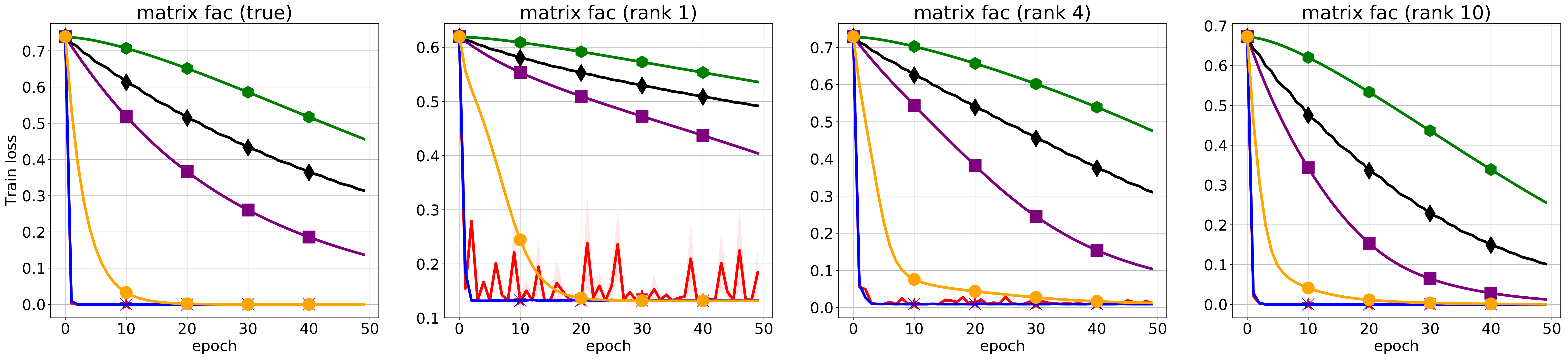}
    \includegraphics[width = \textwidth]{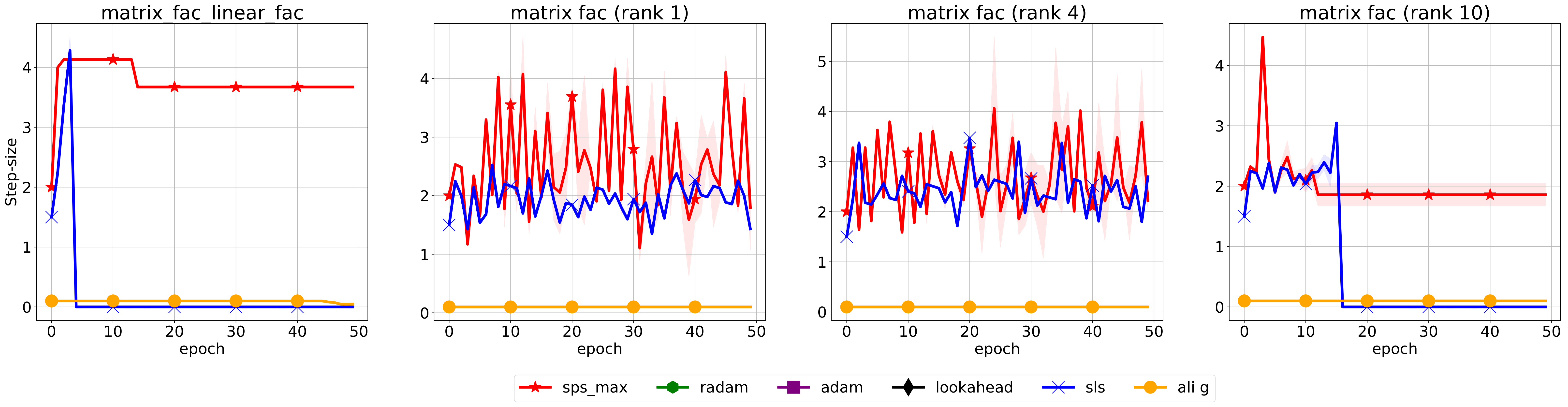}
    \caption{Deep matrix factorization.}
\end{figure*}

\begin{figure*}[ht]
    \centering
    \includegraphics[width = \textwidth]{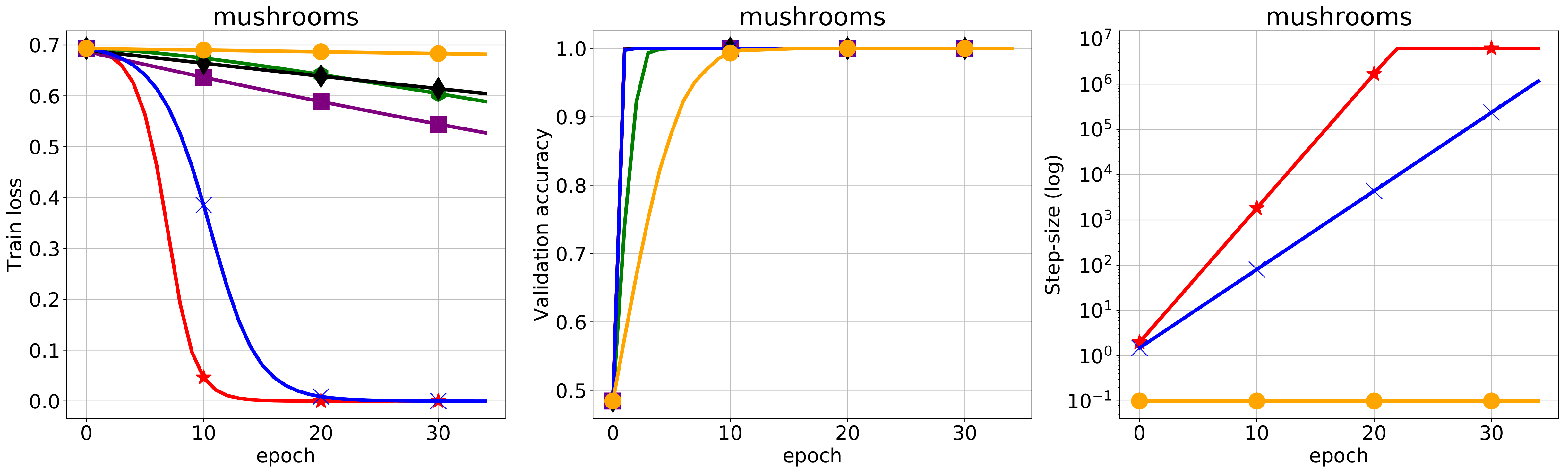}
    \includegraphics[width = \textwidth]{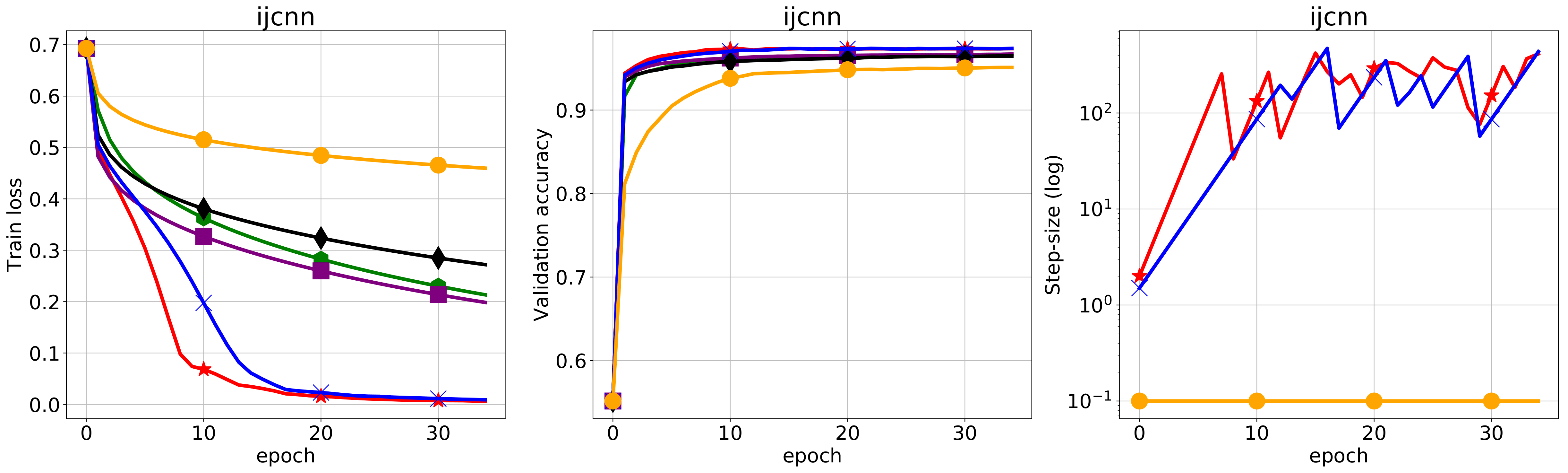}
    \includegraphics[width = \textwidth]{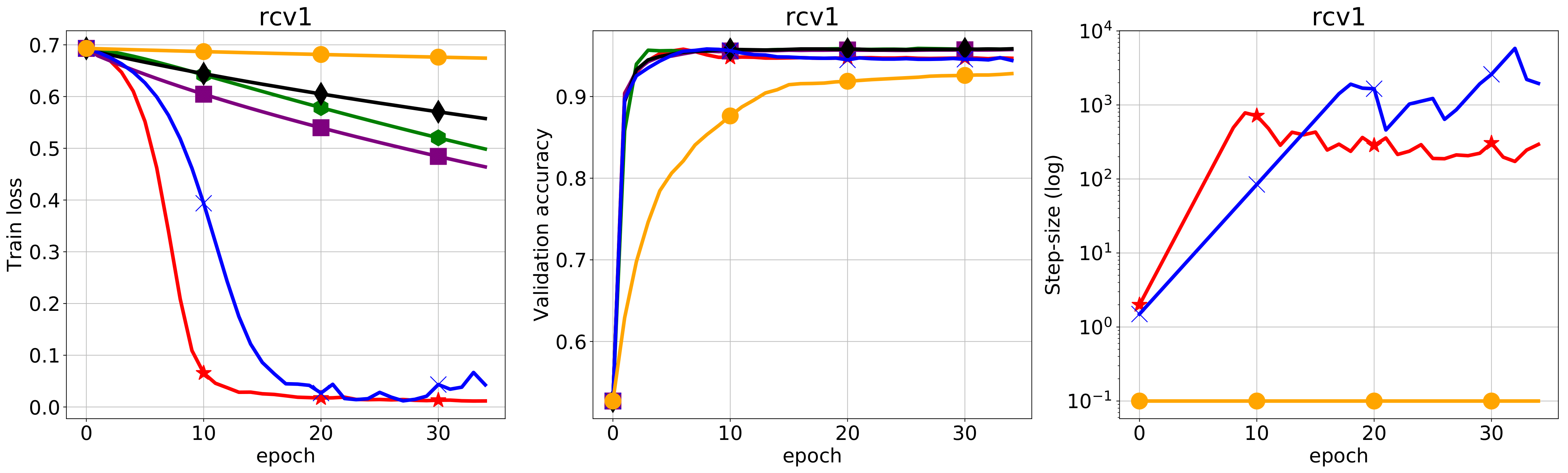}
    \includegraphics[width = \textwidth]{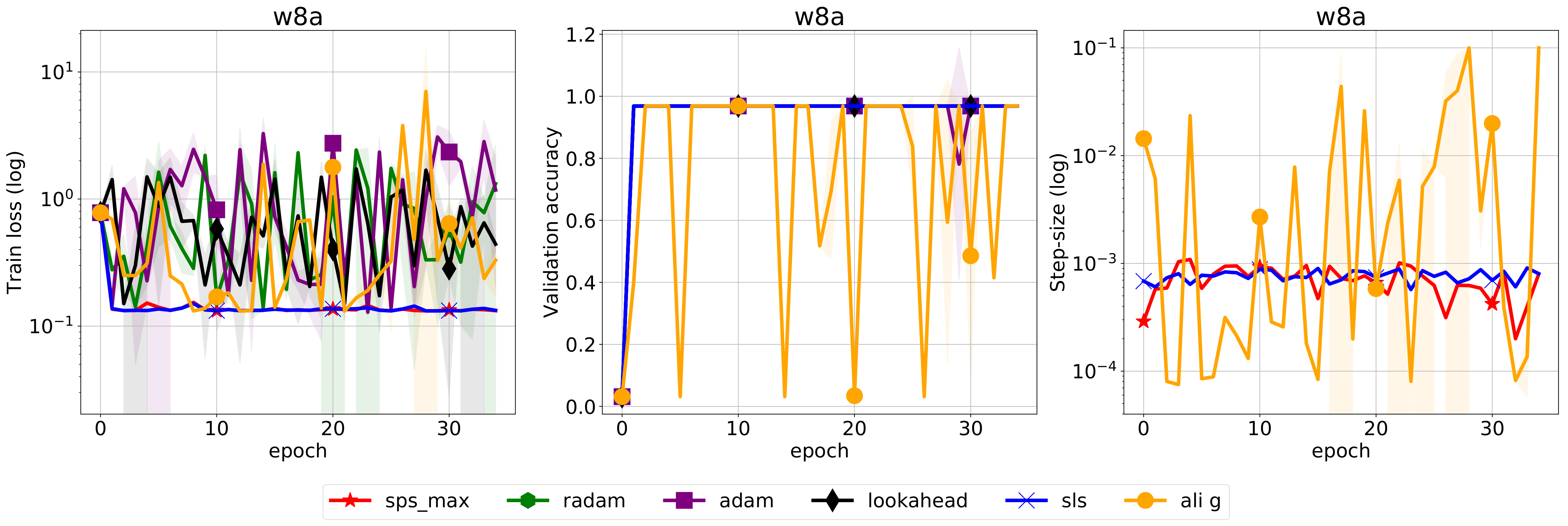}
    \caption{Binary classification using kernels. Data: mushrooms, ijcnn, rcv1, w8a.}
\end{figure*}    

\begin{figure*}[ht]
    \centering
    \includegraphics[width = \textwidth]{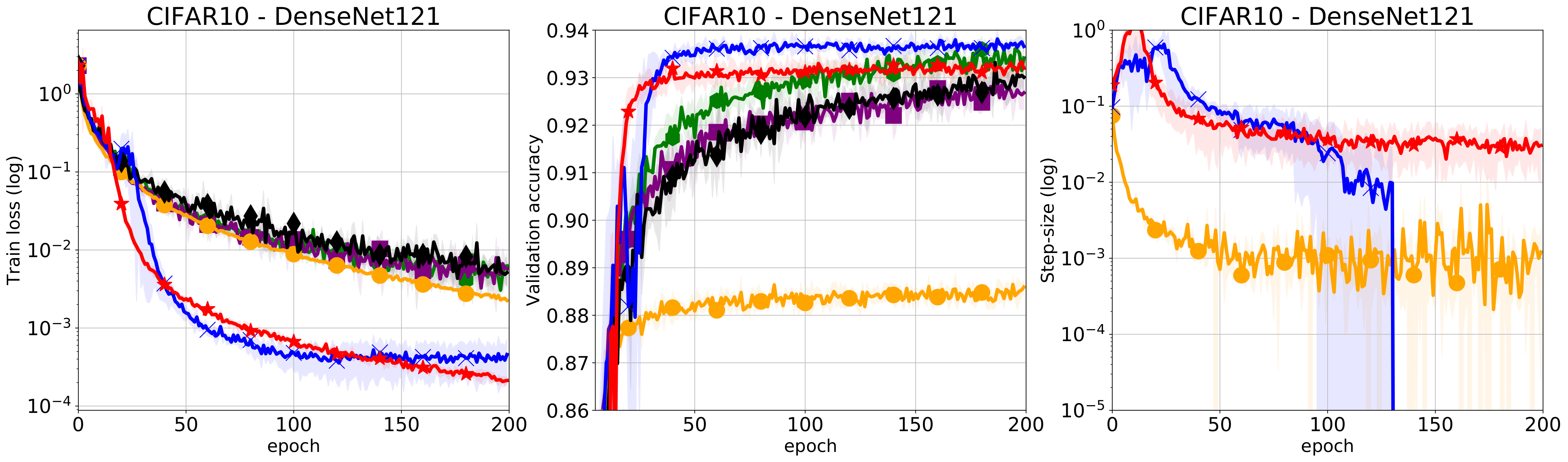}
    \includegraphics[width = \textwidth]{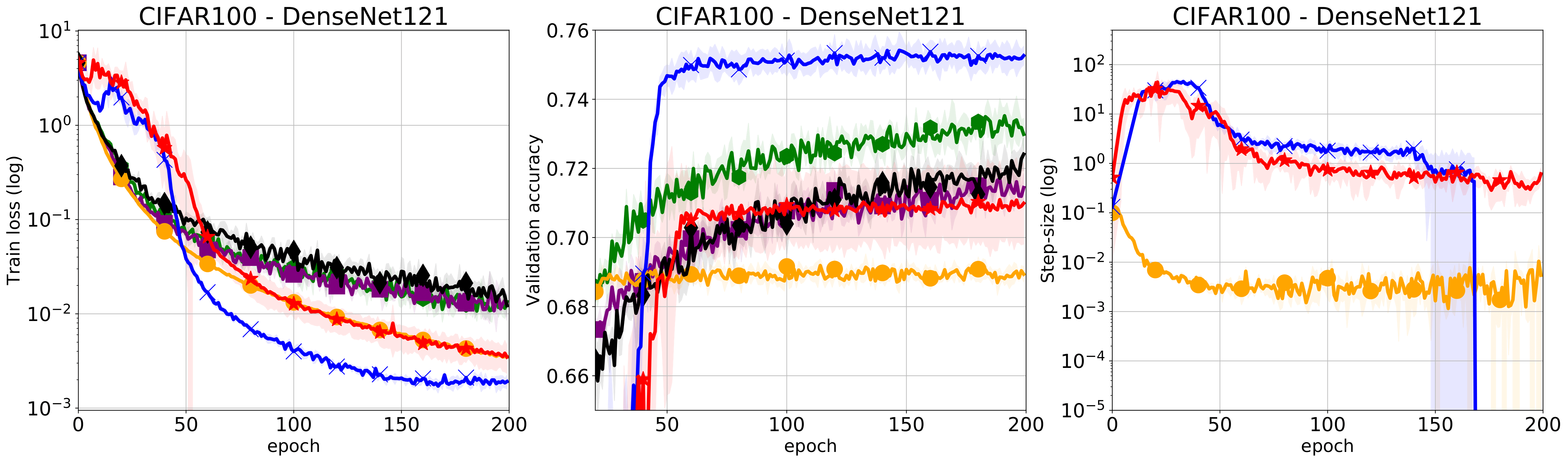}
    \includegraphics[width = \textwidth]{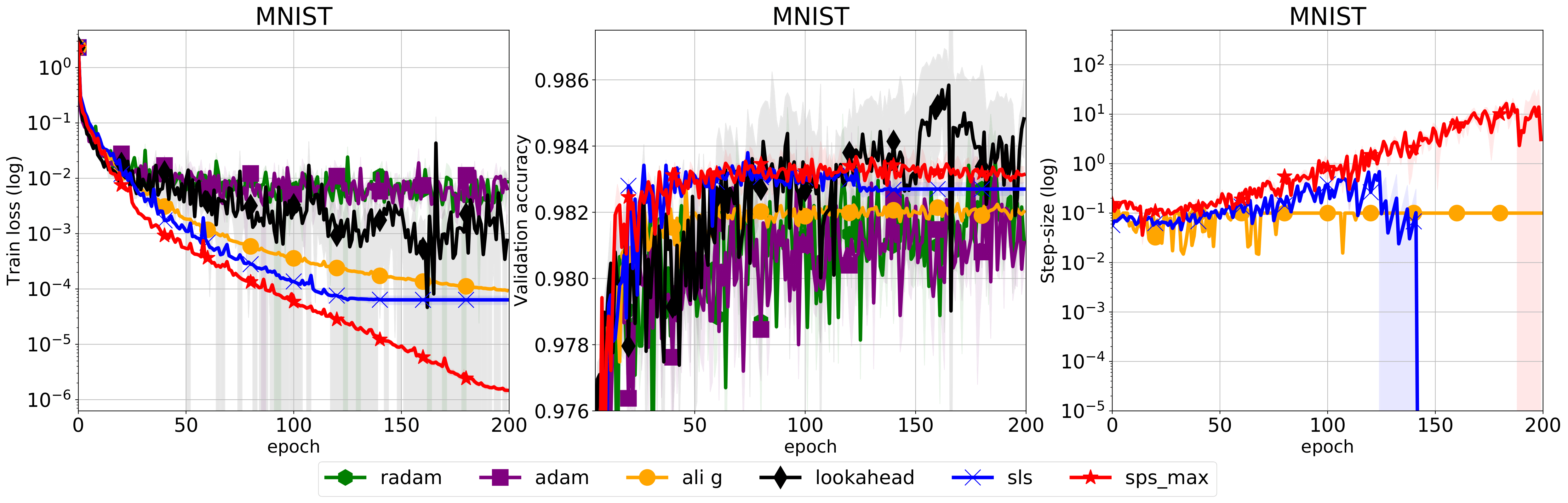}
    \caption{Further experiments on Multi-class classification using deep networks. Setting: CIFAR10-DenseNet121, CIFAR100-DenseNet121 and MNIST.}
\end{figure*}

\end{document}